\newtheorem{T}{Theorem}[section]
\newtheorem{Defi}[T]{Definition}
\newtheorem{R}[T]{Remark}
\newtheorem{Pro}[T]{Proposition}
\newtheorem{C}[T]{Corollary}
\newtheorem{Le}[T]{Lemma}
\newcommand{\ds}{\displaystyle}
\newcommand{\B}{\mathbb{B}}
\newcommand{\A}{\mathbb{A}}
\newcommand{\re}{\mathbb{R}}
\newcommand{\cR}{\mathcal{R}}
\newcommand{\cS}{\mathcal{S}}
\newcommand{\cL}{\mathbb{L}}
\newcommand{\cM}{\mathbb{M}}
\title{\textbf{On the Lagrange and Markov
Dynamical Spectra for Geodesic Flows in Surfaces
with Negative Curvature}}
\author{ Carlos Gustavo T. de A. Moreira\footnote{\text{C. G. Moreira is partially supported by CNPq,  the Palis Balzan Prize and FAPERJ.}}\\
Sergio Augusto Roma\~na Ibarra \footnote{S. Roma\~na is partially supported by CNPq, Capes, the Palis Balzan Prize and FAPERJ.} }
\date{}
\begin{document}
\maketitle

\begin{abstract}
\noindent We consider the Lagrange and the Markov dynamical spectra associated with the geodesic flow on  surfaces of negative curvature. We show that for a large set of real functions on the unit tangent bundle and  typical metrics with  negative curvature and finite volume, both the Lagrange and the Markov dynamical spectra have non-empty interiors.
\end{abstract}

\tableofcontents
 \section{Introduction}
The classical {\it Lagrange spectrum} from the theory of Diophantine approximations can be defined as follows:
We associate, to each irrational number $\alpha$, its best constant of approximation (Lagrange value of $\alpha$), given by \begin{eqnarray*}k(\alpha)&=&\sup\left\{k>0:\left|\alpha-\frac{p}{q}\right|<\frac{1}{kq^2} \ \text{has infinite rational solutions $\frac{p}{q}$ }\right\}\\
&=&\limsup_{ \stackrel{p,q\to\infty}{p,q\in \mathbb N}}\left|q(q\alpha-p)\right|^{-1}\in \re\cup\{+\infty\}.
\end{eqnarray*}
The Lagrange spectrum is the set $$\cL=\{k(\alpha):\alpha\in \re\setminus \mathbb{Q} \ \text{and} \ k(\alpha)<\infty\}.$$
\noindent Another interesting set related to Diophantine approximations is the classical \textit{Markov spectrum} defined by
\begin{equation}\label{SMClassic}
\cM=\left\{\left(\inf_{(x,y)\in \mathbb{Z}^{2}\setminus(0,0)}|f(x,y)|\right)^{-1}:f(x,y)=ax^2+bxy+cy^2 \ \text{with} \ b^2-4ac=1\right\}.\end{equation}
\noindent Perron (see, for instance, \cite{CF}) gave a dynamical characterization of the Lagrange and Markov spectra, as follows:\\

\noindent Let $\Sigma=({\mathbb{N}^*})^{\mathbb{Z}}$ and $\sigma\colon \Sigma \to \Sigma$ the shift defined by $\sigma((a_n)_{n\in\mathbb{Z}})=(a_{n+1})_{n\in\mathbb{Z}}$. If $f\colon \Sigma \to \re$ is defined by $f((a_n)_{n\in\mathbb{Z}})=\alpha_{0}+\beta_{0}=[a_0,a_1,\dots]+[0,a_{-1},a_{-2},\dots]$, then 
$$\cL=\left\{\limsup_{n\to\infty}f(\sigma^{n}(\underline{\theta})):\underline{\theta}\in \Sigma\right\}$$
and  
$$\cM=\left\{\sup_{n\in\mathbb{Z}}f(\sigma^{n}(\underline{\theta})):\underline{\theta}\in \Sigma\right\}.$$
As a consequence of this characterization it follows that $\cL$ and $\cM$ are closed sets of real numbers and $\cL\subset \cM$.

In 1947, M. Hall \cite{Hall} proved that $\cL$ (and thus also $\cM$) contains a whole half-line (for instance $[6,+\infty)$. 

In 1975, G. Freiman (cf. \cite{F} and \cite{CF}) determined the precise beginning of Hall's ray (the biggest half-line contained in $\cL$, which turns out to be also the biggest half-line contained in $\cM$), which is 
$$
\frac{2221564096 + 283748\sqrt{462}}{491993569} \cong 4,52782956616\dots\,.
$$ 
Both Hall's and Freiman's results are directly related to the study of arithmetic sums of regular Cantor sets.

Perron characterization of the classical Markov and Lagrange spectra inspired a more general notion of {\it dynamical} Markov and Lagrange spectra (see for instance \cite{MR}):

Let $M$ be a smooth manifold, $T=\mathbb{Z}$ or $\mathbb{R}$, and $\phi=(\phi^t)_{t\in T}$ be a discrete-time ($T=\mathbb{Z}$) or 
continuous-time ($T=\mathbb{R}$) smooth dynamical system on $M$, that is, $\phi^t:M\to M$ {is a}  smooth 
diffeomorphism, {where} $\phi^0=\textrm{id}$,
and $\phi^t\circ\phi^s=\phi^{t+s}$ for all $t,s\in T$. 
Given a compact invariant subset $\Lambda\subset M$ and a function $f:M\to\mathbb{R}$, we define the 
\emph{dynamical Markov {spectrum}}, {denoted by} $\cM(\phi, \Lambda, f)$,
\emph{Lagrange spectrum},  {denoted by}  
$\cL(\phi, \Lambda, f)$  as 
$$\cM(\phi, \Lambda, f)=\{m_{\phi, f}(x): x\in\Lambda\}, \quad
 {(}\cL(\phi, \Lambda, f)=\{\ell_{\phi, f}(x): x\in\Lambda\}{)}$$
where  
$$m_{\phi, f}(x):=\sup\limits_{t\in \re} f(\phi^t(x)), \quad {(}\ell_{\phi, f}(x):=\limsup\limits_{t\to+\infty} f(\phi^t(x)){)}.$$
It is not difficult to show that $\cL(\phi, \Lambda, f)\subset \cM(\phi, \Lambda, f)$  \cite{MR}.\\
\ \\
When $\Lambda$ is the whole manifold, we denote 
$\cM(\phi, M, f)=\cM(\phi, f)$ and\break $\cL(\phi, M, f)=\cL(\phi, f)$.\\

A class of dynamical spectra which is closely related to the classical ones is given by spectra associated to geodesic flows on surfaces of constant negative curvature. In 1986, A. Haas \cite{Haas} proved the existence of an analogous to Hall's ray (a half-line contained in the dynamical spectra) for the geodesic flow on the quotient of $\mathbb{H}^{2}$ by a Fuchsian group of $SL(2,\re)$ and  A. Haas and C. Series  for Hecke group  \cite{HS}, this last related with Lagrange spectra and cusp excursions in modular surface (cf. \cite{OurBook}). In 1997, Thomas A. Schmidt and Mark Sheigorn \cite{SS} proved that Riemann surfaces have  Hall's ray in every cusp. Recently, in \cite{AMU2} it was proved that Hall's rays are persistent on each cusp of a Riemann surface (so in constant negative curvature).

The existence of Hall's rays for dynamical spectra was also proved in other contexts. In 2015, P. Hubert, L. Marchese, and C. Ulcigrai \cite{HMU} showed the existence of Hall's ray in the context of Teichm\" uller dynamics, more precisely for moduli surfaces, using renormalization. In 2016, M. Artigiani, L. Marchese, C. Ulcigrai \cite{AMU} showed that Veech surfaces also have a Hall's ray. In the context of non-archimedean quadratic Lagrange spectra, some new results is this direction can be found in \cite{PP2} and \cite{Bugeaud}\\

\indent For manifolds of finite volume, variable negative curvature and dimension greater than or equal to $3$, in \cite{PP} it was proved the existence of Hall's ray  for the \emph{height function} associated with an \emph{end} of the manifold (see the beginning of Section \ref{SHF} for more details of the \emph{height function}). In the same paper \cite[page 278]{PP},  J. Parkkonen and F. Paulin expected that the existence of Hall's ray could be false for surfaces of variable negative curvature. \\
\noindent This paper is inspired by the question: \\
\ \\
\textbf{Question:}
Is there Hall's ray for surfaces of variable negative curvature and finite volume?\\
\indent In this work, we show a positive result in the direction of this question: we prove that these spectra have typically non-empty interiors for surfaces of variable negative curvature and finite volume.

The continuity of the Hausdorff dimension of the intersection of the spectra with the half-line $(-\infty, t)$ was studied at \cite{CMR} for geodesic flows on surfaces of negative curvature, generalizing results of \cite{M} and \cite{CMM}.  

We refer to the book \cite{CF} for results on the classical spectra and to the book \cite{OurBook} for more recent results on classical and dynamical spectra.\\

In \cite{R}, the second author studied Markov and Lagrange dynamical spectra for Anosov flows on manifolds of dimension $3$. It has been proved that for suitable small perturbations of the flow the dynamical Lagrange and Markov spectra have persistently non-empty interior for typical real functions. Although surfaces of pinched negative curvature have geodesic flows of the Anosov type, the perturbations constructed in \cite{R} are not necessarily perturbations by geodesic flows. Indeed, even a small local perturbation of a Riemannian metric in a neighborhood of a point affects the corresponding geodesic flow in the whole fibers of the unit tangent bundle over this neighborhood. Thus, in this paper, we study the Lagrange and Markov spectra for the geodesic flow of surfaces of pinched negative curvature. More specifically,  we pursue two goals. The first goal is to extend the result {in} \cite{R} in the context of geodesic flows (see Theorem \ref{T1} and Theorem \ref{T2} for details), 
which requires sophisticated perturbation techniques of Riemannian metrics.
The second goal is to obtain a version of Theorem \ref{T2} for the 
\emph{height function}, associated with an \emph{end} of the manifold (see Theorem \ref{T3}). 

\subsection{Statement of the Main Results}
Let  $M$ be a complete surface, and let $g_0$ be a smooth ($C^r$, $r\geq 3$) pinched negatively curved riemannian metric on $M$, 
\emph{i.e}., the curvature is bounded above and  below by two negative constants and  finite volume.  Let $\phi=(\phi^t_{g_{_{0}}})_{t\in\mathbb{R}}$ be
the geodesic flow on the unit tangent bundle $S^{g_{_{0}}}M$ of $M$ with respect to $g_0$. We denote {\boldmath$\mathcal{G}$}$^{3}(M)$ the space of $C^{3}$ riemannian metric on 
$M$. For a metric $g\in${\boldmath$\mathcal{G}$}$^{3}(M)$  we denote {by} $S^{g}M$ the unit tangent bundle of the metric $g$ and $\phi_g$ is the
vector field  derivative of the geodesic flow $(\phi^t_{g})_{t\in \re}$ of the metric $g$.\\
\ \\
In the sequel, given $A\subset \re$, $\text{int}\, A$ denotes the interior of $A$.\\
\noindent  Our first result in this context is:
\begin{T}\label{T1}Arbitrarily close to  $g_{_{0}}$ there is an open set $\mathcal{G}\subset${\boldmath$\mathcal{G}$}$^{3}(M)$  such that for any $g\in \mathcal{G}$, there exists  a dense and $C^{2}$-open subset $\mathcal{H}_g\subset C^{2}(S^{g}M,\re)$  so that 
$$\emph{int}\, \cM(\phi_{g},f)\neq \emptyset \ \ \text{and} \ \ \emph{int}\, \cL(\phi_{g},f)\neq \emptyset,$$
whenever $f\in \mathcal{H}_g$.\\
Moreover, the above statement holds persistently: for any $\tilde{g}\in \mathcal{G}$, it holds for any $(f,g)$ in a suitable neighborhood of
$\mathcal{H}_{\tilde{g}}\times \{\tilde{g}\}$ in $C^{2}({M,\re})\times${\boldmath$\mathcal{G}$}$^{3}(M)$. 

\end{T}


The next result is a version of Theorem \ref{T1} for the restricted case, where the set of functions is a composition of functions on the 
manifold $M$ with the canonical projection.
\begin{T}\label{T2}Arbitrarily close to  $g_{_{0}}$ there is an open set 
$\mathcal{G}\subset${\boldmath$\mathcal{G}$}$^{3}(M)$
 such that for any
$g\in \mathcal{G}$, there exists  a dense and $C^{2}$-open subset $\widetilde{\mathcal{H}}_g\subset C^{2}(M,\re)$  so that 
$$\emph{int}\, \cM(\phi_{g},f\circ \pi)\neq \emptyset \ \ \text{and} \ \ \emph{int}\, \cL(\phi_{g},f\circ \pi)\neq \emptyset,$$
whenever $f\in \widetilde{\mathcal{H}}_g$.
Here  $\pi_{g}\colon S^{g}M\to M$ is the canonical projection. 
Moreover, the above statement holds persistently: for any $\tilde{g}\in \mathcal{G}$, it {also} holds for any $(f,g)$ in a suitable neighborhood of $\widetilde{\mathcal{H}}_{\tilde{g}}\times \{\tilde{g}\}$ in $C^{2}({M,\re})\times${\boldmath$\mathcal{G}$}$^{3}(M)$. 
\end{T}
The final result {presented}  in this {paper}  states that 
for small perturbations of the metric, the Lagrange and Markov spectra  defined in \cite{PP} contain intervals. 
More precisely, {we can state this as the following theorem}.
\begin{T}\label{T3} Let $M$ be a complete and non-compact surface. Let $e$ be an \textit{end} of $M$. Then  arbitrarily close to $g$ there 
exists an open set $\mathcal{G}\subset${\boldmath$\mathcal{G}$}$^{3}(M)$  such that for any $\tilde{g}\in \mathcal{G}$,  
$$\emph{int}\, \cM(\phi_{\tilde{g}}, h^{\tilde{g}}_{e}\circ \pi)\neq \emptyset \ \ \text{and} \ \ \emph{int}\, \cL(\phi_{\tilde{g}},h^{\tilde{g}}_{e}\circ \pi)\neq \emptyset,$$
where $h^{\tilde{g}}_{e}$ is the height function associated to end $e$ with the metric $\tilde{g}$.
\end{T}

\noindent The problem of finding intervals in the classical Lagrange and Markov spectra is closely related to the study of the fractal geometry of regular Cantor sets related to the Gauss map. However, the results about the existence of Hall's ray cited above  do not involve fractal geometry or the study of re\-gu\-lar Cantor sets. \\
In the present study of {the} two-dimensional spectra, recent results on fractal geometry of re\-gu\-lar Cantor sets are (again) a key ingredient in the proof of our results about dynamical Lagrange and Markov spectra 
associated with geodesic flows on surfaces of negative curvature. We use and adapt in this work techniques from \cite{MY}, \cite{MY1}, \cite{MR}, and \cite{R}.

\indent The previous theorems are far from being direct consequences of \cite{MR} and \cite{R}. In fact,
the proofs of the previous results depend strongly on local perturbations of diffeomorphisms or vector fields. In the present situation, we need to perform perturbations of the Riemannian
metrics, which produce necessarily global modifications of the geodesic field - we need to develop delicate combinatorial and geometrical techniques in order to emulate local perturbations of the geodesic flow near hyperbolic sets by perturbing the Riemannian metrics.

\indent Indeed, as it is well known, perturbation results within the set of Riemannian metrics are usually hard, basically because when we change the metric in a neighborhood of a point of
the manifold we affect all the geodesics intersecting this neighborhood; in other words, even if the size of our neighborhood in the manifold is small, the effect of the perturbation in the unit tangent bundle could be large. However, the main difficulty in this part of the work is to produce, just by performing small perturbations on the metrics, a sufficiently rich family of perturbations (with ``enough independence") of a horseshoe invariant by a Poincar\'e map associated with the geodesic flow in order to produce, via the probabilistic method of \cite{MY1}, stable intersections of the stable Cantor set of this horseshoe and the unstable Cantor set of another invariant horseshoe {(see Subsections \ref{PropV} and \ref{RCS} for the definition of regular Cantor sets and the stable intersection of two Regular Cantor sets)}. We develop new techniques of Fractal Geometry of combinatorial nature  to address this problem. 

The paper is organized as follows: In Section \ref{Prel}, we recall some classical results of hyperbolic dynamics
which are relevant to {our}  work. In Section \ref{HD>1}, we use a
construction {from} 
\cite[Lemma 4.10, and Corollary 5]{R} to get a basic set for the geodesic flow with Hausdorff dimension  close enough to $3$. Using this
basic set, we construct a finite number of (disjoint) cross sections to the geodesic flow, and we show that the Poincar\'e 
(first return) map of
the union of sections has a basic set - a horseshoe - with Hausdorff dimension close enough to $2$ (see \cite[Section 3]{R}). The last
arguments allow us to reduce Theorem \ref{T1} and Theorem \ref{T2} to a bi-dimensional version (see Theorem
\ref{Theorem 1} and Theorem 
\ref{Theorem 2}). In Section \ref{sec MLSGF}, we construct explicitly
the set $\mathcal{H}_{g}$ and $\widetilde{\mathcal{H}}_{g}$, which has a non-trivial construction. In Section \ref{Family of Pert}, we develop techniques of perturbations of Riemannian metrics together with further combinatorial techniques in order to adapt constructions of \cite{MY1} in the context of our work, which  {most}  important and {hardest} part of {this} paper.  In  Section \ref{P. MT},
using the results of Section \ref{HD>1}, \cite{MR}, \cite{MY}, \cite{MY1} and some combinatorial techniques develop in \cite[Section 4.3.2]{R},
we prove Theorem \ref{Theorem 1} and Theorem \ref{Theorem 2}. Finally,
in Section \ref{SHF}, we prove Theorem \ref{T3}. \\


\section{Preliminaries}\label{Prel}
Let $N$ be a complete Riemannian manifold and $\phi^t:N \rightarrow N$ a $C^{r}$-flow on $N$. We say that an invariant set $\Lambda\subset N$ is \emph{hyperbolic} for $\phi^{t}$ if: there exists a  continuous splitting $T_{\Lambda}N=E^{s}\oplus \phi\oplus E^{u}$ such that  for each $\theta\in \Lambda$
\begin{eqnarray*}
	d\phi^t_{\theta} (E^s(\theta)) &=& E^s(\phi^t(\theta)),\\
	d\phi^t_{\theta} (E^u(\theta)) &=& E^u(\phi^t(\theta)),\\
	||D\phi^t_{\theta}\big{|}_{E^s}|| &\leq& C \lambda^{t},\\
	||D\phi^{-t}_{\theta}\big{|}_{E^u}|| &\leq& C \lambda^{t},\\
	\end{eqnarray*}
for all $t\geq 0$ with $ C > 0$ and $0 < \lambda <1$, where  $\phi$ is the vector field derivative of the geodesic vector flow.\\
The bundles $E^{s}$ and $E^{u}$ are called stable and unstable bundles, respectively. They are known to be uniquely integrable (see \cite{K}).\\ A hyperbolic set $\Lambda$ is called a \emph{basic set} if satisfies the following three properties:
\begin{enumerate}
\item[(a)] the set of periodic orbits contained in $\Lambda$ are dense in $\Lambda$,
\item[(b)] $\phi^t$ is transitive,
\item[(c)] There is an open set $U\supset \Lambda$ so that $\Lambda=\bigcap_{t\in \re}\phi^t(U)$.
\end{enumerate}
Basic sets play an important role in this paper because they have well-understood  fractal geometry. For diffeomorphisms on a surface, no trivial basic sets are also called \emph{horseshoe}.
When the entire manifold $N$ is a hyperbolic set, then we said that the flow is an Anosov flow.  The central examples of Anosov flows, which will be treated in this work, are provided by geodesic flows. 
\subsection{Geodesic Flow}\label{GF}

Given a Riemannian manifold $M$, denoted by $TM$ the tangent bundle and
$SM=\{(x,v)\in TM:\Vert v\Vert=1\}$ the unit tangent bundle of $M$. 
For $\theta=(p,v)\in SM$, we denoted by $\gamma_{_{\theta}}(t)$ the unique geodesic with initial conditions $\gamma_{_{\theta}}(0)=p$ and $\gamma_{_{\theta}}'(0)=v$. The family of diffeomorphisms  $\phi^t:SM \to SM$  given by  $\phi^t(\theta)=(\gamma_{_{\theta}}(t),\gamma_{_{\theta}}'(t))$ and satisfying $\phi^{t+s}=\phi^{t}\circ \phi^{s}$ for all $t,s\in \mathbb{R}$ is called  \emph{geodesic flow}. We endowed $SM$ with the \emph{Sakaki metric} (see Subsection \ref{Sasaki}).\\
A classic result due to D. Anosov (cf. \cite{A}, \cite{K1} and \cite{K}) states that  complete manifolds of curvature bounded between two negative constants (pinched negative curvature) have Anosov geodesic flow. Moreover, in this condition, if $M$ has finite volume, then the
non-wandering set of the geodesic flow  is equal to $SM$, and $\phi^t$ is transitive. (cf.  \cite{P} and \cite[chapter 3]{K1}).\\
For the purposes of this paper, from now on, we consider $(M,g_{_{0}})$ a complete $C^r$, {$r\geq 3$}, a  Riemannian surface with pinched negative curvature,  $SM$ the unit tangent bundle, which has dimension $3$, and whose geodesic flow ${\phi}^{t}=(\phi^{t}_{g_{_{0}}})_{_{t\in\re}}$ is Anosov.  


\section{Hyperbolic Set for Two-dimensional Dynamic}\label{HD>1}

The goal of this section is to reduce  Theorem \ref{T1} and Theorem \ref{T2} to  two-dimensional versions (see Theorem  \ref{Theorem 1} and Theorem \ref{Theorem 2}).

 
\subsection{Dimension Reduction via Poincar\'e Maps}\label{Dim. Red}
The geodesic flow of a complete surface of pinched negative curvature and finite volume carries many basic sets. In \cite{R}  was proved that, in this case, the geodesic flow has a basic set with Hausdorff dimension close to $3$.
  
\begin{Le}\label{L1-HD>1}\emph{\cite[Corollary 5]{R}}
There is a basic set $\Lambda$ for $\phi^{t}$ with Hausdorff dimension arbitrarily  close to $3$.
\end{Le}
To reduce the dynamic of $\phi^t$ to two dimensional dynamical, we need the concept of \emph{Good Cross-Section} introduced at \cite[Section 3]{R}.
\begin{Defi}\label{D1GCS} Let $\Lambda$ be a {compact} subset of $M$.\\
We say that a compact cross-section $S$ is a \emph{Good Cross-Section} $($or simply \emph{GCS}$)$ for $\Lambda$ if 
\begin{center}
$d(\Lambda \cap S, \partial\,S)>0$
\end{center}
where $d$ is the intrinsic distance in $S$.\\
If $S$ is a \emph{GCS} and $x\in \Lambda\cap S$, we say that $S$ is a GCS at $x$.
\end{Defi}
The transitivity of the geodesic flow allows us to construct GCS's for any points of a hyperbolic set (see \cite[Section 3]{R} for more details). Furthermore, its specific constructions allow to prove the following lemma, which allows reducing the dynamics of $\Lambda$ to a Poincar\'e map (see also \cite[Remark 7]{R}).

\begin{Le}\emph{\cite[Lemma 3.5]{R}}\label{L7}
There are $\gamma>0$ and smooth-GCS, ${\Sigma}_{i}$, $i=1,\dots, k$ such that 

\begin{equation}\label{eq6}
\ds\Lambda\subset \bigcup^{k}_{i=1}\phi^{(-\gamma,\gamma)}(\emph{int}\,{\Sigma}_i)
\end{equation}
with ${\Sigma}_i\cap {\Sigma}_j=\emptyset$.
\end{Le}

As was exploited in \cite[Section 3.2]{R}, we describe the dynamics of $\phi^t$ on $\Lambda$  in terms of Poincar\'e maps. 
More precisely, the GCS $\Sigma_i$, $1\leq i\leq k$, of Lemma \ref{L7} was constructed such that 
the $\phi^t$-orbit of any point of $\Lambda$ intersects $\Sigma:=\bigsqcup\limits_{i=1}^k \Sigma_i$, the subset $\Delta:=\Lambda\cap\Sigma$ is
disjoint from the boundary $\partial\Sigma:=\bigsqcup\limits_{i=1}^k\partial\Sigma_i$. Consider the Poincar\'e (first return) map $\mathcal{R}:D_{\mathcal{R}}\to \Sigma$ from a neighborhood $D_{\mathcal{R}}\subset\Sigma$ of $\Delta$ to $\Sigma$ sending $y\in D_{\mathcal{R}}$ to the point  $\mathcal{R}(y) = \phi^{t_+(y)}_{g_{_{0}}}(y)$ where the forward $\phi^t$-orbit of $y$ first hits $\Sigma$, and $\cR^{-1}(y)= \phi_{g_0}^{t_{-}(y)}(y)$, whenever $\cR^{-1}(y)$ is defined. Since $\Lambda$ is a basic set for $\phi^t$, it is not difficult to prove that
\begin{Le}\label{L7.1}
The set $\Delta$ is a basic set for $\mathcal{R}$.
\end{Le}

The splitting $E^{s}\oplus \phi\oplus E^{u}$ over a neighborhood of $\Lambda$ defines a continuous splitting $E^{s}_{\Sigma}\oplus E^{u}_{\Sigma}$ of the tangent bundle $T\Sigma$ given by 
\begin{eqnarray*}
E^{s}_{\Sigma}(y)=E^{cs}_{y}\cap T_{y}\Sigma \ \text{and} \ E^{u}_{\Sigma}(y)=E^{cu}_{y}\cap T_{y}\Sigma,
\end{eqnarray*}
where $E_{y}^{cs}=E^{s}_y\oplus  \text{span}\,{\phi(y)}$ and $E_{y}^{cu}=E^{u}_y\oplus \text{span}\,{\phi(y)}$. Denote by $W^{s}_{\cR}(x)$ the stable manifold and $W^{u}_{\cR}(x)$ the unstable manifold of $x\in \Delta$ (see Appendix \ref{HF}).

The relation between the Hausdorff dimension of $\Delta$ and $\Lambda$ is described by the following lemma (compare with Lemma 3.8 
{in} \cite{R}): 

\begin{Le}\emph{\cite[Lemma 3.8]{R}}\label{l.dimK} In the previous setting, one has $\textrm{HD}(\Lambda)=\textrm{HD}(\Delta)+1$. In particular, $HD(\Delta)>1$.
\end{Le}
Let $\mathcal{V}\subset$ {\boldmath$\mathcal{G}$}$^{3}(M)$ a neighborhood of $g_{0}$ such that for every $g\in \mathcal{V}$, the hyperbolic set $\Lambda$ has a hyperbolic continuation $\Lambda_{g}$ for $\phi_g=(\phi^{t}_{{g}})_{t\in \re}$, the geodesic flow  of the metric $g$, and $HD(\Lambda_{g})>2$. 

Given $g, g_{_{1}} \in \mathcal{V}$, we define the diffeomorphism $\mathcal{S}^{g}_{g_{_{1}}}\colon S^{g}M \to S^{g_{_{1}}}M$ by $$\mathcal{S}^{g}_{g_{_{1}}}(x,v)=\bigg(x, \frac{v}{||v||_{g_{_{1}}}}\bigg),$$ where $S^{{g}}M$ and $S^{g_{_{1}}}M$ are 
the unit tangent bundle associated to metric ${g}$ and $g_{_{1}}$, respectively. It is easy to check $(\mathcal{S}^{g}_{g_{_{1}}})^{-1}=\mathcal{S}^{g_{_{1}}}_{g}$. In order not to saturate the notation, whenever $g_{_{1}}=g_{_{0}}$, we write $\mathcal{S}^{g}_{g_{_{0}}}:=\mathcal{S}^{g}$ and define the transformation $\mathcal{T}^{g}$ as the inverse of $\mathcal{S}^{g}_{g_{_{0}}}$, \emph{i.e.}, and $\mathcal{T}^{g}:= \mathcal{S}^{g_{_{0}}}_{g}$. Put   $\Sigma_{g}:=\mathcal{T}_{g}(\Sigma)$ and $\cR^{g}\colon \Sigma_{g} \to \Sigma_{g}$ the Poincar\'e map of $\phi^t_{g}$.\\
The diffeomorphism $\mathcal{S}^{g}_{g_{_{1}}}$ is $C^2$. Moreover, if $\mathcal{V}$ is small enough, then $\mathcal{S}^{g}_{g_{_{1}}}$ is $C^2$-close to the identity, in the following sense: Let
$(x,v)\in S^{{g}}M$ and $\psi_{{g}}\colon U\subset \mathbb{R}^{3} \to S^{{g}}M$ a chart of $S^{{g}}M$, with
$\psi_{{g}}(0)=(x,v)$ and $\psi_{g_{_{1}}} \colon V\subset \mathbb{R}^{3} \to S^{g_{_{1}}}M$ a chart of $S^{g_{_{1}}}M$ such that $\psi_{g_{_{1}}}(0)=\cS^{g}_{{g_{_{1}}}}(x,v)$, then 
$\psi^{-1}_{g}\circ \cS^{g}_{{g_{_{1}}}} \circ \psi_{{g}} \colon U \to V$ is $C^{2}$-diffeomorphism close to the identity of $\mathbb{R}^3$.

Note also that, $J_{g_{_{1}}}^{g}(t,\cdot)= \cS^{g}_{g_{_{1}}}\circ \phi^t_{g}\circ \cS^{g_{_{1}}}_{g}(\cdot)$ defines a flow on $S^{g_{_{1}}}M$. Since $g$ and $g_1$ are $C^3$-close, then $J_{g_{_{1}}}^{g}(t,\cdot)$ is $C^2$-close to $\phi^{t}_{g_{_{1}}}$, and therefore, the Poincar\'e map $\cR^{J_{g_{_{1}}}^{g}}\colon \Sigma_{g_{_{1}}}\to \Sigma_{g_{_{1}}}$, associated to $J_{g_{_{1}}}^{g}$, is $C^2$-close to $\cR^{g_{_{1}}}$.\\
If $g_{_{0}}=g_{_{1}}$, then $\cR^{{J^{^{g}}_{g_{_{0}}}}}\colon \Sigma\to \Sigma$ is $C^2$-close to $\cR$ and we denoted $\widetilde{\Delta}_g$ the hyperbolic continuation of $\Delta$ by $\cR^{{J^{^{g}}_{g_{_{0}}}}}$, in fact, a basic set.
\begin{Le}\label{lemma_conjugate}
The diffeomorphisms $\cR^{{J^{^{g}}_{g_{_{0}}}}}$ and $\cR^{{g}}$ are conjugate, \emph{i.e.}, 
$$\mathcal{T}^{g}\circ \cR^{{J^{^{g}}_{g_{_{0}}}}}= \cR^{g}\circ \mathcal{T}^{g}$$
\end{Le}
\begin{proof}
Simply we observe that $\cS^{g}\circ \phi_g^{t}(x,v)= J_{g_{_{0}}}^{g}(t, \cS^{g}(x,v))$ and therefore $$\phi^t_{g}\circ \mathcal{T}^g= \mathcal{T}^{g}\circ J_{g_{_{0}}}^{g}(t, \cdot).$$
\end{proof}
The last lemma implies that the set $\Delta_g:=\mathcal{T}^{g}(\widetilde{\Delta}_{g})$ is a basic set for $\cR^g$. Without loss of generality, from now on, we call $\Delta_g$ \emph{the hyperbolic continuation} of $\Delta$ by $\cR^g$.
If $\mathcal{V}$ is small enough, then from the continuity of the Hausdorff dimension for basic sets (cf. \cite{PT}), we have that $HD(\Delta_g)>1$.  Also, 
given a basic set $\Theta$ for $\cR$ we denote $\Theta_g$ the hyperbolic continuation of $\Theta$ by $\mathcal{R}^{g}$. \\
\begin{R}\label{R1-conjugates}
\indent As mentioned before, if $g, g_1\in \mathcal{V}$, then  $\cR^{J_{g_{_{1}}}^{^{g}}}\colon \Sigma_{g_{_{1}}}\to \Sigma_{g_{_{1}}}$ is $C^2$-close to $\cR^{g_{_{1}}}$, then we call $\widetilde{\Delta}_{g_1,g}$ the hyperbolic continuation of $\Delta_{g_{1}}$ by $\cR^{J_{g_{_{1}}}^{^{g}}}$. Analogously, as \emph{Lemma \ref{lemma_conjugate}}, we have that $\cR^{J_{g_{_{1}}}^{^{g}}}$ and $\cR^g$ are conjugate and 
\begin{equation}\label{eq1-conjugates}
\mathcal{S}^{g_{_{1}}}_{g}\circ \cR^{{J^{^{g}}_{g_{_{0}}}}}= \cR^{g}\circ \mathcal{S}^{g_{_{1}}}_{g},
\end{equation}
which implies that $\Delta_{g_{_{1}},g}:= \mathcal{S}^{g_{_{1}}}_{g}(\widetilde{\Delta}_{g_{_{1}},g})$ is a basic set for $\cR^g$ with $HD(\Delta_{g_{_{1}},g})>1$. 
\end{R}

From now on, we only consider the  metrics on $\mathcal{V}$.

\subsubsection{The dynamical Lagrange and Markov spectra of $\Lambda$ and $\Delta$}

\indent The dynamical Lagrange and Markov spectra of $\Lambda$ and $\Delta$ are related in the following way.
Given a function $F\in C^{s}(SM,\mathbb{R})$, $s\geq 1$, let us denote by $f =\max F_{\phi}\colon D_{\cR}\to\re$ the function
$$\text{max} F_{\phi}(x):=\max_{0\leq t \leq t_{+}(x)}F(\phi^{t}(x)).$$
\begin{R}
{We have that} $f =\max F_{\phi}$ might not be $C^1$ in general. 
\end{R}
{We see that}
$$\limsup_{n\to +\infty}f(\cR^n(x))=\limsup_{t\to + \infty}F(\phi^t(x))$$
and 
$$\sup_{n\in \mathbb{Z}}f(\cR^n(x))=\sup_{t\in \re}F(\phi^t(x))$$
for all $x\in \Delta$. In particular,

\begin{equation}\label{Continuos and Discrete}
\cL(\phi, \Lambda, F)=\cL(\cR, \Delta, f) \ \ \text{and} \ \ \cM(\phi, \Lambda, F)=\cM(\cR, \Delta, f).
\end{equation}
Thus, the relation (\ref{Continuos and Discrete})  reduces Theorem \ref{T1}, and Theorem \ref{T2} to the following theorems:
\begin{T}\label{Theorem 1}In the setting of Theorem $\ref{T1}$, arbitrarily close to $g_{_{0}}$ there is an open set $\mathcal{G}\subset${\boldmath$\mathcal{G}$}$^{3}(M)$  such that for any $g\in \mathcal{G}$ one can find a dense and open $C^{2}$-open subset  $\mathcal{H}_{g, \Lambda}\subset C^{2}(S^gM,\re)$, so that 
$$\emph{int}\, \cM(\cR^{g},\Delta_{g}, \max F_{\phi_g})\neq \emptyset \, \ \ \  \, \text{and}\, \ \ \  \, \emph{int}\, \cL(\cR^{g},\Delta_{\phi_g}, \max F_{\phi_g})\neq \emptyset$$
whenever $F\in {\mathcal{H}_{g,\Lambda}}$.\\
Here $\Delta_{g}$ denoted the hyperbolic continuation of  $\Delta$ by the Poincar\'e map $\cR^{g}$.
\end{T}

\begin{T}\label{Theorem 2}In the setting of \emph{Theorem $\ref{T2}$}, arbitrarily close to $g_{_{0}}$ there is an open set 
$\mathcal{G}\subset${\boldmath$\mathcal{G}$}$^{3}(M)$ such that for any $g\in \mathcal{G}$ one can find a dense and open $C^{2}$-open subset 
$\widetilde{\mathcal{H}}_{g, \Lambda}\subset C^{2}(M,\re)$, so that 
$$\emph{int}\, \cM(\cR^{g},\Delta_{g}, \max (f\circ\pi)_{\phi_g})\neq \emptyset \,\ \   \text{and}\,\ \   \,  \, \emph{int}\, \cL(\cR^{g},\Delta_{g}, \max (f\circ\pi)_{\phi_g})\neq \emptyset$$
whenever $f\in {\widetilde{\mathcal{H}}_{g,\Lambda}}$. 
Here $\Delta_{g}$ denotes the hyperbolic continuation of  $\Delta$ by the Poincar\'e map $\cR^{g}$.
\end{T}
The following sections focus on doing the proofs of the above theorems.
\section{Construction of the Typical Functions}\label{sec MLSGF}
 

In this section, we construct explicitly the section of real functions that work for  Theorem \ref{Theorem 1} and Theorem \ref{Theorem 2}.


\subsection{Construction of Typical Functions for Theorem \ref{T1}}\label{Description of function H_1}
The construction of the set of functions {$\mathcal{H}_{g, \Lambda}$} of Theorem \ref{T1} will be similar to the construction given in  \cite[Section 4.3 - Lemma 4.9]{R} with some minor changes. \\
The following properties will be very useful for our construction.
\begin{Defi}\label{s(u) beta property}
Let $\Theta$ be a basic set for $\cR$ and $\beta>0$ small, we say that a sub-horseshoe $\widetilde{\Theta}$ of $\Theta$ has the $\beta$-\emph{stable property} if 
$$HD(K_{\widetilde{\Theta}}^{s})> HD(K^{s}_{\Theta})-\beta,$$
where $K_{\widetilde{\Theta}}^{s}$, $K^{s}_{\Theta}$ are the stable Cantor sets associated to $\widetilde{\Theta}$ and $\Theta$ \emph{(see Section \ref{sec EMAH} to the definition of stable and unstable Cantor sets)}.\\
The definition of the $\beta$-\emph{unstable property} is analogous, using unstable Cantor sets instead of stable Cantor sets.
\end{Defi}
\begin{R}\label{MAIN-REMARK}
It is worth noting that the parameter $\beta$ in the definition above is small, \emph{i.e.}, $0<\beta<\max\{HD(K^{s}_{\Theta})/2, HD(K^{u}_{\Theta})/2\}$. In fact, all the lemmas in this section that involve the parameter $\beta$ work in this interval. However, in \emph{Section \ref{ProV}}, we use the basic set $\Delta$ and an interval of the parameters $\beta$, which is related to  $HD(K^{s}_{\Delta})$ and $HD(K^{u}_{\Delta})$, more specifically, since $HD(\Delta)=HD(K^{s}_{\Delta})+ HD(K^{u}_{\Delta})$ $($see \emph{Subsection \ref{sec EMAH}}$)$,  we take $\beta$ such that
$$HD(\Delta)-6\beta=HD(K^{s}_{\Delta})+ HD(K^{u}_{\Delta})-6\beta >1,$$
where $\Delta$ is the basic set given by \emph{Lemma \ref{L7.1}, which satisfies Lemma \ref{l.dimK}}.  \\
\end{R}
\begin{R}\label{MAIN-REMARK-1}
All results of this section will be valid for any basic set $\Theta$ of $\cR$. In \emph{Section \ref{Family}} we use all these results for the specific basic set $\Delta$.
\end{R}
\noindent Given a basic set $\Theta$ for $\cR$, a Markov partition $R$ of $\Theta$,  we define the set 
\begin{equation}\label{E1-Descrip}
{H}_{1}(\cR,\Theta)=\left\{f\in C^{1}(\Sigma\cap R,\re):\#M_{f}(\Theta)=1, \  z\in M_{f}(\Theta), \ D\cR_{z}(e_{z}^{j})\neq 0, j=s,u\right\},
\end{equation}
where $M_{f}(\Theta):=\{z\in \Theta: f(z)\geq f(x)\ \text{for all} \ x\in \Theta\}$, the set of maximum points of $f$ in $\Theta$ and $e_{z}^{j}\in E^{j}_{\Sigma}(z)$ is a unit vector, $j=s,u$. (cf. \cite[Section 3]{MR}).\\

A notion that will be useful in the next sections is the concept of the Markov partition of a horseshoe, to make the text easier to read we put its definition in the Appendix (see Subsection \ref{sec EMAH}).
\begin{Defi}\label{DLmax00} 
We say that  $F\in\mathcal{H}_{\Theta, \beta}^{s}\subset C^2(S^{g_{_{0}}}M,\re)$ if there is a sub-horseshoe $\Theta_F^{s}$ of $\Theta$ with the $\beta$-stable property and there is a Markov partition $R_{F}^{s}$ of $\Theta_{F}^{s}$ such that the function $\max  (F)_{\phi}|_{\Sigma\cap R_{F}^{s}}\in H_1(\cR,\Theta_F^{s})\subset C^{1}(\Sigma\cap R_{F}^{s},\re)$.
The definition for the unstable case $\mathcal{H}_{\Theta, \beta}^{u}$ is analogous $($see \emph{Subsection \ref{sec EMAH}} for the definition of Markov Partition$)$.
\end{Defi}

\begin{Le}\label{set of functions}{\emph{\cite[Lemma 4.9]{R}}} For any basic of $\Theta$ of $\mathcal{R}$ and  $\beta>0$ small, we have that ${\mathcal{H}}_{\Theta,\beta}^{j}$, 
is dense and $C^2$-open set,  $j=s,u$. Consequently, for  $g\in \mathcal{V}$ there is $\tilde{\beta}>0$ such that  $\mathcal{H}_{\Theta_g, \tilde{\beta}}^{j}$  is a dense and $C^2$-open set, $j=s,u$.
\end{Le}
\begin{R}\label{RDLmax00'}
By the construction at \emph{\cite[Section 3]{MR}} and the continuity of the Hausdorff dimension,  it is easy to see that, if $\mathcal{V}$ is small enough, then for $g\in \mathcal{V}$  there is $\tilde{\beta}$ \emph{(}which depends on $g$\emph{)} such that 
$$\mathcal{H}_{\Theta_g, \tilde{\beta}}^{j}=\bigg\{F\circ \mathcal{S}_g: F\in \mathcal{H}_{\Theta, \beta}^{j}\bigg \}, \, \, \text{for} \, \, j=s,u.$$
\end{R}
\begin{R}\label{Remark-principal-0} The proof of \emph{Lemma \ref{set of functions}} also allow us conclude the following: If $\Theta$ and $\Gamma$ are two basic sets of $\cR$, then the set ${\mathcal{H}}_{\Theta, \Gamma, \beta}\subset {\mathcal{H}}_{\Theta,\beta}^{s}\cap {\mathcal{H}}_{\Gamma,\beta}^{u}$, which satisfies
$$\max  (F)_{\phi}|_{\Sigma\cap (R_{F,\Theta}^{s}\cup R_{F,\Gamma}^{u})}\in H_1(\cR,(\Theta_{F}^{s}\cup \Gamma_{F}^{u}))\subset C^{1}(\Sigma\cap (R_{F,\Theta}^{s}\cup R_{F,\Gamma}^{u}),\re)$$ 
is dense and $C^2$-open.\\
Here $R_{F,\Theta}^{s}$, and $R_{F,\Gamma}^{u}$ are the Markov partition of $\Theta_{F}^{s}$ and $\Gamma_{F}^{u}$ given by the definition of ${\mathcal{H}}_{\Theta,\beta}^{s}$ and ${\mathcal{H}}_{\Gamma,\beta}^{u}$, respectively. \\
\indent Analogously, since \emph{Remark \ref{RDLmax00'}}, it is easy to see that, if $\mathcal{V}$ is small enough, then  for any $g\in \mathcal{V}$  there is $\tilde{\beta}$ \emph{(}which depends on $g$\emph{)} such that 
\begin{equation}\label{Eq-aux-0}
{\mathcal{H}}_{\Theta_g, \Gamma_g, \tilde{\beta}}=\{F\circ \mathcal{S}_g: F\in \mathcal{H}_{\Theta, \Gamma, \beta}\},
\end{equation}
and consequently ${\mathcal{H}}_{\Theta_g, \Gamma_g, \tilde{\beta}}$ is a dense and $C^2$-open set, 
where $\Theta_g$ and $\Gamma_g$ is the hyperbolic continuation of $\Theta$ and $\Gamma$ by $\cR_{g}$, respectively $($see  \emph{Subsection \ref{Dim. Red}}$)$.
\end{R}



\subsection{Construction of Typical Functions for Theorem \ref{T2}}\label{Description of function H_2}
The construction of ${\widetilde{\mathcal{H}}_{g,\Lambda}}$ is a little more complicated than the construction of {${\mathcal{H}}_{g,\Lambda}$}, although we will use some tools developed at {\cite[Lemma 4.9]{R}} to make $\text{max}(f\circ \pi)_{\phi}$ a $C^1$-function, we have an additional problem because the function $f\circ \pi$ is constant along of  the fiber of $SM$.\\
 For the construction of $\widetilde{\mathcal{H}}_{g, \Lambda}$ we need some auxiliary sets of functions, which give some differentiability and good properties to $\max (f\circ\pi)_{\phi}$. \\ 
 \indent Let $\Theta$ be a basic set for $\mathcal{R}$, and $\beta>0$ small (as Remark \ref{MAIN-REMARK}), we consider  two  one-parameter families of sets of functions, $\widetilde{\mathcal{H}}_{\Theta, \beta}^{s}, \widetilde{\mathcal{H}}_{\Theta, \beta}^{u} \subset C^2(M, \re)$,  defined as follows 
 
 $$\widetilde{\mathcal{H}}_{\Theta, \beta}^{j}:=\bigg\{f\in C^2(M,\re): f\circ \pi \in \mathcal{H}_{\Theta, \beta}^{j}\bigg\}, \ \, j=s,u.$$
In other words, $f\in \widetilde{\mathcal{H}}_{\Theta, \beta}^{s}$ if there is  a sub-horseshoe $\Theta_{f}^{s}$ of  $\Theta$  and a Markov partition $R_{f,\Theta}^{s}$ of $\Theta_{f}^{s}$ such that $\Theta_{f}^{s}$ has the $\beta$-stable property and $\max  \,(f\circ \pi)_{\phi}|_{\Sigma\cap R_{f,\Theta}^{s}}\in H_1(\cR,\Theta_{f}^{s})\subset C^{1}(\Sigma\cap R_{f,\Theta}^{s},\re)$. The stable case is analogous.
Similar to Remark \ref{RDLmax00'} we have 
\begin{R}\label{RDLmax0001}
It is easy to see that, for all metric $g$, $\pi_g= \pi\circ \mathcal{S}^{g}$. So, if $\mathcal{V}$ is small enough, then the construction at \emph{\cite[Section 3]{MR}} and the continuity of the Hausdorff dimension, provides that for $g\in \mathcal{V}$  there is $\tilde{\beta}$ \emph{(}which depends on $g$\emph{)} such that 
$$\widetilde{\mathcal{H}}_{\Theta_g, \tilde{\beta}}^{j}=\widetilde{\mathcal{H}}_{\Theta, \beta}^{j}, \, \, j=s,u$$
where $\Theta_g$ is the hyperbolic continuation of $\Theta$ by $\cR_{g}$ $($see  \emph{Section \ref{Dim. Red}}$)$.
\end{R}
Although the set $\mathcal{H}_{\Theta, \beta}^{j}$ is dense and $C^2$-open in $C^2(SM,\re)$, it does not easily follow that $\widetilde{\mathcal{H}}_{\Theta, \beta}^{j}$ is dense and $C^2$-open in $C^2(M,\re)$, $j=s,u$. Thus, the remainder of this section is devoted to showing that

\begin{Le}\label{Lmax4} 
For any basic $\Theta$ of $\mathcal{R}$ and $\beta>0$ small, we have that $\widetilde{\mathcal{H}}_{\Theta, \beta}^{j}$
is dense and $C^2$-open set, $j=s,u$. Consequently, for  $g\in \mathcal{V}$,  $\widetilde{\mathcal{H}}_{\Theta_g, \tilde{\beta}}^{j}$ is a dense and $C^2$-open set, $j=s,u$.
\end{Le}

\noindent The proof of this lemma is extensive and we present some auxiliary lemmas to do it.\\

The first important ingredient, from \cite[Lemma 4.6]{R}, for all constructions in this section will be the following lemma. Recall that $\Sigma=\bigsqcup\limits_{i=1}^k \Sigma_i$, then 
\begin{Le}\emph{(Lemma 2.6 - \cite{R})}\label{LIC}\\
Let $\alpha=\{\alpha_i:[0,1]\to \Sigma, i\in\{1,\dots,m\}\}$ be a finite family of $C^{1}$-curves and $\Theta$ a basic set for $\cR$. Then for all $\epsilon>0$ there is sub-horseshoe $\Theta_{\alpha}^{s}$ of $\Theta$ such that $\Theta_{\alpha}^{s} \cap \alpha_i([0,1])=\emptyset$, for any $i\in\{1,\dots,m\}$ and $\Theta_{\alpha}^{s}$ satisfies the $\epsilon$-stable property with respect to $\Theta$, that is,

$$HD(K_{\alpha}^{s})\geq HD(K^s_{\Theta})-\epsilon,$$
\noindent where $K_{\alpha}^{s}$, $K^{s}_{\Theta}$ are the stable regular Cantor sets that of  $\Theta_{\alpha}^{s}$ and $\Theta$, respectively. An analog result to the unstable case, using the $\epsilon$-unstable property.
 \end{Le} 

This lemma will allow us to delete suitable subsets of $\Theta$ without losing too much of the Hausdorff dimension of $\Theta$.

\begin{Defi}\label{DLmax2} 
Let $U$ be an open set of $SM$, we say that $f\in \mathcal{N}_{U,\Theta, \beta}^{s}\subset C^2(M,\re)$ if there is a sub-horseshoe $\Theta_f^{s}$ of $\Theta$ with the $\beta$-stable property and a Markov partition $R_{f, \Theta}^{s}$ of $\Theta_{f}^{s}$ so that for each $(x,v)\in \Theta_f^{s}$ we have 
\begin{equation*}
\#\{t\in \left(0,t_{+}(x,v)\right): \ \phi^{t}(x,v)\in U \, \, \text{and} \, \, t\, \, \text{is a critical point of} \ \ f\circ\pi({\phi}^{t}(x,v))\}<\infty,
\end{equation*}
where  $\#\, A$ denotes the cardinality of the set $A$.\\
The definition for the unstable case $\mathcal{N}_{U,\Theta, \beta}^{u}$ is analogous.
\end{Defi} 
\noindent When $U=SM$, then we denote $ \mathcal{N}_{SM,\Theta, \beta}^{j}:= \mathcal{N}_{\Theta, \beta}^{j}$, $j=s,u.$

\begin{Le}\label{Lmax1}
For any $\beta>0$ small, the set $\mathcal{N}_{\Theta,\beta}^{j}$ is dense and $C^2$- open, $j=s,u$.
\end{Le}
\noindent To prove this lemma we look at  specific open sets $U$ of $SM$ as below:
Let $U$ be an open set in $SM$ such that, 
\begin{enumerate}
\item[(i)] The closure, $\bar{U}$, of $U$ is contained in the image of a parametrization $\tilde{\varphi}\colon  V\times I\to SM$, where $V\subset \re^2$ and $I$ is an interval. For instance, if $\varphi$ is a parametrization of $M$, $\varphi\colon V\to M$,  such that the set $\{\frac{\partial \varphi}{\partial x},\frac{\partial \varphi}{\partial y}\}$ is an orthonormal basis and  $\widetilde{\varphi}(x,y,z)=(\varphi(x,y), \cos z\,\frac{\partial \varphi}{\partial x}+\sin z\,\frac{\partial \varphi}{\partial y})$.
\item[(ii)] $U\cap \ds\bigcup_{(x,v)\in \Theta}\, \, \bigcup_{t\in [0,t_{+}(x,v)]}\phi^{t}(x,v)\neq \emptyset$.
\end{enumerate}


\begin{Le}\label{Lmax2'}
If $U$ satisfies \emph{(i)} and \emph{(ii)}, then the set $\mathcal{N}_{U, \Theta, \beta}^{j}$ is dense and $C^2$-open, $j=s,u$.
\end{Le}
\begin{proof} We prove the stable case, since the unstable case is analog.
The openness is a consequence of the definition of $\mathcal{N}_{U, \Theta, \beta}^{s}$. Our task is to prove the density.
Let $f\in C^{\infty}(M,\re)$ and put  $F=f\circ \pi$. Then using the  local coordinates given by (i) we can write 
$F(x,y,z)=f\circ \pi \circ \widetilde{\varphi}(x,y,z)=f(\varphi(x,y))$ and the vector field $\phi$ we write as  $\phi(x,y,z)=(X_{1}(x,y,z),X_{2}(x,y,z),X_{3}(x,y,z))$.

\noindent Consider now the set 
\begin{eqnarray*}
S&=& \left\{(x,y,z):\left\langle \nabla F(x,y,z), \phi(x,y,z))\right\rangle=0\right\}\\
&  & \\
&=&\left\{(x,y,z):\frac{\partial f}{\partial x}X_{1}(x,y,z)+\frac{\partial f}{\partial y}X_{2}(x,y,z)=0\right\} \ \ \ \text{and}
\end{eqnarray*}
\begin{eqnarray*}
H&=&\left\{(x,y,z):\left\langle \text{Hess}F(x,y,z)\phi(x,y,z),\phi(x,y,z)\right\rangle=0\right\},
\end{eqnarray*}
where $\text{Hess}F$ is the Hessian matrix of $F$, given by $$\text{Hess}F(x,y,z)=\ds \left( \begin{array}{ccc}
\frac{\partial^{2}{f}}{\partial{x}^{2}} & \frac{\partial^{2}{f}}{\partial{x}\partial y} & 0 \\
\frac{\partial^{2}{f}}{\partial{x}\partial y}& \frac{\partial^{2}{f}}{\partial{y}^{2}} & 0\\
0 & 0 & 0\end{array} \right).$$\\
We have $$H=\left\{(x,y,z):\frac{\partial^{2}{f}}{\partial{x}^{2}}X_{1}^{2}+2\frac{\partial^{2}{f}}{\partial{x}\partial y}X_1X_2+\frac{\partial^{2}{f}}{\partial{y}^{2}}X_{2}^{2}=0\right\}.$$ Now we would like to perturb $f$ so that the sets $ S $ and $ H $ have a transversal intersection. In fact:\\
Since the vector field $\phi$ is transverse to fiber (see Appendix \ref{sec Geo SM}), then  $X_1\neq 0$ or $X_2\neq 0$. Suppose that $X_1\neq 0$, then put $$f_{\delta}(x,y)=f(x,y)- \ds\frac{\delta x^{2}}{2}-cx$$ (later the constant $c$ will be chosen accordingly). Taking $\delta$ a small  regular value of 
$$L(x,y,z):=\frac{\frac{\partial^{2}{f}}{\partial{x}^{2}}X_{1}^{2}+2\frac{\partial^{2}{f}}{\partial{x}\partial y}X_1X_2+\frac{\partial^{2}{f}}{\partial{y}^{2}}X_{2}^{2}}{X_1^2}.$$
Then, the set $H_{\delta}:=\left\{L(x,y,z)=\delta\right\}$ is a regular surface. In addition, for any choice of the parameter $c$, we also have that  
$$H_{\delta}=\left\{(x,y,z):\frac{\partial^{2}{f_{\delta}}}{\partial{x}^{2}}X_{1}^{2}+2\frac{\partial^{2}{f_{\delta}}}{\partial{x}\partial y}X_1X_2+\frac{\partial^{2}{f_{\delta}}}{\partial{y}^{2}}X_{2}^{2}=0\right\}.$$

\noindent Consider now the function $$G(x,y,z):=\ds\frac{\left(\frac{\partial f}{\partial x}-\delta x\right)X_{1}+\frac{\partial f}{\partial y}X_{2}}{X_1}$$ and
and fix a very small regular value $c$ (close enough to
0) of the restriction of $G(x,y,z)$ to $H_{\delta}$, $G(x,y,z)|_{_{H_{\delta}}}$. Thus, the regular surface
$$S_{\delta}:=\left\{G(x,y,z)=c\right\}=\left\{(x,y,z):\frac{\partial f_{\delta}}{\partial x}X_{1}+\frac{\partial f_{\delta}}{\partial y}X_{2}=0\right\}$$
is transverse to 
 $H_{\delta}$  and its intersection, $J_{\delta}:=S_{\delta}\pitchfork H_{\delta}$, form a finite family  of $C^1$-curves. Let $\alpha_{_{\delta}}$ be the projections of the curves $J_{\delta}$ along  the flow on $\Sigma$. Then, by Lemma \ref{LIC},  there are sub-horseshoes $\Theta_{\delta}^{s}$ of $\Theta$ such that $\Theta^{s}_{\delta}\cap \alpha_{\delta}=\emptyset$ and
$$HD(K^{s}_{\delta})\geq HD(K^{s}_{\Theta})-\beta,$$
where $K_{\delta}^{s}$, $K^{s}_{\Theta}$ are the stable Cantor sets associated to $\Theta_{\delta}^{s}$ and $\Theta$.
Moreover, let $R_{f_{_{\delta}}}^{s}$ be a  Markov partition of $\Theta^{s}_{\delta}$ such that $R_{f_{_{\delta}}}^{s}\cap \alpha_{_{\delta}}=\emptyset$. Then, if $(x,v)\in R_{f_{_{\delta}}}^{s}$, then  the critical points of  $(f_{\delta} \circ \pi)(\phi^{t}(x,v))|_{\{t:\phi^{t}(x,v)\in U\}}$ are non-degenerates and therefore  finite. Thus, $f_{\delta}\in \mathcal{N}_{U, \Theta, \beta}^{s}$ as we wish.
$f_{\delta}\in \mathcal{N}_{U, \Theta, \beta}^{s}$ as we wish. 
\end{proof}
\begin{proof}[\emph{\textbf{Proof of Lemma \ref{Lmax1}}}] The set $\mathcal{N}_{\Theta, \beta}^{s}$ is clearly open. \\
Since $\ds \bigcup_{(x,v)\in \Theta}\, \, \bigcup_{t\in [0,t_{+}(x,v)]}\phi^{t}(x,v)$ is a compact set, then there are a finite number of
open set $U_1,\dots, U_n \subset SM$, which satisfies (i) and (ii) such that 
\begin{equation}\label{EQ1-Lmax2}
\bigcup_{(x,v)\in \Theta}\, \, \bigcup_{t\in [0,t_{+}(x,v)]}\phi^{t}(x,v)\subset \bigcup_{i=1}^{n}U_i.
\end{equation}

\noindent From Lemma \ref{Lmax2'}, for each $i$,  $\mathcal{N}^{s(u)}_{U_i,\Theta, \beta}\subset C^{2}({M,\re})$ is dense and $C^2$-open. \\
\textbf{Claim:} $\ds\bigcup_{i=1}^{n}\mathcal{N}^{s}_{U_i,\Theta, \beta}\subset \mathcal{N}_{\Theta, \beta}^{s}$.
\begin{proof}[\emph{\textbf{Proof of Claim}}]
Let $f\in \ds\bigcup_{i=1}^{n}\mathcal{N}^{s}_{U_i,\Theta, \beta}$, then for each $i$ there is a sub-horseshoe $\Theta_{f,i}^{s(u)}$ of $\Theta$ which satisfies Definition \ref{DLmax2}. Therefore, taking $\Theta_{f}^{s}:=\ds\bigcup_{i=1}^{n}\Theta_{f,i}^{s}$ we see that $f\in \mathcal{N}_{\Theta, \beta}^{s}$. 
\end{proof}

\noindent Since $\ds\bigcup_{i=1}^{n}\mathcal{N}^{s}_{U_i,\Theta, \beta}$ is $C^2$-dense, then the claim completes the proof. \\
The unstable case is analogous.
\end{proof}
\begin{R}\label{RLmax1'}
Once again we cover $\Lambda$ with a finite number of tubular neighborhoods $U_r$, $1\leq r\leq m$ whose boundaries are the good cross-sections $\Sigma=\bigsqcup\limits_{i=1}^k\Sigma_i$ mentioned in Section \emph{\ref{Dim. Red}}. For each $r$, let us fix coordinates $(x_1(w),x_2(w),x_3(w))$ on $U_r$ such that $x_3(w)$ is the flow direction and $U_r\cap\Sigma=\{x_3(w)=0\}\cup\{x_3(w)=1\}$.
 
\noindent  Consider $f\in \mathcal{N}_{\Theta,\beta}^{j}$, and $\Theta_{f}^{j}$ the sub-horseshoe given by \emph{Definition \ref{DLmax2}},  $j=s,u$.\newline 
Then, the proof of \emph{Lemma \ref{Lmax2'}} implies that there is a Markov partition $R_{f,\Theta}^{j}$ of \, $\Theta_{f}^{j}$, such  that the value $\max (f\circ \pi)_{\phi_{g_{_{0}}}}(z)$, for $z\in R_f^{j}$, is described by the values of a finite collection of graphs transverse to the flow direction.
In other words, for each $(x_1,x_2,0)\in U_r\cap R_f^{j}$ there is neighborhood \ $V$ of $(x_1,x_2,0)$ and a finite collection of disjoints graphs $\{(x,y, \psi_l(x,y)): (x,y,0)\in V\}$, $1\leq l\leq n$, such that  if $F(x,y,t)=\max (f\circ \pi)_{\phi}(x,y,t)$ with $(x,y,0)\in V$, then $t=\psi_l(x,y)$ for some $l$. 
\end{R}
\begin{R}\label{RLmax1}
A sub-horseshoe $\Theta$ of $\mathcal{R}$ has finite many fixed points. Thus, removing the fixed points of $\Theta$, we obtain a sub-horseshoe \emph{(}that we still call $\Theta$\emph{)} with almost the same Hausdorff dimension of $\Theta$. Hence, from now on, we assume that $\cR$ has no fixed points on $\Theta$.
\end{R}

\begin{Defi}\label{DLmax3} 
We say that  $f\in \mathcal{M}_{\Theta,\beta}^{s}\subset C^2(M,\re)$ if there is a sub-horseshoe $\tilde{\Theta}_f^{s}$ of $\Theta$ with the $\beta$-stable property and a Markov partition $R_{f, \tilde{\Theta}}^{s}$ of $\tilde{\Theta}_{f}^{s}$ so that for each $(x,v)\in \tilde{\Theta}_f^{s}$ we have that $\pi({\phi}^{t_0}(x,v))\neq \pi({\phi^{t_1}}(x,v))$, whenever 
\begin{equation*}
t_0, t_1\in \{t\in(0,t_{+}(x,v)): \ t \ \text{is a critical point of} \ \ f\circ\pi({\phi^{t}}(x,v))\}.
\end{equation*}

\noindent The definition for the unstable case $\mathcal{M}_{\Theta,\beta}^{u}$ is analogous.
\end{Defi} 


\begin{Le}\label{Lmax3'}
For any $\beta>0$ small, the set $\mathcal{M}_{\Theta,\beta}^{j}$ is dense and $C^2$-open, $j=s,u$.
\end{Le}
\begin{proof}
It is clear that $\mathcal{M}_{\Theta, \beta}^{s}$ is a $C^2$-open set. \\
Consider the set $\mathcal{M}$ of Morse's functions of $M$, which is a dense and $C^2$-open set, then we will prove that the set $\mathcal{M}\cap \ \mathcal{N}_{\Theta,\beta/2}^{s}\subset\mathcal{M}_{\Theta,\beta}^{s}$ and therefore, from Lemma \ref{Lmax1}, $\mathcal{M}_{\Theta,\beta}^{s}$ is a dense set. 
Let $f\in \mathcal{M}\cap \ \mathcal{N}_{\phi,\beta/2}^{s}$, then  
the set of critical points of $f$ in $\pi(\bigcup_{x\in \Sigma}\bigcup_{x\in [0,t_{+}(x)]}\phi^{t}(x))$ is finite, which we denoted by $x_{1}^{f},\dots,x_{k}^{f}$. 
Consider the finite family of $C^1$-curves $\alpha_{f}$ given by the projections of the fibers  $\pi^{-1}({x_{i}^{f}})$ on $\Sigma$ along of the flow. Thus, applying Lemma \ref{LIC} to the family of curves $\alpha_{f}$ and the horseshoe $\Theta^{s}_{f}$ (given by  Lemma \ref{Lmax1} with  $\beta/2$), we obtain 
two sub-horseshoes $_{*}\Theta^{s}_{f}$ of $\Theta_{f}^{s}$, such that $_{*}\Theta^{s}_{f}\cap \alpha_{f}=\emptyset$, and 
$$HD(_{*}K^{s}_{f, \Theta})\geq HD(K^{s}_{f,\Theta})-\beta/2 \ \ \text{and} \ \ HD(_{*}K^{u}_{f,\Theta})\geq HD(K^{u}_{f,\Theta})-\beta/2,$$
where $_{*}K_{f,\Theta}^{s}$ is the stable  Cantor sets associated to $_{*}\Theta_{f}^{s}$.\\
Therefore, as $\Theta_{f}^{s}$ has the $\frac{\beta}{2}$ stable property, then 
$$HD(_{*}K^{s}_{f,\Theta})\geq HD(K^{s}_{\Theta})-\beta \ \ \text{and} \ \ HD(_{*}K^{u}_{f,\Theta})\geq HD(K^{u}_{\Theta})-\beta.$$ 
Put $\tilde{\Theta}_{f}^{s}:=\,_{*}\Theta_{f}^{s}$ and consider a Markov partition $_{*}R_{f, \tilde{\Theta}}^{s}$ of $_{*}\Theta^{s}_{f}$ such that $_{*}R^{s}_{f,\tilde{\Theta}}\cap \alpha_{f}=\emptyset$. 
We stated that $f\in\mathcal{M}_{\Theta,\beta}^{s}$, in fact: by contradiction, assume that there is $(x,v) \in {_{*}}R_{f, \tilde{\Theta}}^{s}$ and $t_0, t_1\in (0,t_{+}(x,v))$  critical points of $g(t):=f\circ \pi(\phi^{t}(x,v))$ such that $\pi({\phi}^{t_0}(x,v))= \pi({\phi}^{t_1}(x,v))$,  \emph{i.e.}, $g'(t_0)=g'(t_1)=0$. Moreover, since $g'(t)=\left\langle \nabla f(\gamma_{v}(t)),\gamma'_{v}(t)\right\rangle$, where $\gamma_{v}(t)=\pi(\phi^{t}(x,v))$, the construction of $_{*}\Theta_{f}^{s}$ provides that $\nabla f(\gamma_{v}(t))\neq 0$ for all $t\in(0,t_{+}(x,v))$. Thus, as $\gamma_{v}(t_0)=\gamma_{v}(t_1)$ must be $\gamma'_{v}(t_0)=-\gamma'_{v}(t_1)$ or $\gamma_{v}'(t_0)=\gamma_{v}'(t_1)$. In the first case, we have a contradiction with the uniqueness of the geodesics with respect to the initial conditions. In the second case, we have that $\gamma_{v}(t)$ is a closed geodesic, and this is a contradiction, since  $t_0,t_1\in \left(0,t_{+}(x,v)\right)$. Thus, we complete the proof of the lemma. The unstable case is analogous. 
\end{proof}
\begin{Defi}\label{DLmax001} 
We say that $f\in \mathcal{C}_{\Theta, \beta}^{s}\subset C^2(M,\re)$ if there is a sub-horseshoe $\Theta_f^{s}$ of $\Theta$ with the $\beta$-stable property and there is a Markov partition $R_{f, \Theta}^{s}$ of $\Theta_{f}^{s}$, such that the function $\max  (f\circ \pi)_{\phi}|_{\Sigma\cap R_{f, \Theta}^{s}}\in  C^{1}(\Sigma\cap R_{f, \Theta}^{s},\re)$.
The definition for the unstable case $\mathcal{C}_{\Theta, \beta}^{u}$ is analogous.
\end{Defi}

\begin{Le}\label{Lmax0}
For any $\beta>0$ small, the set $\mathcal{C}_{\Theta,\beta}^{j}$ is dense and $C^2$-open, $j=s,u$.
\end{Le}
\begin{proof}
The set $\mathcal{C}_{\Theta,\beta}^{s}$ is clearly a $C^2$-open set. We will just prove the density for the stable case. The proof for the unstable case is analogous. 

By Lemma \ref{Lmax3'}, it is sufficient to prove that $\mathcal{C}_{\Theta,\beta}^{s}$ is dense $\mathcal{M}_{\Theta,\beta/2}^{s}$.
Let $f\in \mathcal{M}_{\Theta,\beta/2}^{s}$, then using coordinates $(x_1(w),x_2(w),x_3(w))$ given in Remark \ref{RLmax1'} our task is reduced to perturb $f$ in such a way that $\max (f\circ \pi)_{\phi}(x_1,x_2,t)$ is given by the values of $\max(f\circ \pi)_{\phi}$ on a unique graph $(x,y,\psi_{i}(x,y))$, $(x,y)\in V$,  $1\leq i \leq n$ (see Remark \ref{RLmax1'}).
Along this lines,  we employ the argument from \cite[Lemma  4.7]{R}. We begin noting that by Lemma \ref{Lmax3'}
\begin{equation}\label{E2Lmax3'}
\pi\{\text{graph} \, \psi_i\}\cap \pi\{\text{graph}\, \psi_j\}=\emptyset, \, \,  i\neq j. 
\end{equation}
Let $O_i, \tilde{O}_i$ be small neighborhoods of $\pi(\{z=\psi_{i}(x,y)\})$ such  that $O_i\subset \tilde{O}_i$ and $\tilde{O}_i\cap \tilde{O}_{j}=\emptyset$, $i\neq j$. Let $g_{1j}=f\circ \pi (x,y,\psi_{1}(x,y))-f\circ \pi (x,y,\psi_{j} (x,y))$ for $j\neq 1$. Consider $\gamma_1>0$ a small regular value of $g_{1j}$ for all $j\neq 1$. Take $\xi_1$ a $C^{\infty}$- bump function, equal to $-\gamma_1$ in $O_1$  and $0$ outside of $\tilde{O}_1$. So, the function $(f+\xi_1)$ is $C^2$-close to $f$. We define the function $$g_{1j}^{\gamma_1}(x,y)=((f+\xi_1)\circ \pi)(x,y,\psi_{1}(x,y))-((f+\xi_1)\circ \pi )(x,y,\psi_{j}(x,y))=g_{1j}(x,y)-\gamma_1.$$ 
Put $f_{{1}}:=f+\xi_{1}$ and define $g_{2j}(x,y)=(f_1\circ\pi)(x,y,\psi_2(x,y))-(f_{1}\circ \pi)(x,y,\psi_{j}(x,y))$ for $j\neq 2$. Consider  $\gamma_2>0$ a small regular value of $g_{2j}$ for all $j\neq2$. Take $\xi_2$ a $C^{\infty}$-bump function, equal to $-\gamma_2$ in $O_2$ and $0$ outside $\tilde{O}_2$.  So, the function $f_{1}+\xi_2$ is $C^2$-close to $f_1$ and define the function $$g_{2j}^{\gamma_2}(x,y)=((f_1+\xi_2)\circ\pi))(x,y,\psi_{2}(x,y))-((f_1+\xi_2)\circ\pi))(x,y,\psi_{j}(x,y))=g_{2j}(x,y)-\gamma_2.$$

The equation (\ref{E2Lmax3'}) implies that the functions $\xi_{1}\circ \pi$ and $\xi_{2}\circ \pi$ have disjoint support and therefore the perturbation of $f$, $f_1$, does not affect the perturbation $f_2$. Thus, as it is explained in \cite[Lemma 4.7]{R}, we can perform a small perturbation, $\tilde{f}$, of $f$ on $V$ in such a way that $0$ is a simultaneous regular value of the functions $(x,y)\mapsto g_{ij}(x,y):= \tilde{f}\circ \pi (x_1,x_2,\psi_i(x,y))-\tilde{f}\circ\pi  (x,y,\psi_j(x,y))$ for all choices of $1\leq i<j\leq n$. In this situation, $L_n=\bigcup\limits_{1\leq i<j\leq n} g_{ij}^{-1}(0)$ is a finite collection of $C^1$-curves such that, for each $y\in V\setminus L_n$, the values of $\max (\tilde{f}\circ \pi)_{\phi}$ near $y$ are described by the values of $\tilde{f}\circ \pi$ at a unique graph. Hence, for each $y\in V \setminus L_n$, one has that $\max (\tilde{f}\circ \pi)_{\phi}(y)=(\tilde{f}\circ \pi)(\phi^{t(y)}(y))$ for a unique $0\leq t(y)\leq t_+(y)$ that depends $C^1$ on $y$.

Since $f\in \mathcal{M}_{\Theta, \beta/2}^{s}$, then $\tilde{f}\in \mathcal{M}_{\Theta, \beta/2}^{s}$ and let $\Theta^s_{\tilde{f}}$ the sub-horseshoe of definition \ref{DLmax3} which satisfies  
$$HD(K^{s}_{\tilde{f},\Theta})> HD(K^{s}_{\Theta})-\beta/2,$$
where $K_{\tilde{f},\Theta}^{s}$ is the stable Cantor sets associated to $\Theta_{\tilde{f}}^{s}$.
Moreover, for $\beta/2$, the  Lemma \ref{LIC}  provides a sub-horseshoe $_{*}\Theta^{s}_{\tilde{f}}$ of $\Theta_{\tilde{f}}^{s}$ a Markov partition $_{*}R^{s}_{\tilde{f},\Theta}$ of $_{*}\Theta^{s}_{\tilde{f}}$ such that $_{*}R^{s}_{\tilde{f},\Theta}\cap L_n=\emptyset$ and 
$$HD(_{*}K^{s}_{\tilde{f},\Theta})> HD(K^{s}_{\tilde{f},\Theta})-\beta/2> HD(K^{s}_{\Theta})-\beta,$$
where $_{*}K_{\tilde{f},\Theta}^{s}$ is the stable (unstable) Cantor sets associated to $_{*}\Theta_{\tilde{f}}^{s}$.\\
The above discussion allows us to conclude that $\tilde{f}\in \mathcal{C}_{\Theta,\beta}^{s}$, as we wish. 
\end{proof}
\begin{R}\label{RLmax2}
For $j=s,u$, the construction of the sub-horseshoe $\Theta_{f}^{j}$ and the Markov Partition $R_{f, \Theta}^{j}$ at \emph{Lemma $\ref{Lmax0}$} satisfies  
\begin{itemize}
\item[\emph{(i)}] $\lbrace \pi(\phi^{t}(z)):z\in \Theta_{f}^{j}, \, 0\leq t\leq t_{+}(z)\rbrace \cap \{ \text{Critical points of} \  f \ \}=\emptyset$,
\item[\emph{(ii)}] For any $z\in R_{f, \Theta}^{s}$ we have that 
$$\#\{t\in [0,t^{+}(z)]: \max (f\circ \pi)(z)=f\circ\pi(\phi^t(z))\}=1,$$
\item[\emph{(iii)}] Note that, for any sub-horseshoe $\Theta$, the properties \emph{(i)} and \emph{(ii)} are persistently for small perturbations of $\Theta$. So, if $f_1,f_2\in \mathcal{C}_{\Theta,\beta}^{j}$ are close enough, then the sub-horseshoe $ \Theta_{f_1}^{j}$ can choose  close to  $\Theta_{f_2}^{j}$ in the Hausdorff distance for compact sets, $j=s,u$.
\end{itemize}

\end{R}

The rest of this section will be devoted to proving  Lemma \ref{Lmax4}. The set of functions $\mathcal{C}_{\Theta, \beta}^{j}$ provides the $C^1$ property for $\max(f\circ\pi)_{\phi}|\Sigma \cap R_{f, \Theta}^{j}$, $j=s,u$. To finally prove  Lemma \ref{Lmax4} we need to find the conditions of (\ref{E1-Descrip}).


 \subsubsection{The set of geodesics with transversal self-intersection}\label{sec GCSI}
Given any Riemannian metric $g$ on $M$, we define the set, $\mathcal{SI}_{g}$, of self-intersection of geodesics  by: 
   $$\mathcal{SI}_{g}=\left\{(x,v)\in S^{g}M: \exists \ \ t(x,v) \ \ \text{such that} \ \gamma_{v}(t(x,v))=x \ \ \text{and} \ 
\{v,\gamma_{v}'(t(x,v))\} \, \, 
 \text{are} \,\, \ \text{L.I.} \ \right\},$$
where L.I. stands for linearly independent and $\gamma_{v}(t)$ is the geodesic in the metric $g$ with $\gamma_{v}(0)=0$ and $\gamma'_{v}(0)=v$.\\

\begin{R} 
For any Riemannian metric $g$ of finite volume, the \emph{Liouville measure} is finite and  invariant by the geodesic flow, then  $\mathcal{SI}_{g}\neq \emptyset$.
\end{R}
\begin{Pro} \label{PauxLmax} The set $\mathcal{SI}_{g}$ is a cross-section for the geodesic flow of the metric $g$, $\phi_{g}^{t}$.
\end{Pro}


\begin{proof}
\noindent Let $(x_0,v_0) \in \mathcal{SI}_{g}$ and $t_0$ such that $\gamma_{v_0}(t_0)=x_0$ with $\{v_0,\gamma_{v_0}'(t_0)\}$. Consider $\mathcal{L}$ a transverse section to the flow $\phi_{g}^{t}$ and the fiber $\pi^{-1}_{g}(x_0)$ and define the following function 
\begin{eqnarray*}
h : &\mathcal{L}\times \re& \longrightarrow M\\
&((x,v),t)&\longmapsto \pi_{g}(\phi^{t}(x,v))
\end{eqnarray*}

\noindent with $h((x_0,v_0),0)=x_0$ and $h((x_0,v_0),t_0)=x_0$. Put $h_{0}:=h|_{\mathcal{L}\times I_0}$ and
$h_{t_0}:=h|_{\mathcal{L}\times I_{t_0}}$, where $I_0$ and $I_{t_{0}}$ are small intervals containing $0$ and $t_0$, respectively.

Let $\varphi\colon W_0\subset T_{x_0}M\to U_{x_0}$ be normal coordinates in $x_0$, where $U_{x_0}$ is a neighborhood of $x_0$, \emph{i.e.}, 
let $\{e_1,e_2\}$ be orthonormal basis of $T_{x_0}M$ and $\varphi(x_1,x_2)=\varphi(x_1e_1+x_2e_2)$\ $=\exp_{_{x_0}}^{g}(x_1e_1+x_2e_2)$, where $\exp_{_{x_0}}^{g}$ is the exponential map with the metric $g$. \\
\noindent We define 
$H\colon\mathcal{L}\times I_{t_0}\times I_{0}\longrightarrow V_{0}\subset T_{x_0}M$
\noindent by $$\ds H\left((x,v),t,s\right)=\left(\varphi^{-1}\circ h_{t_0}\right)((x,v),t)-\left(\varphi^{-1}\circ h_{0}\right)((x,v),s)$$ with  
 $H((x_0,v_0),t_0,0)=0$.

\noindent Moreover, 
\begin{eqnarray*}
\ds \frac{\partial H}{\partial t}{\left((x_0,v_0),t_0,0\right)}&=&(D\varphi^{-1})_{h_{t_{_{0}}}((x_0,v_0),t_0)}\left[\frac{\partial h_{t_0}}{\partial t}((x_0,v_0),t_0)\right]\\
&=&(D\exp_{x_0}^{-1})_{x_0}(\gamma_{v_0}'(t_0))=\gamma_{v_0}'(t_0), 
\end{eqnarray*}
since $D((\exp_{x_0}^{g})^{-1})_{x_0}=Id\colon T_{x_0}M\to T_{x_0}M$, the  identity of $T_{x_{0}}M$.
Also,
\begin{eqnarray*}
\ds \frac{\partial H}{\partial s}{\left((x_0,v_0),t_0,0\right)}&=&-(D\varphi^{-1})_{h_{_{0}}((x_0,v_0),0)}\left[\frac{\partial h_{0}}{\partial s}((x_0,v_0),0)\right]\\
&=&-D((\exp_{x_0}^{g})^{-1})_{x_0}(\gamma_{v_0}'(0))=-D((\exp_{x_0}^{g})^{-1})_{x_0}(v_0)\\
&=&-{v_0}.
\end{eqnarray*}
Since $\{-v_0,\gamma_{v_0}'(t_0)\}$ are linearly independent, then, $\frac{\partial H}{\partial (t,s)}$ is an isomorphism. Therefore, by the 
Implicit Function Theorem, there is neighborhood $U_{\mathcal{L}}\subset \mathcal{L}$ of $(x_0,v_0)$ and a diffeomorphism $\xi\colon U_{\mathcal{L}} \to V_{(t_0,0)}$, where $V_{(t_0,0)}$ is an open set containing $(t_0,0)\in \re \times \re$ and 
$H\left((y,w),\xi(y,w)\right)=0$.

\noindent Without loss of generality we can assume that $V_{(t_0,0)}=\widetilde{I}_{t_0}\times \widetilde{I}_{0}$ and
$$\xi(y,w)=(\xi_1(y,w),\xi_2(y,w)),$$
with $\xi_1$ close to $t_0$ and $\xi_2$ close to $0$. Then, by definition of $H$ we have that 
$$\exp_{x_0}^{-1}(\pi_{g}(\phi^{\xi_1(y,w)}(y,w)))=\exp_{x_0}^{-1}(\pi_{g}(\phi^{\xi_2(y,w)}(y,w))).$$ 
Therefore $\pi_{g}(\phi^{\xi_1(y,w)}(y,w))=\pi_{g}(\phi^{\xi_2(y,w)}(y,w))$. Equivalently, $\gamma_{w}(\xi_1(y,w))=\gamma_{w}(\xi_2(y,w))$, where $\pi_{g}(\phi^{t}(y,w))=\gamma_{w}(t)$ for any $(y,w)\in U_{\mathcal{L}}$.
Consider the new cross-section  
$$\widetilde{U}_{\mathcal{L}}=\left\{\phi^{\xi_2(y,w)}(y,w):(y,w)\in U_{\mathcal{L}}\right\}.$$
Note that $\xi_{1}(x_0,v_0)=t_0$ and $\xi_{2}(x_0,v_0)=0$, so $(x_0,v_0)\in \widetilde{U}_{\mathcal{L}}$.

\noindent  Let $(x,v)\in \widetilde{U}_{\mathcal{L}}$, then there is a unique $(y,w)\in U_{\mathcal{L}}$ such that $(x,v)=\phi^{\xi_2(y,w)}(y,w)$ and 
\begin{eqnarray*}
x=\pi_{g}(x,v)&=&\pi_{g}(\phi^{\xi_2(y,w)}(y,w))=\pi_{g}(\phi^{\xi_1(y,w)}(y,w))\\
&=&\pi_{g}\left(\phi^{\xi_1(y,w)-\xi_2(y,w)}\left(\phi^{\xi_2(y,w)}(y,w)\right)\right)\\
&=&\pi_{g}\left(\phi^{\eta(y,w)}(x,v)\right),
\end{eqnarray*}
where $\eta(y,w)=\xi_1(y,w)-\xi_2(y,w)$ is close to $t_0$. This implies that for any $(x,v)\in \widetilde{U}_{\mathcal{L}}$ there is 
$\eta(y,w)$ such that $\phi^{\eta(y,w)}(x,v)\in \pi^{-1}_{g}(x)$ and $\{v,\gamma'_{v}(\eta(y,w))\}$ are linearly independent and consequently $\widetilde{U}_{\mathcal{L}}\subset \mathcal{SI}_{g}$.

Thus,  the above discussion leads us to see that $\mathcal{SI}_{g}$ is a cross-section.
\end{proof}

\begin{R}\label{R1-Paux}
If $\mathcal{SI}_{g}^{n}:=\{(x,v)\in \mathcal{SI}_{g}: \ \left|t(x,v)\right|<n \}$.
Clearly, $\mathcal{SI}_{g}^{n}\subset\mathcal{SI}_{g}^{n+1}$. Moreover, given $(x,v)\in \mathcal{SI}_{g}^{n}$, from  \emph{Proposition \ref{PauxLmax}}, there is a
neighborhood $U$ of $(x,v)$ in $\mathcal{SI}_{g}$ such that $U\subset\mathcal{SI}_{g}^{n+1}$. Therefore, we can consider that $\mathcal{SI}_{g}^{n}$ as a submanifold of $S^{g}M$ of dimension $2$. Moreover,  since the transversality condition is open and dense, then, from now on, we assume that $\Sigma_{g}$ is  transverse to the  surface $\mathcal{SI}_{g}$ and $\mathcal{SI}_{g}^{n}$. 
\end{R}
\begin{R}\label{R2-Paux}
Under the conditions of the above remark, $\Sigma\pitchfork \mathcal{SI}_{g_{_{0}}}$ is a finite family of smooth curves $\alpha$. So, by \emph{Lemma \ref{LIC}}, given $\beta>0$ we have that there is a sub-horseshoe, $\Theta_{\alpha}^{s}$, of $\Theta$ such that $\Theta_{\alpha}^{j} \cap \Theta=\emptyset$ satisfying the $\beta$-stable property. Analogously, there is a sub-horseshoe $\Theta_{\alpha}^{u}$ for the unstable case. 
So, in order to avoid any confusion in the notation, we put $\Theta_1:=\Theta_{\alpha}^{s}$ and $\Theta_2:=\Theta_{\alpha}^{u}$.

\end{R}

Let $\beta>0$ be small enough, let $f\in \mathcal{C}_{\Theta_1,\beta}^{s}$, then there is a sub-horseshoe  $\Theta_{1,f}^{s}$  of $\Theta_1$ which satisfies the \text{Definition} \ref{DLmax001}. Analogously, if  $f\in \mathcal{C}_{\Theta_2,\beta}^{u}$ there is a sub-horseshoe $\Theta_{2,f}^{u}$  of $\Theta_2$ satisfying of Definition \ref{DLmax001}.

\begin{Defi}\label{DLmax5} We say that $f\in\mathcal{A}_{\Theta_1,\beta}^{s} \subset \mathcal{C}_{\Theta_1,\beta}^{s}$ if there is $z_f\in M_{\max (f\circ \pi)_{\phi}}(\Theta_{1,f}^{s})$ \emph{(}the set of maximum point of $\max(f\circ \pi)_{\phi}$  on  $\Theta_{1,f}^{s}$\emph{)} such that 
\begin{equation}\label{e1Lmax4}
\max (f\circ \pi)_{\phi}(z_f)>\max (f\circ \pi_g)_{\phi}(z), \, z \in \pi^{-1}(\pi(z_f))\cap \Theta_{1,f}^{s}.
\end{equation}
Changing $s$ by $u$ and $\Theta_1$ by $\Theta_2$, we have the definition of $\mathcal{A}_{\Theta_{2},\beta}^{u}$.
\end{Defi}
\begin{Le}\label{Lmax5}
The sets $\mathcal{A}_{\Theta_1,\beta}^{s}$ and $\mathcal{A}_{\Theta_2,\beta}^{u}$ are dense and $C^2$-open.
\end{Le}
\begin{proof}
We prove the lemma for $\mathcal{A}_{\Theta_1, \beta}^{s}$  since the proof for $\mathcal{A}_{\Theta_2, \beta}^{u}$ is analog. \\
By definition, the set $\mathcal{A}_{\Theta_1, \beta}^{s}$ is $C^2$-open. To prove the density, we have to prove simply that $\mathcal{A}_{\Theta_1, \beta}^{s}$ is dense in $\mathcal{C}_{\Theta_1, \beta}^{s}$. In fact: 
\noindent
Let $f\in\mathcal{C}_{\Theta_1,\beta}^{s}$, and $z_{f}=(x,v)$ a maximum point of $\max (f\circ \pi)_{\phi}$ in
$\Theta_1$. We can assume, without loss of generality, that  $\pi^{-1}(\pi(z_f)) \pitchfork \Sigma$ and therefore
$\#(\pi^{-1}(\pi(z_f))\cap \Sigma)<\infty$. If $f$
satisfies the condition of Definition \ref{DLmax5} the result is proven. Otherwise, we can suppose that there is 
$z=(x,w)\in\pi^{-1}(\pi(z_{f}))\cap \Theta_{1,f}^{s}$ such that 
$\max (f\circ \pi)_{\phi}(z_{f})=\max (f\circ \pi)_{\phi}(z)$, put $\tilde{z}_{f}:=\phi^{t(z_{f})}(z_f)$ and
$\tilde{z}:=\phi^{t(z)}(z)$ such that $(f\circ\pi)(\tilde{z}_{f})=(f\circ\pi)(\phi^{t(z_{f})}(z_f))=\max (f\circ \pi)_{\phi}(z_{f})=\max (f\circ \pi)_{\phi}(z)=f\circ\pi(\tilde{z})=f\circ\pi(\phi^{t(z)}(z))$.\\ 
\ \\
\textbf{Claim:} The points $\tilde{z}_{f}$ and $\tilde{z}$ satisfies 
\begin{enumerate}
	\item[1)] $\pi(\tilde{z}_{f})\neq\pi(\tilde{z})$,
	\item[2)] $\pi^{-1}(\pi(\tilde{z}_f))\cap \{\phi^{t}(z):t\in[t_{-}(z),t_{+}(z)]\}=\emptyset$. 
\end{enumerate}

\begin{proof}[\textbf{\emph{Proof of Claim}}]First we prove item 1)
By contradiction, assume that $\pi(\tilde{z}_{f})=\pi(\tilde{z})$, then as $df_{\pi(\tilde{z}_{f})}\neq 0$ (see  Remark \ref{RLmax2}) and $0=\frac{d}{dt}(f\circ\pi(\phi^{t}(z_f))|_{t=t(z_f)}=df_{\pi(\tilde{z}_{f})}\gamma_{v}'(t(z_f))$ and 
$0=\frac{d}{dt}(f\circ\pi(\phi^{t}(z))|_{t=t(z)}=df_{\pi(\tilde{z})}\gamma_{w}'(t(z))$, we have that $\gamma_{v}'(t(z_f))$ and $\gamma_{w}'(t(z))$ are parallel. Thus, due the uniqueness of geodesics, we must have $\gamma_{v}'(t(z_f))=-\gamma_{w}'(t(z))$. 
Note that $\Theta$ does not contain fixed points (see Remark \ref{RLmax1}),
then $v$ and $w$ are Linearly Independence and  therefore $z_f\in \mathcal{SI}_{g_{_{0}}}$, which is a contradiction by Remark \ref{R2-Paux} and the proof of item 1) is complete.\\
\indent Let us now prove item 2). Suppose by contradiction that $\phi^{t_{1}}(z)\in \pi^{-1}(\pi(\tilde{z}_f))$. Then 
$f\circ\pi(\phi^{t_1}(z))=f\circ\pi(\tilde{z}_{f})=f\circ\pi(\tilde{z})=f\circ\pi(\phi^{t(z)}(z))$. By  uniqueness of the maximum 
points of function $t \to f\circ\pi(\phi^{t}(z))$ (see Remark \ref{RLmax2}), we have that $t_1=t(z)$, \emph{i.e.},
$\tilde{z}=\phi^{t(z)}(z)=\phi^{t_1}(z)\in \pi^{-1}(\pi(\tilde{z}_{f}))$, which is not possible by item 1) and the proof of item 2) is complete .
\end{proof}


Following the proof of the lemma, note that the second property of claim provides  that for 
$z\in (\pi^{-1}(\pi(z_{f}))\setminus\{z_{f}\})\cap \Theta^{s}_{1,f}$ with $\max (f\circ \pi)_{\phi}(z_{f})=\max (f\circ \pi)_{\phi}(z)$, there are
neighborhoods $V_{z}$ of $z$ in $\Sigma$ and $U_{\pi(\tilde{z}_{f})}$ of $\pi(\tilde{z}_{f})$  such that 
\begin{equation}\label{ELmax3'}
\left( \bigcup_{y\in V_z}\{\phi^{t}_{g}(y):t\in [t_{-}(y),t_{+}(y)]\}\right) \cap \pi^{-1}(U_{\pi(\tilde{z}_{f})})=\emptyset.
\end{equation}
Let $R^{s}_{1,f}$ be Markov partition from $\Theta^{s}_{1,f}$ of Definition \ref{DLmax001}. Given $\epsilon>0$ sufficiently small, let $0\leq \varphi\leq \epsilon$  a $C^{\infty}$-bump function with a unique maximum point equal to $\epsilon$ in $\pi(\tilde{z}_{f})$ and $0$ outside of $U_{\pi(\tilde{z}_{f})}$.  Then, $z_f$ is a maximum point of $\max \,(f+\varphi)_{\phi}|_{\Sigma\cap {R_{1,f}^{s}}}$. \\
\noindent Moreover, if $z\in (\pi^{-1}(\pi(z_{f}))\setminus\{z_f\})\cap\Theta_{1,f}^{s}$, then by equation (\ref{ELmax3'}), we have that $\phi^{t}(z)\notin \pi^{-1}(U_{\pi(\tilde{z}_{f})})$, thus 
\begin{eqnarray*}
(f+\varphi)\circ\pi(\phi^{t}(z))&=&f(\pi(\phi^{t}(z)))+\varphi(\pi(\phi^{t}(z)))\\
&<& f(\pi({\tilde{z}_{f}}))+\epsilon=f(\pi({\tilde{z}_{f}}))+\varphi(\pi(\tilde{z}_{f}))=\max(f+\varphi)_{\phi}(z_{f}),
\end{eqnarray*}
\emph{i.e.}, $\max(f+\varphi)_{\phi}(z)<\max(f+\varphi)_{\phi}(z_f)$.
 
The previous property together with the fact that $\pi^{-1}(\pi(z_f))\pitchfork \Sigma$ implies that there is a neighborhood 
$U_{z_{_{f}}}$ of $z_f$ in $\Sigma$ such that $$\max(f+\varphi)_{\phi}(\tilde{z})<\max(f+\varphi)_{\phi}(x)$$
$\tilde{z}\in (\pi^{-1}(\pi(x))\setminus\{x\})\cap \Theta_{1,f}^{s} \ \text{and} \ x\in U_{z_f}.$
Finally, note that for $\epsilon$ small enough, $f+\varphi \in \mathcal{C}_{\Theta_1, \beta}^{s}$, and $\Theta_{1,f+\varphi}^{s}$ is close to $\Theta_{1,f}^{s}$ (see Remark \ref{RLmax2}). Therefore, the last inequality holds for $f+\varphi$ and $\Theta^{s}_{1,f+\varphi}$, and then $f+\varphi\in \mathcal{A}_{\Theta_1,\beta}^s$, as we wished. 
\end{proof}

\begin{C}\label{CLmax3'}
With the notation of \emph{Lemma \ref{Lmax5}}, if $f\in \mathcal{A}_{\Theta_1, \beta}^{s}$ there is a neighborhood \ $U_{z_{_{f}}}\subset \Sigma$ of $z_f$ such that 
$$\max (f\circ \pi)_{\phi}(x)>\max (f\circ \pi)_{\phi}(\tilde{x}) $$
for all $\ x\in U_{z_f}$ {and} $\tilde{x}\in \left(\pi^{-1}(\pi(x))\setminus\{x\}\right)\cap \Theta_{1,f}^{s}.$
\end{C}
\begin{proof} 
Without loss of generality, we can assume that $\pi^{-1}(\pi(z_f))\pitchfork \Sigma$ and  there is a neighborhood $V$ of $z_f$ such that 
$$\pi^{-1}(\pi(z))\pitchfork \Sigma, \, \, \text{for all}\, \, z\in V.$$

By contradiction, suppose that there are $z_n\to z_f$ and 
$\tilde{z}_{n}\in \pi^{-1}(\pi(z_n))\cap \Theta_{1,f}^{s}$ such that $\max (f\circ \pi)_{\phi}(z_n)\leq \max (f\circ \pi)_{\phi}(\tilde{z}_n)$. By compactness and transversality, we assume that $\tilde{z}_{n}\to z\in \pi^{-1}(\pi(z_f))\setminus\{z_f\}$. Therefore, since $z_f$ is a maximum point of $\max (f\circ \pi)_{\phi}$, then $\max (f\circ \pi)_{\phi}(z_f)=\max (f\circ \pi)_{\phi}(z)$, so $f\notin \mathcal{A}_{\Theta_1,\beta}^{s}$ and we have a contradiction. 
\end{proof}
\begin{proof}[\emph{\textbf{Proof of Lemma \ref{Lmax4}}}] We simply prove the lemma for $\widetilde{\mathcal{H}}_{\Theta,\beta}^{s}$ since the proof for $\widetilde{\mathcal{H}}_{\Theta,\beta}^{u}$ is analog. 
The openness $\widetilde{\mathcal{H}}_{\Theta,\beta}^{s}$  is a consequence of its  definition.
By density of Lemma \ref{Lmax5}, it suffices to prove that $\widetilde{\mathcal{H}}_{\Theta,\beta}^{s}$ is dense in $\mathcal{A}_{\Theta_1, \beta/2}^{s}$.\\ By definition, if  $f\in \mathcal{A}_{\Theta_1, \beta/2}^{s}$, there is $z_{f}=(x,v)\in \Theta_{1,f}^{s}$ a maximum point of $\max (f\circ \pi)_{\phi}$. Then by Corollary \ref{CLmax3'} there is a neighborhood \ $U_{z_f}$ of $z_f$ such that 
\begin{equation}\label{ELmax3''}
\max (f\circ \pi)_{\phi}(x)>\max (f\circ \pi)_{\phi}(\tilde{x}) \ 
\end{equation}
for all $ x\in U_{z_f} \ \text{and} \ \tilde{x}\in \left(\pi^{-1}_{g}(\pi_{g}(x))\setminus\{x\}\right)\cap \Theta_{1,f}^{s}$.
It is easy to see that there is  $h\in C^{2}(\Sigma\cap R_{1,f}^{s},\re)$, $C^2$-close to the null function, $h=0$ outside of $U_{z_{f}}$, and such that the function  $\max (f\circ \pi_g)_{\phi_g}+h\in{H}_{1}(\cR,\Theta_{1,f}^{s})$ with the maximum point $\tilde{z}_{f}$ in $U_{z_{f}}$, \emph{i.e.}, $\tilde{z}_{f} \in M_{\max (f\circ \pi)_{\phi}+h}(\Theta_{1,f}^{s})\cap U_{z_{f}}$ (the set of maximum points of $f$ in $\Theta_{1,f}^{s}\cap U_{z_{f}}$).  Without loss of generality, we can assume that 
$\pi|_{U_{z_{f}}}:U_{z_{f}}\to \pi(U_{z_{f}})$ is a diffeomorphism. Put $\tilde{h}:=h\circ (\pi|_{U_{z_{f}}})^{-1}\colon \pi(U_{z_{f}})\to \re$ and put $j$ equal to $0$ outside of the neighborhood of $\pi(U_{z_{f}})$. 
\ \\
\textbf{Claim:} The function $\max (f\circ \pi)_{\phi}+\tilde{h}\circ \pi\in{H}_{1}(\cR,\Theta_{1,f}^{s})$. 
\begin{proof}[\emph{\textbf{Proof of Claim}}]
We have to prove simply that $\tilde{z}_{f}$ is the only maximum point of \break $\max (f\circ \pi)_{\phi}+\tilde{h}\circ \pi$ in $\Theta_{1,f}^{s}$. Since 
$\tilde{h}\circ \pi=h$ on $U_{z_{f}}$, then $\tilde{z}_{f} \in M_{\max (f\circ \pi)_{\phi}+\tilde{h}\circ \pi}(U_{z_{f}})\cap U_{z_f}$. Now, if 
$z\in (\pi^{-1}(\pi(U_{z_{f}}))\setminus U_{z_{f}})\cap \Theta_{1,f}^{s}$, then there is $x\in U_{z_{f}}$ such that
$z\in \pi^{-1}(\pi(x))$. Thus $\tilde{h}\circ\pi(z)=\tilde{h}\circ\pi(x)$. Moreover,  by inequality (\ref{ELmax3''}) we have 
$\max (f\circ \pi)_{\phi}(x)>\max (f\circ \pi)_{\phi}(z)$. This implies that 
$$\max (f\circ \pi)_{\phi}(\tilde{z}_{f})+\tilde{h}\circ\pi(\tilde{z}_{f})>\max (f\circ \pi)_{\phi}(x)+\tilde{h}\circ \pi(x)>\max (f\circ \pi)_{\phi}(z)+\tilde{h}\circ \pi(z).$$
Again, since $\tilde{h}\circ\pi=0$ on $\Sigma\setminus \pi^{-1}(\pi(U_{z_{f}}))$, then the last inequality  holds for \\
$z\in\Theta_{1,f}^{s}\setminus \pi^{-1}(\pi(U_{z_{f}}))$, and therefore $\tilde{z}_{f}$ is the unique maximum point of $\max (f\circ \pi)_{\phi}+\tilde{h}\circ \pi$ on $\Theta_{1,f}^{s}$ and consequently, this proves the claim. 
\end{proof}
\noindent To finish the proof of lemma, note that the function
\[
f_1 = 
  \begin{cases}
      f+\tilde{h}, & \, \text{on} \,\,\pi(U_{z_f}), \\
      f & \text{otherwise}
  \end{cases}
\]
is $C^2$-close to $f$ with 
$$\max (f_1\circ \pi)_{\phi}=\max (f\circ \pi)_{\phi}+\tilde{h}\circ \pi\in H_{1}(\cR,\Theta_{1,f}^{s}).$$
Moreover,  as $\mathcal{A}_{\Theta_1, \beta/2}^{s}\subset\mathcal{C}_{\Theta_1,\beta/2}^{s}$, then by definition of $\mathcal{C}_{\Theta_1,\beta/2}^{s}$ (see Definition \ref{DLmax001}) and Remark \ref{R2-Paux} we have that $f_1 \in\widetilde{\mathcal{H}}_{\Theta,\beta}^{s}$ with the sub-horseshoe $\Theta_{1,f}^{s}$ and the Markov partition $R_{1,f}^{s}$, which concludes the  proof. 

\end{proof}
\begin{R}\label{Remark-principal-1} Note that the proof of \emph{Lemma \ref{Lmax4}}, also allows us to conclude that: If $\Theta$ and $\Gamma$ are two basic sets of $\cR$, then the set $\widetilde{\mathcal{H}}_{\Theta, \Gamma, \beta}\subset \widetilde{\mathcal{H}}_{\Theta,\beta}^{s}\cap \widetilde{\mathcal{H}}_{\Gamma,\beta}^{u}$, which satisfies
$$\max  (f\circ \pi)_{\phi}|_{\Sigma\cap (R_{f,\Theta}^{s}\cup R_{f,\Theta}^{u})}\in H_1(\cR,(\Theta_{f}^{s}\cup \Gamma_{f}^{u}))\subset C^{1}(\Sigma\cap (R_{f,\Theta}^{s}\cup R_{f,\Gamma}^{u}),\re)$$ 
is dense and $C^2$-open.\\
Here $R_{f,\Theta}^{s}$ and $R_{f,\Gamma}^{u}$ are the Markov partition of $\Theta_{f}^{s}$ and $\Gamma_{f}^{u}$ given by the definition of $\widetilde{\mathcal{H}}_{\Theta,\beta}^{s}$ and $ \widetilde{\mathcal{H}}_{\Gamma,\beta}^{u}$, respectively. \\
\indent Analogously, since \emph{Remark \ref{RDLmax0001}} $($compare with \emph{Remark \ref{Remark-principal-0}}$)$, it is easy to see that, if $\mathcal{V}$ is small enough, then  for any $g\in \mathcal{V}$  there is $\tilde{\beta}$ \emph{(}which depends on $g$\emph{)} such that 
\begin{equation}\label{Eq-aux}
\widetilde{\mathcal{H}}_{\Theta_g, \Gamma, \tilde{\beta}}=\widetilde{\mathcal{H}}_{\Theta, \Gamma, \beta},
\end{equation}
and consequently $\widetilde{\mathcal{H}}_{\Theta_g, \Gamma, \tilde{\beta}}$ is a dense and $C^2$-open set, where $\Theta_g$  is the hyperbolic continuation of $\Theta$  by $\cR^{g}$, respectively $($see  \emph{Section \ref{Dim. Red}}$)$.
\end{R}

\section{The Family of Perturbations of Metric}\label{Family of Pert}\label{Family}

\noindent Recall that in  Section \ref{HD>1} it was proven that there are a finite number of smooth-GCS, $\Sigma_{i}$ pairwise disjoint and  such that
the Poincar\'e map $\mathcal{R}$ (map of first return) of $\Sigma:=\bigcup^{l}_{i=1}\Sigma_{i}$ $$\cR \colon \Sigma\to \Sigma$$
satisfies:
\begin{itemize}
 \item $\bigcap_{n\in \mathbb{Z}}\mathcal{R}^{-n}(\Sigma):=\Delta$ is hyperbolic set for $\cR$.
\item $\ds HD\left(\Delta\right)\sim 2$.
\end{itemize}
\subsection{Nonzero Birkhoff Invariant and Property V}
 The family of perturbations of our metric will be special and involve some properties of horseshoes, as well of  regular Cantor sets. Since the geodesic flow is conservative, then Poincar\'e map $\cR$ is a conservative diffeomorphism (see \cite[Lemma 5.1]{R}), then to use some techniques at \cite{MY} and \cite{MR}, we need an appropriated family of perturbations of $\cR$. Thus, we perform a first conservative perturbation of $\cR$, such that the remain Poincar\'e map has  non-zero  Birkhoff invariant  in some periodic point  (cf. Appendix \ref{BI} and \cite[Section 4.3]{MY1} for more details). 

\begin{Le}\label{Le BI}
There is a small perturbation of the metric $g_0$ such that the Birkhoff invariant, for the new Poincar\' e map $($still denoted by $\cR$\emph{)}, is non-zero for some periodic orbit.
\end{Le}
\begin{proof}
We can assume that $\cR$ has a fixed point such that the Birkhoff invariant is zero. Then Lemma \ref{Birkhoff Invariant} implies that
the set $Q \subset J^{3}(0)$ of mappings having  non-zero Birkhoff invariant is open, dense, and invariant. Thus, the Klingenberg and Takens
theorem (cf. Appendix \ref{BI}) allows us to conclude the proof.
\end{proof}
\begin{R}\label{RBI}
As the perturbation of $g_0$ of \emph{Lemma \ref{Le BI}} is small enough, we can assume such perturbation is an element of $\mathcal{V}\subset$ {\boldmath$\mathcal{G}$}$^{3}(M)$. Moreover, from \emph{Lemma \ref{Le BI}} we can assume that, from now on,  the Poincar\'e map $\cR$  has the 
property that the Birkhoff invariant is non-zero for some periodic orbit. 
\end{R}
\subsubsection{Property V}\label{PropV}
In this section, we will try to expose a second and important property that we need to achieve a good perturbation of $\cR$. \ \\
\begin{Defi}
Let $K$, $K'$ be two regular Cantor sets, we say that $K$ and $K'$ have a stable intersection if there is a neighborhood $U$ of $(K,K')$ in the set of pairs in the $C^{1+ }$-regular Cantor sets such that 
$(\tilde{K}, \tilde{K'})\in V \Rightarrow \tilde{K}\cap \tilde{K'}\neq \emptyset$.
\end{Defi}
For more details on the definition of $C^{1+}$-topology of pairs of regular Cantor sets, see \cite[Section 2]{MY} or \cite[Chapter 2]{PT}.\\

\noindent Given a horseshoe $\Gamma$ associated to a $C^2$ bi-dimensional diffeomorphism $\psi$ , we consider $K^s$ and $K^u$ the stable and unstable Cantor sets associated to $\Gamma$ (see Section \ref{sec EMAH}).

\begin{Defi}
We say that a pair $(\psi, \Gamma)$, where $\Gamma$ is a horseshoe for $\psi$, has the property V if $K^s$ and $K^u$ have a stable intersection. 
\end{Defi}

\noindent For more details on property V, we recommend in \cite[Sections 1 and 2 ]{MY}.\\

When a horseshoe has Hausdorff dimension bigger than 1, then we get the property V, in fact:\begin{T}[\text{\cite[Moreira-Yoccoz]{MY1}}]Let $\varphi$ be a $C^\infty$ diffeomorphism on a surface $M$ with a horseshoe $\Gamma$ of $HD(\Gamma)>1$. If $\mathcal{U}$ is a sufficiently small neighborhood of $\varphi$ in $Diff^{\infty}(M)$,
there is an open and dense set $\mathcal{U}^* \subset \mathcal{U}$ such that, for every $\psi\in \mathcal{U}^*$ the pair $(\psi, \Gamma_\psi)$
has the property V, where $\Gamma_\psi$ is the hyperbolic perturbation of $\Gamma$ by $\psi$.
\end{T}
\ \\
\indent We recall  that $\Delta$ is a horseshoe for $\cR$ with $HD(\Delta)>1$, so the above theorem implies that we get the property V with small perturbations of $\cR$, but note that perturbations of $\cR$ non-necessarily are Poincar\'e maps of small perturbation of the metric $g_0$, in other words, we can not use directly the above theorem. So, to get the property V on the pair $(\cR^g, \Delta_g)$, for a small perturbation of $g_0$, we need to use the  construction of the proof of the above theorem. Thus, in the next sections, we will explain such a construction.
\subsection{Good Perturbations}\label{sec ISGF}
In this section, we will prove  Theorems \ref{Theorem 1} and \ref{Theorem 2}. 
The main difficulty for this goal  is to produce perturbations of  the Riemannian metric so that, the perturbation obtained for the Poincar\'e map $\cR$ has  independence, \emph{i.e.}, allows to obtain property $V$  (see Subsection \ref{PropV}).  
It is worth pointing out that, small perturbations of the Riemannian metric produce great perturbations on the geodesic flow. 
For example, if we want to perturb $\cR$ to obtain property  $V$ and still be an application of the first return of the geodesic flow for a Riemannian metric near the initial Riemannian metric, we must keep in mind that perturbing a metric in a small neighborhood $U\subset M$  always affect the Sasaki's metric on $\pi^{-1}(U)\subset SM$, where  $\pi\colon SM \to M$ is the canonical projection. In other words, local perturbations of the metric, in general, do not produce local perturbations of $\cR$.  Thus, the more complicated part of this section is to produce local perturbations of $(\cR, \Delta)$ with the property V perturbing the metric. 
\subsubsection{Independent Perturbations of the Metric}\label{sec PM}

In the sequel, we present the necessary tools to obtain the independence of the perturbations of the metric.  We start with the following lemma.

\begin{Le}\label{Lemma 2 ad}
For any fixed $\theta\in \Delta$, put $\tilde{\theta}=\phi^{\frac{t_{+}(\theta)}{2}}(\theta)$, then the surface $$\mathcal{W}_{\tilde{\theta}}:=\bigcup_{t\in (-t_{+}(\theta),t_{+}(\theta))}\phi^{t}(\pi^{-1}(\pi(\tilde{\theta})))$$ intersects transversely $\Sigma$, therefore $\mathcal{W}_{\tilde{\theta}}\cap \Sigma=\{\beta_1,\dots, \beta_l\}$ is a finite family of curves. Moreover, if $z\in\beta_{i}\cap \Delta\cap \Sigma$ for some $i\in \{1,\dots,l\}$, then 
$$\beta_i \pitchfork W^{s}(z,\Sigma) \ \ \text{and} \ \ \beta_i \pitchfork W^{u}(z,\Sigma).$$
\end{Le}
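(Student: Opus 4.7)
The plan is to work with the natural parameterization of $\eta_{\tilde{x}}$ and reduce everything to Lemma \ref{Lemma 1 ad}. First I would parameterize the fiber $\pi^{-1}(\pi(\tilde{x}))$ by arc length as $w\colon S^1 \to \pi^{-1}(\pi(\tilde{x}))$ and introduce $\Phi\colon S^1 \times (-t_{+}(x), t_{+}(x)) \to SM$ by $\Phi(s,t) = \phi^{t}(w(s))$, whose image is $\eta_{\tilde{x}}$. At $t=0$ the partial derivatives are $\partial_{s}\Phi = w'(s)$ (a unit vertical vector) and $\partial_{t}\Phi = \phi(w(s))$ (which satisfies $d\pi(\phi(w(s))) = w(s)\neq 0$, hence is nonzero and not vertical), so they are linearly independent. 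Since $D\phi^{t}$ is a linear isomorphism, independence persists for all $t$, and $\Phi$ is an immersion; thus $\eta_{\tilde{x}}$ is a smooth immersed surface of dimension $2$.

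For transversality with $\Xi$ I would argue: at any $y \in \eta_{\tilde{x}} \cap \Sigma$ with $\Sigma \in \Xi$, the tangent plane $T_{y}\eta_{\tilde{x}}$ contains $\phi(y)$, while $T_{y}\Sigma$ does not (because $\Sigma$ is a cross-section to $\phi$). Therefore $T_{y}\eta_{\tilde{x}} + T_{y}\Sigma \supseteq T_{y}\Sigma \oplus \langle\phi(y)\rangle = T_{y}SM$, and the intersection is transverse and locally a $1$-manifold. To get finiteness of the components, I would use that the return times to $\Xi$ are uniformly bounded below by some $\delta_{0}>0$ (from the tubular-neighborhood construction in Section \ref{SEC 1.1GCS}), so for each $s$ there are at most $2t_{+}(x)/\delta_{0}$ values of $t$ with $\phi^{t}(w(s)) \in \Xi$; this bounds the number of connected components of $\Phi^{-1}(\Xi)$, yielding the finite family $\beta_{1},\dots,\beta_{l}$.

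The core step is transversality of $\beta_{i}$ with the invariant foliations on $\Sigma$. Let $z = \Phi(s_{0},t_{0}) \in \beta_{i} \cap \Delta \cap \Sigma$ and write $u = w(s_{0})$. Then $T_{z}\beta_{i} = T_{z}\eta_{\tilde{x}} \cap T_{z}\Sigma$, which is spanned by $u_{\Sigma} := D\phi^{t_{0}}_{u}(w'(s_{0})) - c\,\phi(z)$ for the unique scalar $c$ making $u_{\Sigma} \in T_{z}\Sigma$ (in the splitting $T_{z}SM = T_{z}\Sigma \oplus \langle\phi(z)\rangle$). Since $\phi(z) \in E^{cs}_{z}$, the condition $u_{\Sigma} \in E^{s}_{\Sigma}(z) = E^{cs}_{z} \cap T_{z}\Sigma$ is equivalent to $D\phi^{t_{0}}_{u}(w'(s_{0})) \in E^{cs}_{z}$, which by $D\phi^{t_{0}}$-invariance of $E^{cs}$ is equivalent to $w'(s_{0}) \in E^{cs}_{u} = E^{ss}_{u} \oplus \langle\phi(u)\rangle$. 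But $w'(s_{0})$ is a nonzero vertical vector, so Lemma \ref{Lemma 1 ad} directly forbids this, giving $\beta_{i} \pitchfork W^{s}(z,\Sigma)$. Replacing $E^{cs}$ by $E^{cu}$ throughout, the same argument using Lemma \ref{Lemma 1 ad} yields $\beta_{i} \pitchfork W^{u}(z,\Sigma)$.

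The one mildly delicate point is the identification $T_{z}\beta_{i} = \operatorname{span}(u_{\Sigma})$ with $u_{\Sigma}$ obtained by projecting the pushed-forward vertical vector onto $T_{z}\Sigma$ along $\phi(z)$; once this is in hand, transversality becomes a purely linear-algebraic consequence of Lemma \ref{Lemma 1 ad} together with the invariance of $E^{cs}$ and $E^{cu}$, so no further hyperbolic-dynamics input is needed.
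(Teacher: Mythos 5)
Your proof is correct and follows essentially the same route as the paper: compute $T_{\phi^{t}(X)}\eta_{\tilde{x}}$ as the pushforward of a vertical vector plus the flow direction, observe $T_{z}W^{s/u}(z,\Sigma)=E^{cs/cu}_{z}\cap T_{z}\Sigma$, and reduce transversality to the fact (Lemma \ref{Lemma 1 ad}) that a nonzero vertical vector does not lie in $E^{ss}\oplus\langle\phi\rangle$ or $E^{uu}\oplus\langle\phi\rangle$, via invariance of the center-stable and center-unstable bundles. You spell out the projection coefficient $c$ and the finiteness of components more explicitly than the paper does, but the key ideas are identical.
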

\begin{proof}
For $(p,v)\in \pi^{-1}(\pi(\tilde{\theta}))$ and a non-zero vector $\xi\in T_{(p,v)}\pi^{-1}(\pi(\tilde{\theta}))$ (the tangent space of 
$\pi^{-1}(\pi(\tilde{\theta}))$ in $(p,v)$), then the tangent space to $\mathcal{W}_{\tilde{\theta}}$ in $\phi^{t}(\xi)$ is 
$$\ds T_{\phi^{t}((p,v))}\mathcal{W}_{\tilde{\theta}}=\text{span}\{d\phi^{t}_{(p,v)}(\xi), d\phi^{t}_{(p,v)}(\phi((p,v)))\}=\text{span}\{d\phi^{t}_{(p,v)}(\xi), \phi(\phi^{t}((p,v)))\}.$$
Therefore, since $\phi\pitchfork \Sigma$, then $\mathcal{W}_{\tilde{\theta}}\pitchfork\ \Sigma: =\{\beta_1,\dots, \beta_l\}$ a finite set of $C^1$-curves.

Let $z\in\beta_{i}\cap \Delta\cap \Sigma$ for some $i\in\{1, \dots, l\}$, then there is $\omega\in \pi^{-1}(\pi(\tilde{\theta}))$ such that $z=\phi^{t}(\omega)$, so
$$T_{z}\beta_{i}=T_{z}\mathcal{W}_{\tilde{\theta}}\pitchfork T_{z}\Sigma=\text{span}\{d\phi^{t}_{\omega}(\eta), \phi(\phi^{t}(\omega))\}\pitchfork T_{z}\Sigma,$$
where $\eta\in T_{\omega}\pi^{-1}(\pi(\tilde{\theta}))$ is a non-zero vector. \\
Remember that $T_{z}W^{i}(z,\Sigma)=(E^{i}(z)\oplus\text{span}(\phi(z)))\cap T_{z}\Sigma$ for $i=s,u$, then since $E^{i}$ is an invariant sub-bundle
for $d\phi^{t}$, $i=s,u$, it follows that $T_{z}W^{i}(z,\Sigma)=(d\phi^{t}_{\omega}(E^{i}(\omega))\oplus \text{span}\{\phi(\phi^{t}(\omega))\})\pitchfork T_{z}\Sigma$. Therefore, as $\eta$ is a vertical vector, Lemma \ref{Lemma 1 ad} implies that  $d\phi^{t}_{\omega}(\eta)\notin (d\phi^{t}_{\omega}(E^{i}(\omega))\oplus \text{span}\{\phi(\phi^{t}(\omega))\}$, $i=s,u$, which completes the proof of the lemma. 
\end{proof}
\begin{Defi}
Let $\{\theta, \tilde{\theta}\}$ two words and $\mathfrak{R}=\{R_1, R_2,\dots, R_k\} $ a set of words of the same alphabet $\mathcal{B}$. We say that the word $\theta\tilde{\theta}$ is \emph{prohibited} by $\mathfrak{R}$ if there is $R\in \mathfrak{R}$ inside $($as a factor of$)$ $\theta\tilde{\theta}$.
$$\overbrace{-----}^{\alpha}\overbrace{-----------}^{R}\overbrace{-----}^{\beta}$$
$$\underbrace{----------}_{\theta} \underbrace{-----------}_{\tilde{\theta}}.$$
\end{Defi}

\begin{Defi}\label{def no dist}
Let $\Theta_1$, $\Theta_2$ be two disjoint sub-horseshoes of $\Delta$ and a Markov Partition $\mathfrak{R}_i$ of $\Theta_i$, $i=1,2$. We say that $R_1\in \mathfrak{R}_1$ disturbs $R_2\in \mathfrak{R}_2$ if 
$$T_{R_2}\cap \tau_{R^{1/2}_1}\neq\emptyset,$$
where $T_{R_2}=\{\phi^{t}(x):x\in R_2,\ \, 0\leq t \leq t_{+}(x)\}$, $\tau_{R^{1/2}_1}=\pi^{-1}(\pi(R^{1/2}_1))$ and
	$R^{1/2}_1=\phi^{{\bar{t}_1}/2}(R_1)$ with $\bar{t}_1=\ds \sup_{x\in R_1\cap \Delta}t_{+}(x)$. 
We say that $\Theta_1$ has no interference on $\Theta_2$ if no element of $\mathfrak{R}_1$ disturbs any element of $\mathfrak{R}_2$.
\end{Defi}

The main goal of this section is to prove the following lemma, which will give us the region where we can perturb the metric independently.

\begin{Le}\label{main-lemma}
There are two disjoint sub-horseshoe $\Delta_2$ and $\Delta_3$ of $\Delta$ such that $\Delta_2$ has {no interference} on $\Delta_3$ and $HD(K_3^s)+HD(K_2^u)>1$, where $K_3^s$ is the stable Cantor set of $\Delta_3$  and $K_2^u$ is the unstable Cantor set of $\Delta_2$.
\end{Le}

The remainder of this section is dedicated to proving Lemma \ref{main-lemma}. For this sake, we  introduce some auxiliary lemmas which will
help reach this goal. 
\begin{Le}\label{ind. of horseshoe}
Let $\Delta_{1}\subset \Delta$ a sub-horseshoe with $0<HD(\Delta_1)=:\lambda<\frac{1}{2}$, then there exists another sub-horseshoe $\Delta_2\subset\Delta$ with the following properties:

\begin{enumerate}
    \item[\emph{(1)}]  $HD(K_2^{u})$ is sufficiently  close $HD(K^{u})$, where $K_{2}^{u}$ and $K^{u}$ are the unstable Cantor sets associated with $\Delta_{2}$  and $\Delta$, respectively; 
	\item[\emph{(2)}]  $\Delta_{1}\cap\Delta_2=\emptyset$;

	\item[\emph{(3)}]  $\Delta_2$ has no interference on $\Delta_1$. 

\end{enumerate}
\end{Le}
\begin{proof}
Consider a Markov partition $\mathfrak{R}$ of $\Delta$ into squares of size $\epsilon$. Then $\mathfrak{R}$ has approximately
$\epsilon^{-d}$\ squares, where $d=HD(\Delta)$. Consider, analogously, a set $\widetilde{\mathfrak{R}}_{\Delta_1}\subset \mathfrak{R}$ of the order of $\epsilon^{-\lambda}$ squares of size $\epsilon$ forming a Markov partition of $\Delta_1$. 
Observe also that, given $p\in \Delta_1$ belonging to a cross-section $\Sigma_i$, the projection by the flow $\phi^t$ of the fiber $\pi^{-1}(\pi(\phi^{t_{+}(p)/2}(p)))$ is a curve (or a finite union of curves), and so, as in the proof of Lemma \ref{LIC} (\cite[Lemma 4.6]{R}), each square of $\widetilde{\mathfrak{R}}_{\Delta_1}$ has interference on at most of the order of $\epsilon^{-d/2}$ (which is much smaller than  $\epsilon^{-1}$) squares of $\mathfrak{R}$. Thus, the squares of $\widetilde{\mathfrak{R}}_{\Delta_1}$ have interference on at most $\epsilon^{-\lambda}\cdot\epsilon^{-1}\leq\epsilon^{-3/2}$ squares of $\mathfrak{R}$. We call $\mathfrak{X}\supset \widetilde{\mathfrak{R}}_{\Delta_1}$ the set of squares that suffer interference of some square of $\widetilde{\mathfrak{R}}_{\Delta_1}$. 
Therefore, we have $\tilde N:=\#\{\mathfrak{R}\setminus \mathfrak{X}\} \geq \epsilon^{-d}-\epsilon^{-3/2}$ squares of $\mathfrak{R}$ which do not suffer interference of any square of $\widetilde{\mathfrak{R}}_{\Delta_1}$.
\ \\ 
\indent The maximal invariant set by $\cR$ of the union of these $\tilde N$ remaining squares in $\mathfrak{R}\setminus \mathfrak{X}$,
which will be the sub-horseshoe $\Delta_2'\subset \Delta$. By the above construction, this sub-horseshoe $\Delta_2'$ satisfies
conditions $2$ and $3$ of the lemma. In what follows, we will estimate the size of $\Delta_2'$.\\
\indent Let $\{\tilde{\theta}_1,\dots,\tilde{\theta}_{\tilde N}\}$ be the words associated with the remaining squares. They
generate intervals of the length of the order of $\epsilon^2$ in $W^{u}(\Delta)$ (of the construction of the unstable  regular Cantor set).
Without loss of generality, we can assume, {as in \cite[Remark 13]{R}}, that the transitions $\tilde{\theta}_i\tilde{\theta}_j$ are
admissible for all $i,j\in \{1,\dots,{\tilde N}\}$.\\
\ \\
\textbf{Claim:} $\#\{\tilde{\theta}_i\tilde{\theta}_j \, \, \, \text{prohibited by}\, \,  \mathfrak{X}\}=O(\tilde{N}^2)$.
\begin{proof}[\emph{\textbf{Proof of Claim}}]

\noindent Since the product of the lengths of the intervals in $W^{u}(\Delta)$ generated by the words $\alpha$ and $\beta$ is of the order
of $\epsilon^4/\epsilon^2=\epsilon^2$, the number of possibilities for the word $\#\{\alpha\beta\}$ is of the order of $\tilde N$. On the other hand, the size of each word  $\tilde{\theta}_i$ (which gives an upper bound for the number of positions where the word $R$ begins) is of the order of $\log\epsilon^{-d}$, which is of the order of $\log {\tilde N}$. Then
each word $R$ corresponding to a square in $\mathfrak{X}$ prohibits $O(\tilde N\log {\tilde N})$ transitions. So we have in total 
$O(\epsilon^{-3/2}\tilde N\log {\tilde N})$ prohibited transitions $\tilde{\theta}_i\tilde{\theta}_j$. Since $d>3/2$ and $\tilde N$ is of
the order of $\epsilon^{-d}$, we have $\epsilon^{-3/2}\tilde N\log {\tilde N}=o({\tilde N}^2)$.
\end{proof}
This shows that the number of prohibited transitions 
is much smaller than the total number of transitions. So, consider the following matrix $A$ for $i,j\in\{1,\dots,\tilde N\}$
\[ a_{ij} = \left\{ \begin{array}{lll}
         1 & \mbox{if $\tilde{\theta}_{i}\tilde{\theta}_{j} \ \text{is not prohibited}$};\\
          & \\
       0 & \mbox{if $\tilde{\theta}_i\tilde{\theta}_j \ \text{is prohibited by} \ 
        \mathfrak{X}. $}\end{array} \right. \] 
The above claim states that $\#\{a_{ij}:a_{ij}=1\}\geq\frac{99}{100}{\tilde N}^{2}$, so from \cite[Remark 13 ]{R}, there is a sub-horseshoe $\Delta_2$ of $\Delta_2'$ that satisfies condition (1). Also, as $\Delta_2'\subset \Delta_2$, then $\Delta_2'$ also satisfies properties (2) and (3).


\end{proof}

 Notice that  the sub-horseshoe $\Delta_1\subset\Delta$ in Lemma \ref{ind. of horseshoe} with $0<HD(\Delta_1)<\frac{1}{2}$ can be taken such that 
\begin{equation}\label{E4'SGF}
HD(K_1^{s})\sim 1/4,
\end{equation} where $K_1^{s}$ is the regular stable Cantor set associated to $\Delta_1$.
\noindent Now we construct a family of sufficiently independent perturbations of $\cR$ in a neighborhood of a suitable sub-horseshoe of
$\Delta_1$.
Fix $n\in\mathbb{N}$ large and let $\mathcal{SI}^{n}_{g_0}$ be as in Subsection \ref{sec GCSI} (see Remark \ref{R1-Paux}). Since the
transversality condition is open and dense, then we can suppose that the $\Sigma\pitchfork \mathcal{SI}^{n}_{g_0}$ So,  $\alpha_{n}:=\Sigma\pitchfork \mathcal{SI}^{n}_{g_0}$ is a finite family of smooth curves and from Lemma \ref{LIC} applied to the family of curves $\alpha_{n}$ and the sub-horseshoe $\Delta_1$ (compare with Remark \ref{R1-Paux}), we have that given $\tilde{\delta}>0$ there is a sub-horseshoe $\Delta_{0}$ of $\Delta_1$ such that $\Delta_{0}\cap \alpha=\emptyset$ and satisfies that $\tilde{\delta}$-stable property (see Definition \ref{s(u) beta property}), that is,

\begin{equation}\label{E4SGF}
HD(K_{0}^{s})\geq HD(K_{1}^s)-\tilde{\delta},
\end{equation}

\noindent where $K_{0}^{s}$ and  $K_{1}^{s}$ are the stable   Cantor sets associated to $\Delta_{0}$ and $\Delta_1$,  respectively. \\
\indent Moreover, as the set of periodic points of $\mathcal{R}$ of period smaller than $n$ in $\Delta_1$ is finite, we may also assume that $\Delta_0$ does not contain any periodic point of period smaller than $n$. In short, $t_{+}(\theta)\geq n$, $\theta\in \Delta_0$. 

\begin{R}\label{R2SGF}
Given a positive integer $n$, large enough, 
choose a Markov partition $\mathfrak{R}_{0}$ of $\Delta_0$ such that for each $R_{a}\in \mathfrak{R}_{0}$ there is a neighborhood $U_a$ of $R_a$ with the property 
{$\phi^t(U_{a})\cap (\tau_{U^{1/2}_{a}}\setminus U_a^{1/2})=\emptyset$} for 
$$\inf_{x\in\overline{U}_{a}}t_0(x)<t\leq \sup_{x\in\overline{U}_{a}}\sum_{i=0}^n t_i(x) \,\,\, \text{or} \,\, 
-\sup_{\theta\in\overline{U}_{a}}\sum_{i=0}^n t_i(\theta)\leq t<0,$$ where $t_{i}(\theta)=t_{+}(\cR^{i}(\theta))$ and 
$\tau_{U^{1/2}_a}=\pi^{-1}(\pi(U^{1/2}_a)))$. In particular, $\cR^{r}(U_{a})\cap \tau_{U^{1/2}_{a}}=\emptyset$ for $0<r\leq n$. 
\end{R}
The above Remark will be essential to count the number of prohibited transitions of a suitable Markov partition of $\Delta_0$ (see proof of Lemma \ref{L3SGF}).\\
\ \\
Let $\mathfrak{R}_{0}=\{R_1,\dots, R_N\}$ be a Markov partition by squares of size $\epsilon^{1/2}$ of $\Delta_{0}$ as in Remark
\ref{R2SGF}. Note that since $\cR$ is conservative, then each square of this Markov partition $\mathfrak{R}_{0}$, corresponds to an interval 
of the size of the order of $\epsilon$ in $W^{s}(\Delta_0)$ (of the constructions of the stabler Cantor set of $\Delta_0$): there is an iterate
of the square which is a strip in the unstable direction, whose basis is this interval. We call $X:=\{\theta_1,\dots,\theta_{N}\}$ the set 
of words associated with the intervals corresponding to the squares of $\mathfrak{R}_{0}$ in $W^{s}(\Delta_0)$. Without loss of generality
(by considering, if necessary, a suitable sub-horseshoe with almost the same dimension), we can assume, as in Remark 13 at \cite{R}, that 
the transitions $\theta_i\theta_j$ are all admissible. Remember that  $\theta_i$ \emph{disturbs} in $\theta_j$ if $i\ne j$ and 
$T_{R(\theta_j)}\cap\tau_{R(\theta_{i})^{1/2}}\neq \emptyset$, where $R(\theta_{i})$ is the square associated to the word $\theta_i$. 
Consider $P_{\theta_{i}}:=\{\theta_{j}:\theta_i \ \text{ disturbs } \ \theta_j\}=\{\theta_{r_{1}(i)},\cdots,\theta_{r_{p_{i}}(i)}\}$, and  similar to the proof of Lemma \ref{ind. of horseshoe}, $|P_{\theta_i}|=O(N^{1/2})$. 




\begin{Le}\label{L3SGF}
Given a constant $\delta \in (0,1)$, there is a positive integer $n$ such that, if $\Delta_0$ is a sub-horseshoe of $\Delta_1$ as in 
\emph{Remark \ref{R2SGF}} then, for any $i\le N$, 
$$\#\{\theta_i\theta_j\, \, \text{or}\, \, \theta_{j}\theta_i \, \, \text{prohibited by}\,\,  P_{\theta_i}, \, \,  j\leq N \}\leq \delta N.$$
\end{Le}
\begin{proof}
Let us  consider transitions of the type $\theta_i\theta_j$. Write $\theta_i=\alpha \beta \gamma$ such that $\alpha$ is associated to
an interval of the size of the order $\epsilon^{1/2}$ and, $\beta$ and $\gamma$ are associated to intervals of sizes of order
$\epsilon^{1/4}$ in $W^s(\Delta_0)$. Also, let $\alpha=s_1 s_2 \dots s_r$, $\beta=s_1' s_2' \dots s_t'$, $\gamma=s_1''s_2'',\dots s_u''$, and
$P_{\theta_{i}}=\{\theta_{r_{1}(i)},\cdots,\theta_{r_{p_{i}}(i)}\}$. If $\theta_{i}$ prohibits the transition $\theta_{i}\theta_j$ then there exists a word $\theta_{r_{l}(i)}\in P_{\theta_{i}}$ inside (as a factor of) $\theta_i\theta_j$. 

Let us first show that the word $\theta_{r_{l}(i)}$ cannot begin too close from the beginning of $\theta_i$ itself. More precisely, if
it begins with a letter $s_k$ of $\alpha$ then we should have $k>n$. Indeed, if $k\le n$ then the square $R_i$ of the Markov partition
$\mathfrak{R}_{0}$ corresponding to $\theta_i$ is such that ${\cR}^{k-1}(R_i)$ intersects $R_{r_{l}(i)}$ (notice that $k>1$ since, by
definition, $r_{l}(i)\ne i$). Since, for some $(x,v)\in R_{r_{l}(i)}$, there is $0\le t\le t_{+}(x,v)$ such that 
$\phi^t(R_{r_{l}(i)}) \cap \tau_{R^{1/2}_i} \ne \emptyset$. If we take $(y,w)\in R_i$ and $\tilde t=\sum_{i=0}^{k-2} t_i(y,w)$ 
so that ${\cR}^{k-1}(y,w)=\phi^{\tilde t}(y,w)$,  then $\phi^{\tilde t+t}(U_i)\cap \tau_{U^{1/2}_i} \ne \emptyset$, which is a contradiction
with the stated in Remark \ref{R2SGF}. Now we consider three possibilities:

$\bullet$ Assume that a word $\theta_{r_{l}(i)}$ begins with the letter $s_k$ of $\alpha$ (with $k>n$).\\ 
Then, if $\tilde \alpha$ is the factor of 
$\theta_i$  beginning by the letter $s_k$ of $\alpha$ (and also an initial factor of $\theta_{r_{l}(i)}$) associated to an interval of the
size of the order $\epsilon^{1/2}$. The square associated with the word $\theta_{r_{l}(i)}$ belongs to a strip in the unstable direction 
corresponding to the interval (of the size of the order $\epsilon^{1/2}$) in $W^{s}(\Delta_0)$ associated with the word $\tilde \alpha$. 
The previous Lemma implies that there exists a constant $\tilde C>0$ (which depends on the transversality constants in the previous Lemma, 
but is independent of $\epsilon$) such that $\theta_i$ disturbs at most $\tilde C$ squares in this strip. So, given $k$ in this situation, 
there are at most $\tilde C$ possibilities for $\theta_{r_{l}(i)}$. For each such word, the largest part of it will be a factor of 
$\theta_i$, and the remaining will be an initial factor $\hat \alpha$ of $\theta_j$. Let $m'$ be the minimum size of $\hat \alpha$, then
$m'$ is of order  $m$. There is a positive constant $\lambda<1$ (hyperbolicity constant for $\Delta$) such that, for each
$q\ge m'$, if the size of $\hat \alpha$ is $q$, then the number of words $\theta_j$ beginning by $\hat \alpha$ is at most ${\lambda}^q N$. 
Therefore, the number of prohibited transitions $\theta_{i}\theta_j$ in this situation is at most
$$N\cdot \sum_{q\geq m'}\tilde C \cdot \lambda^{q}=\frac{\tilde C N \lambda^{m'}}{1-\lambda}<\frac{\delta N}4.$$

$\bullet$ Assume that a word $\theta_{r_{l}(i)}$ begins with the letter $s'_k$ of $\beta$. \\
Then, if $\tilde \beta$ is the factor of $\theta_i$  beginning with the letter $s'_k$ of $\beta$ (and also an initial factor of $\theta_{r_{l}(i)}$) associated to an interval of the
size of the order of $\epsilon^{1/4}$, the square associated with the word $\theta_{r_{l}(i)}$ belongs to a strip in the unstable direction
corresponding to the interval (of the size of the order of $\epsilon^{1/4}$) in $W^{s}(\Delta_0)$ associated to the word $\tilde \beta$. Since 
the number of intervals of the construction of in $W^{s}(\Delta_0)$ whose sizes are of order $\epsilon^{1/2}$ contained in an
interval of size $\epsilon^{1/4}$ is at most of order $N^{1/4}$. Then, by the discussion of the previous step, the number of squares in 
this strip that are disturbed by $\theta_i$ is at most of the order of $N^{1/4}$. So, given $k$ in this situation, there are at most
$N^{1/4}$ possibilities for $\theta_{r_{l}(i)}$. For each such word, a part of it will be a final factor of $\theta_i$, and the remaining
will be an initial factor $\hat \alpha$ of $\theta_j$, which corresponds to an interval of size at most of the order $\epsilon^{1/2}$. So,
the number of words $\theta_j$ beginning by $\hat \alpha$ is at most of the order of $N^{1/2}$. Since the number of letters in $\beta$ is
of the order $\log N$, the number of prohibited transitions $\theta_{i}\theta_j$ in this situation is $O(\log N \cdot N^{3/4})=o(N)$.\\
\indent $\bullet$ Assume that a  word $\theta_{r_{l}(i)}$ begins with the letter   $s_k''$ of $\gamma$. \\
In this case, a part of the word $\theta_{r_{l}(i)}$ will be a final factor of
$\theta_i$, and the remaining will be an initial factor $\hat \alpha$ of $\theta_j$, which corresponds to an interval of size at most of
order $\epsilon^{3/4}$. So, the number of words $\theta_j$ beginning by $\hat \alpha$ is at most of order $N^{1/4}$. Since
$|P_{\theta_i}|=O(N^{1/2})$, the number of prohibited transitions $\theta_{i}\theta_j$ in this situation is $O(N^{3/4})=o(N)$. Thus, the
total number of prohibited transitions $\theta_{i}\theta_j$ is at most
$$\frac{\delta N}4+o(N)<\frac{\delta N}2.$$
In any case, the total of prohibited transitions is smaller than $\delta N/2$, which implies the result.\\
{We will only give some details of the argument corresponding to the first step: we show that the word $\theta_{r_{l}(i)}$ cannot end too close to the end of $\theta_i$: it should end at least $n$ letters before
it. Indeed, if it ends $k$ letters before the end of $\theta_i$, and $k<m$ then the square $R_{r_{l}(i)}$ of the Markov partition 
$\mathfrak{R}_{0}$ corresponding to $\theta_{r_{l}(i)}$ is such that ${\cR}^{k}(R_{r_{l}(i)})$ intersects $R_i$ (notice that $k>0$ since, 
by definition, $r_{l}(i)\ne i$). Since, for some $(x,v)\in R_{r_{l}(i)}$, there is $0\le t\le t_{+}(x,v)$ such that 
$\phi^t(R_{r_{l}(i)}) \cap \tau_{R^{1/2}_i} \ne \emptyset$. If we take $(y,v)\in R_{r_{l}(i)}$ and $\tilde t=\sum_{i=0}^{k-1} t_i(y,v)$ 
(so that ${\cR}^{k}(y,w)=\phi^{\tilde t}(y,w)$), then $\phi^{t-\tilde t}(U_i)\cap \tau_{U^{1/2}_i} \ne \emptyset$. Thus, we come to a contradiction with what was stated in Remark \ref{R2SGF}.}
\end{proof}

Now we will perform a probabilistic construction. First, fix a parameter $\alpha$ with\break $1/4<\alpha<1/2$. Then, we can state 
the following lemma.
\begin{Le}\label{L4SGF}
Let $f\colon \{1,\dots,\lfloor N^{\alpha} \rfloor\} \to X=\{\theta_1,\dots,\theta_N\}$ a random function \emph{(}i.e., each value $f(i)$ is chosen randomly, with the uniform distribution, and independently from the other\emph{)}. Then $f$ is injective with probability $1-O_{N}(1)$.
\end{Le}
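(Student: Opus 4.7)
The plan is to apply a straightforward union-bound (birthday problem) argument. I would first note that non-injectivity of $f$ means that there exists some pair of indices $i \ne j$ in $\{1,\dots,\lfloor N^{\alpha}\rfloor\}$ with $f(i)=f(j)$. So I would estimate
\[
\mathbb{P}(f\text{ not injective}) \;\le\; \sum_{1\le i<j\le\lfloor N^{\alpha}\rfloor}\mathbb{P}\bigl(f(i)=f(j)\bigr).
\]
Since the values $f(i)$ are independent and uniformly distributed on a set of cardinality $N$, for each fixed $j$ we have $\mathbb{P}(f(i)=f(j))=1/N$ (conditioning on $f(j)$ and then using uniformity of $f(i)$).

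Combining, I would get
\[
\mathbb{P}(f\text{ not injective}) \;\le\; \binom{\lfloor N^{\alpha}\rfloor}{2}\cdot\frac{1}{N} \;\le\; \frac{N^{2\alpha}}{2N} \;=\; \frac{1}{2}\,N^{2\alpha-1}.
\]
Since by hypothesis $1/4<\alpha<1/2$, the exponent $2\alpha-1$ is strictly negative, so this bound tends to $0$ as $N\to\infty$. Therefore $\mathbb{P}(f\text{ injective})\ge 1-\tfrac12 N^{2\alpha-1}=1-o_{N}(1)$, which is exactly the claim (with the understanding that the statement ``$1-O_N(1)$'' means a probability which tends to $1$ as $N\to\infty$).

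There is essentially no obstacle here; the lemma is a purely probabilistic cushion that will later allow one to assume the randomly chosen sample of words $f(1),\dots,f(\lfloor N^{\alpha}\rfloor)$ consists of distinct elements of $X$, which is needed to use Lemma \ref{L3SGF} to bound the number of prohibited transitions inside the sampled subset. The only mild point to keep track of is that the estimate is uniform in the choice of $f$, so that after conditioning on injectivity one still has the uniform distribution on injective functions, which will be used in the subsequent combinatorial argument.
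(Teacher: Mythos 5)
Your union-bound argument gives exactly the same final estimate $1-\tfrac12 N^{2\alpha-1}$ as the paper, which instead computes the exact injectivity probability $\prod_{j=0}^{\lfloor N^{\alpha}\rfloor-1}(1-j/N)$ and applies the Bernoulli-type inequality $\prod(1-x_j)\ge 1-\sum x_j$; the two routes are standard, essentially equivalent birthday-problem estimates. Your proposal is correct and matches the paper's argument in substance.
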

\begin{proof}
The total number of functions $f$ is $N^{\lfloor N^{\alpha} \rfloor}$. The number of injective functions among them is
$$\frac{N!}{(N-\lfloor N^{\alpha} \rfloor)!}=\prod_{j=0}^{\lfloor N^{\alpha} \rfloor-1}(N-j).$$
So, the desired probability is
$$\frac{1}{N^{\lfloor N^{\alpha} \rfloor}}\prod_{j=0}^{\lfloor N^{\alpha} \rfloor-1}(N-j)=\prod_{j=0}^{\lfloor N^{\alpha} \rfloor-1}(1-\frac jN)\ge 1-\frac{\sum_{j=0}^{\lfloor N^{\alpha} \rfloor-1}j}N\ge 1-\frac{(N^{\alpha})^2}{2N}=1-\frac 1{2N^{1-2\alpha}}=1-O_{N}(1).$$
\end{proof}
\noindent Given three indices $i, j, k\le \lfloor N^{\alpha} \rfloor$ with $j\ne k$, we will estimate the probability that given a random 
function $f\colon \{1,\dots,\lfloor N^{\alpha}\rfloor\} \to X$, $f(j)f(k)$ is prohibited by $f(i)$.

We have two cases:

\noindent i)\, From Lemma \ref{L3SGF}, if $i\in \{j, k\}$, the above probability is at most $\delta$.\\
\ \\
ii) \, If $i\notin \{j, k\}$, $i\in \{1,\dots,[N^{\alpha}]\}$, assume that $f(i)$ prohibits $\theta_j\theta_k$. Then the situation is 
as in the following diagram, where we have two representations of the same word:
$$\overbrace{-----}^{\alpha}\overbrace{-----------}^{f(i)}\overbrace{-----}^{\beta}$$
$$\underbrace{----------}_{\theta_j} \underbrace{-----------}_{\theta_k}$$
The number of possibilities for the pair $(\alpha,\beta)$ is $O(N\log N)$ and so, since $|P_{f(i)}|=O(N^{1/2})$, we have 
$$\#\{\theta_j\theta_k:\theta_j\theta_k \ \text{is prohibited by} \ f(i)\}=O(N^{1/2}\cdot N \log N).$$
Therefore, the probability $P_1$ that the transition $f(j)f(k)$ is prohibited by $f(i)$ is 
\begin{equation}\label{E8SGF}
 P_1 = \frac{O(N^{1/2}N\log N)}{N^2}=O(N^{-1/2}\log N).
\end{equation}

The previous estimates imply that, since $\alpha<1/2$, the expected number of prohibited transitions is at most
$$2\delta{\lfloor N^{\alpha}\rfloor}^2+{\lfloor N^{\alpha}\rfloor}^3\cdot O(N^{-1/2}\log N)
=2\delta \lfloor N^\alpha\rfloor^{2}+O(N^{3\alpha-1/2})<3\delta{\lfloor N^{\alpha}\rfloor}^2.$$

Given a function $f$ as Lemma \ref{L4SGF}, we put $\theta_i=f(i)$. It follows that the probability $P$ such that the number of prohibited transitions $\theta_j \theta_k$ with $j\ne k$ is
$\ge 4\delta{\lfloor N^{\alpha}\rfloor}^2$  satisfies  $P\le 3/4$.

\begin{proof}[\textbf{\emph{Proof of Lemma \ref{main-lemma}}}] With the same notation as the last paragraphs.
We consider $\ds A=(a_{ij})$ for $i,j\in\{1,\dots,\lfloor N^{\alpha} \rfloor\}$ the matrix defined by 
\begin{equation}\label{E_Proh}
a_{ij}=
\begin{cases}
1 \ \ \ \ \text{if}\, \, \theta_i\theta_j \ \ \text{is not prohibited}; \cr
\ \cr 
0 \ \ \ \ \text{if} \ \ \theta_i\theta_j \ \text{is prohibited by some} \ 
       \theta_k\in \ \text{Im}f. \cr
\end{cases}
\end{equation}
       
Then, with probability at least $1/4$, $\#\{(i,j): i,j\in\{1,\dots,\lfloor N^{\alpha} \rfloor\} \, \text{with}\,\, a_{ij}=0\}$ is at most $\lfloor N^{\alpha} \rfloor+4\delta{\lfloor N^{\alpha}\rfloor}^2<5\delta{\lfloor N^{\alpha}\rfloor}^2$ (for $N$ large). We assume that $f$  is injective  and that $\delta<1/500$.

Define  $\overline{K}$ the regular Cantor set 
$$\overline{K}:=\{\theta_{i_1} \theta_{i_2}\cdots \theta_{i_n}\cdots | \,\, a_{i_k i_{k+1}}=1,\forall k\ge 1\}\subset K_0^{s}.$$       
By the previous discussion we have $\#\{a_{ij}:a_{ij}=1\}\geq\frac{99}{100}(\lfloor N^{\alpha} \rfloor)^{2}$, so by \cite[Remark 13]{R} we have 
$$HD(\overline{K})\sim \frac{\log \lfloor N^{\alpha} \rfloor}{-\log\epsilon}\sim \alpha HD(K^s_{0}).$$

\noindent Consider the sub-horseshoe of $\Delta_{0}$ defined by 
\begin{equation}\label{Cons of Delta3} 
\Delta_3:=\bigcap_{n\in\mathbb{Z}}\cR^{n}\left(\bigcup_{i,j\le {\lfloor N^{\alpha}\rfloor}, a_{ij}=1} (R(\theta_i)\cap {\cR}^{-1}(R(\theta_j))\right),
\end{equation}
where $R(\theta_i)$ is the square associated to the word $\theta_i$.\\
Since the stable regular Cantor set $K_3^s$ of $\Delta_3$  is equal to $\overline{K}$, by the above discussion we have 
\begin{equation}\label{E9SGF}
HD(K_3^s)\sim \alpha HD(K^s_0).
\end{equation}

\noindent As $HD(\Delta)\sim 2$, then $HD(K^{u})\sim 1$, then by Lemma \ref{ind. of horseshoe} the sub-horseshoe $\Delta_2$ satisfies that 
$\Delta_2\cap \Delta_3=\emptyset$ since $\Delta_3\subset \Delta_1$, and $HD(K^{u}_2)\sim  HD(K^{u})\sim1$. Thus, from 
(\ref{E4'SGF}), (\ref{E4SGF}), and (\ref{E9SGF}) we arrive at $HD(K_3^s)\sim \alpha \frac{1}{4}$. 
Therefore, since $\alpha$ can be taken equal to $\frac{1}{2}-4\epsilon$, with small $\epsilon>0$,
then $HD(K_3^s)\sim \frac{1}{8}-\epsilon$. Thus $$HD({K}^{u}_2)+HD(K_3^s)>1.$$
Therefore, this concludes the proof of the lemma. 
\end{proof}

\subsubsection{Getting the Property V}\label{ProV}
We use sub-horseshoes $\Delta_2$ and $\Delta_3$ given by Lemma \ref{main-lemma} to perform a perturbation of the metric $g_{_{0}}$ to obtain the property $V$ (cf. Section \ref{PropV} and \cite[pp. 19-20]{MY1}).\\
From Remark \ref{Remark-principal-1},  we describe some important properties of the sets ${\mathcal{H}}_{\Delta_3, \Delta_2, \beta}$ and $\widetilde{\mathcal{H}}_{\Delta_3, \Delta_2, \beta}$.
\paragraph{ Properties of ${\mathcal{H}}_{\Delta_3, \Delta_2, \beta}$ and $\widetilde{\mathcal{H}}_{\Delta_3, \Delta_2, \beta}$}\label{Prop.H}
By definition $\widetilde{\mathcal{H}}_{\Delta_3, \Delta_2, \beta}=\{f\in C^2(M,\re): f\circ \pi \in {\mathcal{H}}_{\Delta_3, \Delta_2, \beta}\}$. Given $F\in {\mathcal{H}}_{\Delta_3, \Delta_2, \beta}$ or $f\in \widetilde{\mathcal{H}}_{\Delta_3, \Delta_2,\beta}$, we  denote by $G$, the function $F$ or $f\circ \pi$. Then 
\begin{itemize}
\item[{(i)}] There is a sub-horseshoe $(\Delta_3)_{_{G}}^{s}$ of $\Delta_3$ and a sub-horseshoe $(\Delta_2)_{_{G}}^{u}$ of $\Delta_2$ that satisfies the $\beta$-stable (unstable) property, respectively. 
\item[{(ii)}] There is a Markov partition $R_{G, \Delta_3}^{s}$ of 
$(\Delta_3)_{_{G}}^{s}$, and a Markov partition $R_{G, \Delta_2}^{u}$ of $(\Delta_2)_{_{G}}^{u}$
$$\max  (G)_{\phi}|_{\Sigma\cap(R_{F, \Delta_3}^{^{s}}\cup R_{F, \Delta_2}^{^{u}})}\in H_1(\cR,(\Delta_3)_{_{G}}^{s}\cup(\Delta_2)_{_{G}}^{u})\subset C^{1}(\Sigma\cap (R_{G, \Delta_3}^{^{s}}\cup R_{G, \Delta_2}^{^{u}}),\re).$$
\end{itemize}



Note that $\Delta_2\cap \Delta_3=\emptyset$, $\Delta_2$ has no interference of $\Delta_3$,  and $HD({K}^{u}_2)+HD(K_3^s)>1$, then if $\beta$ is small enough we have that 
\begin{itemize}
\item[(iii)] 
$(\Delta_3)_{_{G}}^{s}\cap (\Delta_2)_{_{G}}^{u}=\emptyset$, $(\Delta_2)_{_{G}}^{u}$ has no interference of $(\Delta_3)_{_{G}}^{s}$,  and 
$$HD(K_{3,G}^{s})+HD(K_{2,G}^{u})>1,$$
where $K_{3,G}^{s}$ is the stable Cantor set of $(\Delta_3)_{_{G}}^{s}$ and $K_{2,G}^{u}$ is the unstable Cantor set of $(\Delta_2)_{_{G}}^{s}$.
\end{itemize}
{\noindent When $G=f\circ \pi$, we denote $(\Delta_3)_{_{G}}^{s}$ by  $(\Delta_3)_{_{f}}^{s}$, $(\Delta_2)_{_{G}}^{u}$ by $(\Delta_2)_{_{f}}^{u}$, $R_{G, \Delta_3}^{s}$ by $R_{f, \Delta_3}^{s}$, $R_{G, \Delta_2}^{u}$ by  $R_{f, \Delta_2}^{u}$, $K_{3,G}^{s}$ by $K_{3,f}^{s}$, and $K_{2,G}^{u}$ by $K_{2,f}^{u}$.}\\


\paragraph{Final Perturbations}\label{FPM}
In \cite{MY1} was proven that if $\varphi$ is a bi-dimensional $C^2$-diffeomorphism having  two disjoint horseshoe $\Omega_1$, $\Omega_2$ with $HD(K_{\Omega_1}^{s})+HD(K_{\Omega_{2}}^u)>1$, where $K_{\Omega_1}^{s}$ is the stable Cantor sets of $\Omega_1$ and $K_{\Omega_{2}}^{{u}}$ is the unstable Cantor set of $\Omega_2$, then, we can perturb $\varphi$ in a suitable Markov partition of $\Omega_1$ without altering the dynamics in ${\Omega}_2$ and such that the new dynamic has a 
horseshoe with the property $V$ (cf. Section \ref{PropV} and \cite[pp. 19-20]{MY1}). \\
At this point, since the properties (iii) of Subsection \ref{Prop.H} fit in the above conditions, then we need to perturb the metric in some suitable opens sets (related to $(\Delta_3)_{_{F}}^{s}$ and $(\Delta_3)_{_{f}}^{s})$) to produce perturbations on $\cR$ with the property $V$.
\ \\

For $F\in{\mathcal{H}}_{\Delta_3, \Delta_2, \beta}$ or $f\in \widetilde{\mathcal{H}}_{\Delta_3, \Delta_2, \beta}$ we denote $\Omega^s_{*}$ the sub-horseshoe of $\Delta_3$, $(\Delta_3)_{_{F}}^{s}$ or $(\Delta_3)_{_{f}}^{s}$, and $\Omega^u_{*}$ the sub-horseshoe of $\Delta_2$, $(\Delta_2)_{_{F}}^{u}$ or $(\Delta_2)_{_{f}}^{u}$, which satisfies the properties (i)-(ii) of Subsection \ref{Prop.H}.
Then by the construction of $\Delta_3$ (see proof of Lemma \ref{main-lemma} or equation (\ref{Cons of Delta3})) we can take a small Markov partition $\{R_1,\dots, R_k\}$ of $\Omega^{s}_{*}$ such that, for all $i,j$
$$\pi(T_{R_j})\cap \pi({R_i^{1/2}})=\emptyset.$$
Our task is to perturb  metric $g_{_{0}}$ in $\pi(R_i^{1/2})$, so
that  this perturbation is independent, \emph{i.e.}, the dynamics of $\cR$ in $R_j$, for $j\neq i$, does not change. Moreover, the dynamic
of $\cR$ in the Markov partition of $\Delta_2$ given Lemma \ref{ind. of horseshoe} also does not change (see Definition \ref{def no dist}).
Since the diameter of $R_i$  is small enough, we can assume that $\pi(R_i^{1/2})$ is contained in a normal coordinate system, 
\emph{i.e.}, there is a point $p\in \pi(R_{i}^{1/2})$, an orthonormal basis $\{e_{1},e_{2}\}$ of $T_{p}M$ (tangent space of $p$) and an open set
$\widetilde{U}_i\subset T_{p}M$ such that $\varphi\colon \widetilde{U}_i\to U^{1/2}_i$ is a diffeomorphism defined by $\varphi(x,y)=\exp_{p}(xe_1+ye_2)$, and  $\pi(R_{i}^{1/2})\subset U^{1/2}_{i}$.\\

Let ${g}$ be a Riemannian metric $C^2$-close to the metric $g_0$ and such that the support of ${g}-g_{_{0}}$ is contained in $U_i^{1/2}$ and satisfies 
\begin{equation}\label{E-fiber disjoint}
\pi_{{g}}^{-1}(x)\cap \pi_{{g_0}}^{-1}(x)=\emptyset \ \ \text{for all}\ \  x\in \pi({R_i^{1/2}}),
\end{equation} 
where $\pi_{{g}}^{-1}(x)=\{v\in T_{x}M: \left\|v\right\|_{{g}}:=\sqrt{{g}(v,v)}=1\}$ and $S^{1}_{g_{_{0}}}(x)=\pi^{-1}(x)$. 
\begin{R}\label{RRRRRR1}
\noindent Using the notation of \emph{Definition \ref{def no dist}}, consider $V_i$ a neighborhood of $R_i$ and
put $\bar{t}_i=\ds \sup _{x\in R_i\cap \Omega^s_{*}} t_{+}(x)$, then we define the   $\mathcal{R}_{1/2}\colon V_i\to \phi^{{\overline{t}}_{i}/2}(V_i)$ as  $\cR_{1/2}(\theta)=\phi^{t(\theta)}(\theta)$, where $\phi^{t(\theta)}(\theta)\in\phi^{{\overline{t}}_{i}/2}(V_i)$  is the first hit. We recall that $\mathcal{S}^{g}\colon S^{g}M\to S^{g_{_{0}}}M$ is a $C^2$-diffeomorphism and denote by $\mathcal{T}^{g}:=(\mathcal{S}^{g})^{-1}$. Thus, analogously,  we consider  $\mathcal{R}_{1/2}^{\tilde{g}}\colon \mathcal{T}^{g}(V_i)\to \phi^{{\overline{t}_{i}}/2}_{{g}}(\mathcal{T}^{g}(V_i))$ the first hit, where $\phi_{{g}}^{t}$ is the geodesic flow of the metric ${g}$.
\end{R}

\begin{Le}\label{L. final}
Let ${g}$ be a Riemannian metric $C^2$-close enough to $g_{_{0}}$ which satisfies the   equation \emph{(\ref{E-fiber disjoint})}. 
 Then 
$$\mathcal{R}^{{g}}({\mathcal{T}^{g}}(W^{s}_{\mathcal{R}}(z)))\cap {\mathcal{T}^{g}}(\mathcal{R}(W^{s}_{\mathcal{R}}(z)))=\emptyset \ \ \text{for all}\ \ z\in R_{i}\cap \Omega^{s}_{*}.$$
\end{Le}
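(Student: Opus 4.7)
The plan is to argue by contradiction: suppose there exist $w,w'\in W^s_{\mathcal R}(z)$ with $\mathcal R^{\tilde g}(w)=\mathcal R(w')=:u\in\Xi$, and derive a contradiction from \eqref{E-fiber disjoint}. We may assume $U_i$ is chosen so small that $\pi(\Xi)\cap U_i=\emptyset$; outside $\pi^{-1}(U_i)$ the metrics $g$ and $\tilde g$ coincide, so the corresponding geodesic flows agree there. Tracing backwards from the common endpoint $u$ both the $g$- and $\tilde g$-orbits (using, where necessary, the canonical identification of $S^gM$ with $S^{\tilde g}M$ over the complement of $U_i$), they coincide as curves until they first re-enter $\pi^{-1}(U_i)$. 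Consequently the forward orbits exit $\pi^{-1}(U_i)$ at one and the same point $Q$ with the same tangent direction $d_Q$.

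Next I compare the two orbits where they cross the midpoint surface $\pi(R(\theta_i)^{1/2})\subset M$. By construction, the $g$-orbit of $w'$ crosses this surface at $\pi(\phi^{\bar t_i/2}(w'))$ with $g$-unit tangent $v_g$. For $\tilde g$ sufficiently $C^2$-close to $g$ the $\tilde g$-orbit of $w$ is a $C^1$-small perturbation of the $g$-orbit of $w'$, stays transverse to $\pi(R(\theta_i)^{1/2})$, and therefore also crosses this surface, at some nearby point $y$ with $\tilde g$-unit tangent $v_{\tilde g}$. Hypothesis \eqref{E-fiber disjoint} forces $v_{\tilde g}\notin S^1_g(y)$, so even if the two crossings happen to share base point and projective direction, the corresponding unit vectors $v_g$ and $v_{\tilde g}$ must have strictly different lengths; the two orbits therefore traverse the midpoint along a common projective direction at strictly different speeds.

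It remains to convert this speed mismatch into a transverse displacement of the Poincar\'e map, which will contradict the hypothesis. Since $\mathcal R$ preserves its stable foliation, $\mathcal R(w')\in W^s_{\mathcal R}(\mathcal R(z))=W^s_{\mathcal R}(\mathcal R(w))$, and the assumption then places $\mathcal R^{\tilde g}(w)$ on the same $g$-stable leaf as $\mathcal R(w)$. A first-order variation of the Poincar\'e map with respect to the metric, starting from the common exit data $(Q,d_Q)$ and using the midpoint tangent data $(v_g,v_{\tilde g})$, expresses $\mathcal R^{\tilde g}(w)-\mathcal R(w)$ as a Jacobi field along the $g$-geodesic of $w$ with a source term built from $\tilde g-g$. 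Condition \eqref{E-fiber disjoint} ensures the source is non-vanishing at the midpoint, and the properties of $\Delta_3$ from subsection~\ref{sec PM} (no low-period orbits and no transverse self-intersections within the return time $\bar t_i$, secured by removal of the curves $\alpha_n$) prevent the resulting Jacobi field from being tangent to the stable direction at $\mathcal R(w)$. This produces a strictly non-zero unstable component in the displacement, contradicting $\mathcal R^{\tilde g}(w)\in W^s_{\mathcal R}(\mathcal R(w))$. The main obstacle is precisely this variational non-degeneracy estimate; a clean route is to reduce to a conformal model $\tilde g=(1+\rho)^2 g$ with $\rho$ a bump of fixed sign supported in $U_i$ and non-zero on $\pi(R(\theta_i)^{1/2})$, where the Jacobi equation along the midpoint geodesic makes the unstable shift explicitly computable and its sign controllable by that of $\rho$.
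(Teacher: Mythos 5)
Your proposal attacks the full return map $\mathcal{R}^{\tilde g}$ versus $\mathcal{R}$ directly, and this is where it diverges from --- and becomes substantially harder than --- the paper's argument. The paper's proof instead first compares the two flows at the \emph{midpoint} map, i.e., it proves
$\mathcal{R}_{1/2}^{\tilde g}(W^s_{\mathcal{R}}(z))\cap \mathcal{R}_{1/2}(W^s_{\mathcal{R}}(z))=\emptyset$.
There the contradiction is a pure unit-sphere observation and needs no perturbation theory whatsoever: if $(x,X)=\mathcal{R}_{1/2}(p,P)=\mathcal{R}_{1/2}^{\tilde g}(y,Y)$ with $(p,P),(y,Y)\in W^s_{\mathcal{R}}(z)\subset SM$, then $\|X\|_g=\|P\|_g=1$ because $\phi^t$ preserves $\|\cdot\|_g$, while $\|X\|_{\tilde g}=\|Y\|_{\tilde g}=\|Y\|_g=1$ because $\phi_{\tilde g}^t$ preserves $\|\cdot\|_{\tilde g}$ and $\tilde g\equiv g$ off $U_i$; hence $X\in S^1_g(x)\cap S^1_{\tilde g}(x)$ with $x$ in (a neighbourhood of) $\pi(R(\theta_i)^{1/2})$, contradicting $(\ref{E-fiber disjoint})$. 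The upgrade to the full return then uses only the fact --- which you also exploit --- that $\phi^t_{\tilde g}$ and $\phi^t$ coincide outside $\pi_g^{-1}(U_i)$, so the two ``outer halves'' of the return are the same map, and disjointness at the middle section propagates to disjointness of the returns.

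The gap in your version is exactly the step you flag as the ``main obstacle.'' You correctly extract from $(\ref{E-fiber disjoint})$ a discrepancy in the crossing data at the midpoint, but you then need a first-order variation of the Poincar\'e map, a Jacobi-field source term built from $\tilde g-g$, and, crucially, a claim that the resulting Jacobi field produces a nonvanishing \emph{unstable} component at the return. None of that is established, and it is not a formality: there is no a priori reason a pointwise unit-sphere condition at one interior time should force the integrated variation to be transverse to the stable leaf at the endpoint --- the needed non-degeneracy would require a separate quantitative argument along the lines of the reduction to a conformal model that you sketch but do not carry out. The paper avoids the entire issue by making the comparison at the midpoint section, where the mismatch is between two unit spheres at the same base point and no sign, no estimate, and no Jacobi equation are involved. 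If you want to salvage your route you must actually prove the non-degeneracy you invoke; the shorter and correct route is to introduce $\mathcal{R}_{1/2}$ and make the comparison there.
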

\begin{proof}
First, we show that
\begin{equation}\label{eq of L. Final}
\mathcal{R}^{{g}}_{1/2}({\mathcal{T}^{g}}(W^{s}_{\mathcal{R}}(z)))\cap {\mathcal{T}^{g}}(\mathcal{R}_{1/2}(W^{s}_{\mathcal{R}}(z)))=\emptyset \ \ \text{for all}\ \ z\in R_{i}\cap \Omega^{s}_{*}.
\end{equation}
By contradiction, assume that there is $(x,X)\in \mathcal{R}^{{g}}_{1/2}({\mathcal{T}^{g}}(W^{s}_{\mathcal{R}}(z)))\cap {\mathcal{T}^{g}}(\mathcal{R}_{1/2}(W^{s}_{\mathcal{R}}(z)))$
and
$(x,X)\in \pi_{g}^{-1}(U_i^{1/2})$. Thus, there are $(y,Y), (p,P)\in W^{s}_{\mathcal{R}}(z)$ such that ${\mathcal{T}^{g}}(\mathcal{R}_{\frac 12}(p,P))=\mathcal{R}_{\frac 12}^{{g}}({\mathcal{T}^{g}}(y,Y))=(x,X)$. Since  the support of ${g}-g_{_{0}}$ is contained in $U_i^{1/2}$, then ${\mathcal{T}^{g}}(y,Y)=(y,Y)$, and $$\|\mathcal{R}_{\frac 12}^{{g}}({\mathcal{T}^{g}}(y,Y))\|_{g}=\|\mathcal{R}_{\frac 12}^{{g}}({\mathcal{T}^{g}}(y,Y))\|=\|X\|=1,$$ which implies that $\|{\mathcal{T}^{g}}(\mathcal{R}_{\frac 12}(p,P))\|=1$. On the other hand, if $\mathcal{R}_{\frac 12}(p,P)=(w,W)$, then ${\mathcal{T}^{g}}(w,W)=(w,\frac{W}{\|W\|_{g}})$ and  $1=\|{\mathcal{T}^{g}}(w,W)\|=\frac{\|W\|}{\|W\|_{g}}=\frac{1}{\|W\|_{g}}$. So, ${\mathcal{T}^{g}}(w,W)=(w,W)$ and $\|X\|_{g}=\|{\mathcal{T}^{g}}(w,W)\|_{g}=1$. This implies that $(x,X)\in \pi_{{g}}^{-1}(x)\cap \pi_{{g_0}}^{-1}(x)$ which contradicts (\ref{E-fiber disjoint}).

\noindent Since $\left\|\mathcal{R}_{\frac 12}^{{g}}({\mathcal{T}^{g}}(q, Q))\right\|_{{g}}=1$ for all $(q, Q)\in W^{s}_{\mathcal{R}}(z)$, then by
(\ref{E-fiber disjoint}) we have \\$\mathcal{R}_{\frac 12}^{{g}}({\mathcal{T}^{g}}( W^{s}_{\mathcal{R}}(z)))\cap {\mathcal{T}^{g}}(\pi^{-1}(U_i^{1/2}))=\emptyset$.
Therefore, $\phi^{t}_{{g}}(\mathcal{R}_{\frac 12}^{{g}}({\mathcal{T}^{g}}(q, Q)))=\phi^{t}(\mathcal{R}_{\frac 12}^{{g}}({\mathcal{T}^{g}}(q, Q)))$ for all
$(q, Q)\in W^{s}_{\mathcal{R}}(z)$, where $\phi^{t}_{{g}}$ and $\phi^{t}$ are the geodesic flow of the metrics ${g}$ and $g_{_{0}}$, 
respectively. So, equation (\ref{eq of L. Final}) ends the proof of the lemma.
\end{proof}
 The next step is to exhibit the perturbations or families of perturbations of $g$ that have the property (\ref{E-fiber disjoint}).

For each $i\in\{1, \cdots, k\}$, let $w_i>0$ be a small real parameter. Consider $\alpha_{w_i}(x,y)$ a continuous family of $C^{\infty}$-real function (bump function) with support contained $\widetilde{U}_i$
 ($\widetilde{U}_i$ is the domain of the parametrization $\varphi$), $C^2$-close to the null function (close depend of $w_i$) with
 $\alpha_{w_i}(0,0)=w_i$, and $\alpha_{0}(x,y)\equiv 0$. Moreover, if $w_i\neq 0$, then $\alpha_{w_i}(x,y)\neq 0$ for all
 $(x,y)\in \varphi^{-1}(\pi({R_i^{1/2}}))$. Thus, in local coordinates, we exhibit three family of Riemannian metric $g^{w_i}$ given by \\
\ \\
{(a)} $g^{w_i}=(1+\alpha_{w_i}(x,y))g$,\\
\ 
{(b)} $g^{w_i}=e^{\alpha_{w_i}(x,y)}g$,\\
\ 
{(c)}$({g^{w_i}})_{00}(x,y)=g_{00}(x,y)+\alpha_{w_i}(x,y)$, $(g^{w_i})_{rs}(x,y)=g_{rs}(x,y)  \ \ (r,s)\neq(0,0)$

\noindent which satisfy the property (\ref{E-fiber disjoint}) and therefore, it
satisfy Lemma \ref{L. final}.\\
\ \\
We denote by $\cR^{w_i}\colon\Sigma_{{g^{w_i}}}\to \Sigma_{{g^{w_i}}}$ the Poincar\'e map given by $g^{w_{i}}$ {(see Subsection \ref{Dim. Red})}. Define the following application $\Phi^{w}_{i}$ on $R_i$ by 
$$\Phi^{w_i}(x,v):=\cR^{-1}\circ \mathcal{S}^{{{g^{w_i}}}}\circ \cR^{w_i}\circ \mathcal{T}^{{g^{w_i}}}(x,v) \ \ \text{for} \ \ (x,v)\in R_{i},$$
{which is $C^2$-close to the identity if $w_i$ is small enough.}\\

As a consequence of Lemma \ref{L. final} we have 
\begin{C}\label{C-final}
For $w_i$ small enough, $\Phi^{w_i}(W^{s}_{\cR}(z))\cap W^{s}_{\cR}(z)=\emptyset$, for $z\in R_{i}\cap \Omega^{s}_{*}$. 
\end{C}

Since $i\in \{1, \dots, k\}$, then we consider the a vector $w\in \re^k$, $w=(w_1, w_2, \cdots w_k)$. Then, if $||w||$ is small enough we define a metric $g_{w}$, $C^2$-close to  metric $g$ by

\[ g_{w} = \left\{ \begin{array}{lll}
        g^{w_i}, & \mbox{\text{in a small neighborhood} of 
         $\pi(R_i^{1/2})$ }\\
           & \\
       g, & \mbox{$\text{otherwise.} $}\end{array} \right. \]     

\noindent Put $\Phi^{w}(x,v):=\Phi^{w_i}(x,v)$ if $(x,v)\in R_i$. So, it is easy to see that $\Phi^{w}=\cR^{-1}\circ \mathcal{S}^{g_w}\circ \cR^{w}\circ \mathcal{T}^{g_w}$, where $\cR^{w}$ is  the Poincar\'e map $\cR^{w}\colon\Sigma_{{g_{w}}}\to \Sigma_{{g_{w}}}$. From Lemma \ref{lemma_conjugate} (see the notation of Subsection \ref{Dim. Red})  ${\mathcal{P}_{w}}:=\cR^{J_{_{g_0}}^{w}}=\cR\circ\Phi^{w}$, which is  $C^2$-close to $\cR$, whenever $||w||$ is small enough. \\ Put $\Omega_{*}:=\Omega_{*}^{s}\cup \Omega_{*}^{u}$, then as $\Omega_{*}^{s}\cap \Omega_{*}^{u}=\emptyset$ and the perturbation on the metric is done in a small neighborhood of $\bigcup_{i=1}^{k}\pi({R_i^{1/2}})$, where $\{R_1, \dots, R_k\}$ is a small Markov partition of $\Omega_{*}^{s}$, then the hyperbolic continuation of $\Omega_{*}$ by $\mathcal{P}_{w}$ is $\widetilde{\Omega}_{*,w}:= \widetilde{\Omega}_{*,w}^{s}\cup \Omega^{u}_{*}$, with $\widetilde{\Omega}_{*,w}^{s}$ being the hyperbolic continuation of $\Omega_{*}^{s}$ by $\mathcal{P}_{w}$ and we still have $\widetilde{\Omega}_{*,w}^{s}\cap \Omega^{u}_{*}=\emptyset$.

Therefore, as an immediate consequence of Corollary \ref{C-final} we have 
the following lemma.
\begin{Le}\label{LeProV}
For a set of positive Lebesgue measure of the parameter $w$, with $||w||$ small enough,  the pair $(\mathcal{P}_{w}, \widetilde{\Omega}_{*,w})$  has the property $V$. 
\end{Le}
\begin{proof}
Observe that, by Corollary \ref{C-final}
$$\mathcal{P}_{w}(W^{s}_{\cR}(z))\cap \cR(W^{s}_{\cR}(z))=\emptyset, \, \, \text{for} \, \, z \in \Omega^{s}_{*}.$$
Moreover, as $\Omega_{*}^{s}$ and $\Omega_{*}^{u}$ satisfy the properties (i), (ii), and (iii) of the beginning of the Subsection \ref{Prop.H}, then 
\begin{itemize}
\item $\Omega_{*}^{s}\cap \Omega_{*}^{u}=\emptyset$,
\item $HD(K_{\Omega_{*}^{s}}^{s})+HD(K_{\Omega_{*}^{s}}^{s})>1$, where $K_{\Omega_{*}^{s}}^{s}$ is stable Cantor set of $\Omega_{*}^{s}$ and $K_{\Omega_{*}^{u}}^{u}$ is the unstable Cantor set of $\Omega_{*}^{u}$.
\end{itemize}

\noindent Thus, the family $\mathcal{P}_{w}$ satisfies the condition to get the property 
 $V$ ({cf. Section \ref{PropV} and \cite[Appendix 5.3]{MY1}}).
 \end{proof}

By Lemma \ref{lemma_conjugate}  the map $\mathcal{P}_{w}$ and $\cR^w$ are conjugated. Thus, as in  Subsection \ref{Dim. Red} the set $\Omega_{*,w}=\mathcal{T}^{g_w}(\widetilde{\Omega}_{*,w})=\mathcal{T}^{g_w}(\widetilde{\Omega}_{*,w}^{s})\cup\Omega_{*}^{u}:={\Omega}_{*,w}^{s}\cup\Omega_{*}^{u}$ is the hyperbolic continuation of 
$\widetilde{\Omega}_{*,w}$ by $\cR^w$. Consequently, from Lemma \ref{LeProV}, we have 
\begin{Le}\label{Crucial-Lemma}
For a set of positive Lebesgue measure of the parameter $w$, with $||w||$ small enough, the pair $(\mathcal{R}^{w}, \Omega_{*,w})$  has the property $V$.
\end{Le}

\begin{R}Let $\mathcal{G}\subset \mathcal{V}\subset$ {\boldmath$\mathcal{G}$}$^{3}(M)$ a neighborhood of $g_{w}$. Let $\Omega_{*,w,g}$ be the basic set for $\cR^{w}$ as in \emph{Remark \ref{R1-conjugates}}. Moreover
$$\Omega_{*,w,g}=\Omega_{*,w,g}^{s}\cup \Omega^{u}_{*,g} \, \, \, \, \text{with}\, \, \, \Omega_{*,w,g}^{s}\cap \Omega^{u}_{*,g}=\emptyset,$$
where $\Omega_{*,w,g}^{s}$ and  $\Omega^{u}_{*,g}$ as in  \emph{Remark \ref{R1-conjugates}}. \\

\end{R}

The last discussion, and the robustness of the property $V$ ({cf. Section \ref{PropV} and \cite[Appendix 5.3]{MY1}}), provides that
\begin{Le}\label{Crucial-Lemma-2}
If $\mathcal{G}$ is small enough, then for all $g\in \mathcal{G}$, the pair $(\cR^g, \Omega_{*,w,g})$ has the property $V$.
\end{Le}

These latter lemmas are an important tool for proving Theorems \ref{Theorem 1} and \ref{Theorem 2}, since it allows us to use the Main Theorem in \cite{MR}.
\section{The Final Proofs}\label{P. MT}
Before beginning with the proofs of Theorems \ref{Theorem 1} and \ref{Theorem 2}, we present the \ref{Main Theorem MR}.
\begin{T}[\text{\cite[Main Theorem]{MR}}]\label{Main Theorem MR}
Let $\Lambda$ be a horseshoe associated to a $C^2$-diffeomorphism $\varphi$ of  a surface $M$, such that  $HD(\Lambda)>1$.
If $\varphi_{0}$ is  $C^2$ sufficiently close to $\varphi$  such that the pair $(\varphi_0, \Lambda_{\varphi_{0}})$ has the property $V$,
where $\Lambda_{\varphi_{0}}$ is the hyperbolic continuation of $\Lambda$.   Then, there is $C^{2}$-neighborhood $\mathcal{W}$ of $\varphi_{0}$ such that, if $\Lambda_{\psi}$ denotes the continuation of $\Lambda$ associated to $\psi\in \mathcal{W}$, there is an open and dense set $H_{1}(\psi, \Lambda_{\psi})\subset C^{1}(M,\re)$ such that for all $f\in H_{1}(\psi, \Lambda_{\psi})$, we have 
\begin{equation*}
\emph{int}\,\mathbb{L}(\psi, \Lambda_{\psi}, f )\neq\emptyset \quad \text{and}
\quad   
\emph{int}\, \mathbb{M}(\psi, \Lambda_{\psi}, f) \neq\emptyset,
\end{equation*}
where \emph{int\ A} denotes the interior of the set \emph{A}.\\
\end{T}
\noindent The set $H_{1}(\psi, \Lambda_\psi)$ of the above theorem is described by  (compare with Definition \ref{DLmax00})
\begin{equation*}
H_{1}(\psi, \Lambda_\psi)=\left\{f\in C^{1}(M,\re):\#M_{f}(\Lambda_{\psi})=1, \  z\in M_{f}(\Lambda_{\psi}), \ D\psi_{z}(e_{z}^{j})\neq 0, j=s,u\right\},
\end{equation*}
 where $M_{f}(\Lambda_{\psi}):=\{z\in \Delta: f(z)\geq f(x)\ \text{for all} \, x\in \Lambda_{\psi}\}$ is  the set of maximum points of
 $f$ in $\Lambda_{\psi}$ and $e_{z}^{j}\in E^{j}(z)$ is a unit vector, $j=s,u$. \\
\ \\
An immediate consequence of the above theorem, which will be useful to the proof of Theorem \ref{T3}, is the following corollary 
\begin{C}\label{C-MTMR}
The above theorem is also valid for the set of functions
\begin{equation*}
H_{\infty}(\psi, \Lambda_\psi)=\left\{f\in C^{1}(M,\re):\#M_{f}(\Lambda_{\psi})<\infty, \, \, \, D\psi_{z}(e_{z}^{j})\neq 0, \, \,  z\in M_{f}(\Lambda_{\psi}), \, j=s,u\right\}.
\end{equation*}
\end{C}

\begin{proof}[\bf{Proof of the Theorem \ref{Theorem 1}}]
Let $F\in{\mathcal{H}}_{\Delta_3, \Delta_2, \beta}$ for $\beta$ small, in this case, $\Omega^s_{*}=(\Delta_3)_{_{F}}^{s}$ a sub-horseshoe of $\Delta_3$ and $\Omega^{u}_{*}=(\Delta_{2})_{F}^{u}$ a sub-horseshoe of $\Delta_2$. Then, by Lemma \ref{Crucial-Lemma}, we consider the metric $g_w$ such that $(\mathcal{R}^{w}, \Omega_{*,w})$ has the property $V$ (see Section \ref{PropV}). By Remark \ref{Remark-principal-0}, we have that $F\circ \mathcal{S}^{g_w}\in {\mathcal{H}}_{(\Delta_3)_{w}, {\Delta_2}, \tilde{\beta}}$, where $(\Delta_{3})_{w}$ is the hyperbolic continuation by $\mathcal{R}^{w}$ of $\Delta_3$. Therefore, by the definition of ${\mathcal{H}}_{(\Delta_3)_{w}, {\Delta_2}, \tilde{\beta}}$ we have that 
$$\max  (F\circ \mathcal{S}^{g_w})_{\phi_w}|_{\Sigma_{_{g_w}}\cap (R_{F\circ \mathcal{S}^{w},\Omega_{*,w}^{s}}^{u}\cup \, R_{F\circ \mathcal{S}^{w},\Omega_{*}^{u}}^{s})}\in H_1(\cR^{w},(\Omega_{*,w}^{s}\cup \Omega_{*}^{u})),$$ 
where $R_{F\circ \mathcal{S}^{w},\Omega_{*,w}^{s}}^{u}$, and $R_{F\circ \mathcal{S}^{w},\Omega_{*}^{u}}^{s}$ are the Markov partitions given by the definition of  ${\mathcal{H}}_{(\Delta_3)_{w}, {\Delta_2}, \tilde{\beta}}$.\\
Thus, as  $(\mathcal{R}^{w}, \Omega_{*}^{w})$ has the property $V$, then by the Main Theorem
at \cite{MR} we can conclude that 
\begin{eqnarray}\label{Interior}
\text{int}\, \mathbb{M}(\cR^{w},\Omega_{*}^{w}, \max  (F\circ \mathcal{S}^{g_w})_{\phi_w}|_{\Sigma_{_{g_w}}\cap (R_{F\circ \mathcal{S}^{w},\Omega_{*,w}^{s}}^{u}\cup \, R_{F\circ \mathcal{S}^{w},\Omega_{*}^{u}}^{s})}&\neq &  \emptyset,\\ \label{Interior-1}
\text{int}\, \mathbb{L}(\cR^{w},\Omega_{*}^{w}, \max  (F\circ \mathcal{S}^{g_w})_{\phi_w}|_{\Sigma_{_{g_w}}\cap (R_{F\circ \mathcal{S}^{w},\Omega_{*,w}^{s}}^{u}\cup \, R_{F\circ \mathcal{S}^{w},\Omega_{*}^{u}}^{s})} &\neq &  \emptyset.
\end{eqnarray}

\noindent Let $\mathcal{G}\subset \mathcal{V}\subset$ {\boldmath$\mathcal{G}$}$^{3}(M)$ a neighborhood of $g_{w}$ as in Lemma \ref{Crucial-Lemma-2}, then  for every $g\in \mathcal{G}$, the pair $(\cR^g, \Omega_{*,w,g})$  has the property $V$. 
So, for every $F\in{\mathcal{H}}_{\Delta_3, \Delta_2, \beta}$ the function  $F\circ \mathcal{S}^{g}\in {\mathcal{H}}_{(\Delta_3)_{g}, {\Delta_2}, \tilde{\beta}}$ and by Main Theorem at \cite{MR}, we have that the triplet $(\cR^g, \Omega_{*,w,g}, \max (F\circ \mathcal{S}^{g})_{\phi_g})$ satisfies the  equations (\ref{Interior}) and (\ref{Interior-1}). Therefore, put $\mathcal{H}_{g,\Lambda}:={\mathcal{H}}_{(\Delta_3)_{g}, {\Delta_2}, \tilde{\beta}}$, then  as ${\mathcal{H}}_{\Delta_3, \Delta_2, \beta}$ is dense and $C^2$-open, then we have completed the proof of
Theorem \ref{Theorem 1} (and, \emph{a fortiori}, Theorem \ref{T1}).

\end{proof}
\begin{proof}[\bf{Proof of the Theorem \ref{Theorem 2}}] 
The proof is completely analogous to the proof of Theorem \ref{Theorem 1},  just change ${\mathcal{H}}_{\Delta_3, \Delta_2, \beta}$, $\Omega_{*}^{s}$,  $\Omega_{*}^{u}$, and $F\circ \mathcal{S}^{g}$ by $\tilde{\mathcal{H}}_{\Delta_3, \Delta_2, \beta}$, $(\Delta_3)_{f}^{s}$, $(\Delta_2)_{f}^{u}$, and $f\circ \pi_g$, respectively. The set of functions will be $\widetilde{\mathcal{H}}_{g,\Lambda}:=\widetilde{\mathcal{H}}_{(\Delta_3)_{g}, {\Delta_2}, \tilde{\beta}}$, for some $\tilde{\beta}$ and $g\in \mathcal{G}\subset \mathcal{V}\subset$ {\boldmath$\mathcal{G}$}$^{3}(M)$ a neighborhood of $g_{w}$ as in Lemma \ref{Crucial-Lemma-2}.
\end{proof}
\section{The Spectrum of the Height Function}\label{SHF}
Let \textit{e} be an \emph{end} of the manifold $M$ and $\Gamma(t)$ a ray that defines the \emph{end}\ $\emph{e}$. Thus, the \emph{height function} associated to $e$ is  defined by 
$$ht_{e}(x)=\lim_{t\to+\infty}d(x,\Gamma(t))-t.$$
Usually  $-ht_{e}(x)$ is called the \emph{Busemann  function}  associated to end $e$ and denoted by $b_{\Gamma}(x)$  {(see \cite{Eberlein} for the precise definition of an \textit{end})}.

\subsection{Differentiability of Busemann Function}\label{D. of B. F.}
It is known that the Busemann functions are not always differentiable, however, the points where we lose the differentiability are well known and we are going to use it to show some results where we have the differentiability of the Busseamnn functions, at least in a region close to the set hyperbolic $\Delta$.


\begin{Defi}\label{co-points}
Let $\gamma\colon [0,\infty) \to M$ be a forward ray. Then a forward ray\break $\sigma\colon [0,\infty) \to M$ emanating from 
$x:= \sigma(0)\in M$ is called a \emph{forward coray} $($or a \emph{forward asymptotic ray}$)$ to $\gamma$ if there exists a  divergent sequence of numbers $\{t_j\}$ and a sequence of minimal 
geodesics ${\sigma_j}$ with  $q_j:= \sigma_{j}(0)$,  $\sigma_j(l_j):=\gamma(t_j)$, for some $l_j>0$, and  such that 
$\ds\lim_{j\to +\infty} q_j = \sigma(0)$ and $\sigma'(0) = \ds\lim_{j\to +\infty} \sigma'_{j}(0)$  \emph{(see Figure 1)}. A forward coray of $\gamma$ is called \emph{maximal} if for any $\epsilon > 0$ its extension to $[-\epsilon,+\infty)$ is no longer a coray to $\gamma$. The origin points of
maximal corays of $\gamma$ are called the \emph{co-points to $\gamma$} and it is denoted by $C_{\gamma}$.
\begin{figure}[hbtp]
\label{fig: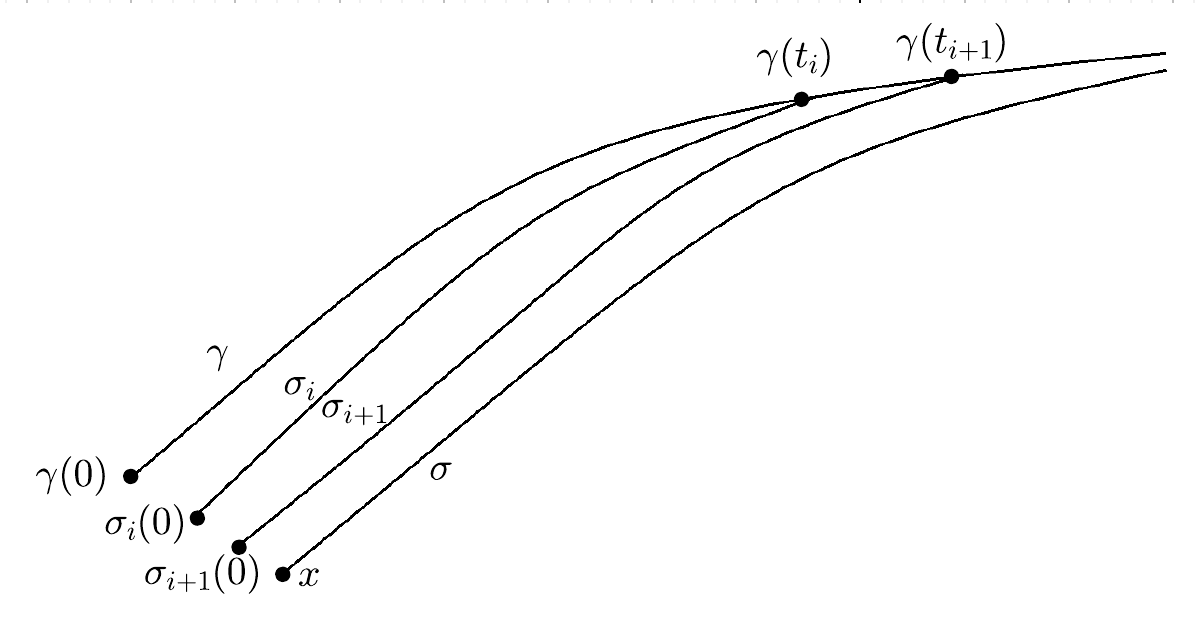}
\centering
\includegraphics[scale=0.5]{Figure10.pdf}
\caption{Forward Coray}
\end{figure}

\end{Defi}
\begin{Pro}[\text{\cite[Proposition 1]{Innami1}}]
If $p\in M$ is not a co-point to $\gamma$, then the Busemann function $b_{\gamma}$ is differentiable at $p$. 
\end{Pro}

\begin{R}\label{Not-differ}
Let $ND(b_{\gamma})\subset M$  
be the set of non-differentiable points of 
the Busemann function $b_{\gamma}$. Then, 
$ND(b_\gamma)\subset C_{\gamma} \subset \overline{ND(b_{\gamma})}$.
\end{R}
\noindent The following Lemma gives us information on what happens at the end $e$, for an asymptotic ray to $\gamma$ ( \cite[Theorem 3.8.2]{SST})
\begin{Le}\label{Increasing Busemann}
Let $\gamma\colon [0,\infty) \to M$ a ray.
\begin{enumerate}
\item[\emph{(1)}] For any $x\in M$, there exists a ray $\sigma$ asymptotic to $\gamma$ such that $\sigma(0)=x$.
\item[\emph{(2)}] For any ray $\sigma$ asymptotic to $\gamma$ it holds $b_{\gamma}(\sigma(s))=b_{\gamma}(\sigma(0))+s$.
\item[\emph{(3)}] If $b_{\gamma}$ is differentiable at $x\in M$, then $\sigma(s)=\exp_{x}(s\nabla b_{\gamma}(x))$
is a unique ray asymptotic to $\gamma$ emanating from $x$, where $\nabla b_{\gamma}(x)$ is the Riemannian gradient of $b_{\gamma}$ at x.
\end{enumerate}
\end{Le}
\begin{proof}
The proof of item (1) is trivial. Item (3) is a consequence of the item (2). Thus, let us  prove item (2).
Take $\{t_i\}_{i\in\mathbb{N}}$ and $\{\sigma_i\}_{i\in\mathbb{N}}$ as in the definition of  asymptotic. It holds that
$$b_{\gamma}(\sigma(s))=\lim_{t_i\to \infty}\{t_i-d(\sigma(s), \gamma(t_i))\}$$
by the definition of $b_{\gamma}$. We can replace $\sigma(s)$ in the right hand side with $\sigma_{i}(s)$ since
$$|d(\sigma(s), \gamma(t_i))-d(\sigma_{i}(s),\gamma(t_i))|\leq d(\sigma(s),\sigma_{i}(s))\to 0 $$
as $i\to \infty$. Hence we have, by the choice of $\sigma_i$, 
\begin{eqnarray*}
b_{\gamma}(\sigma(s))&=&\lim_{i\to \infty}\{t_i-d(\sigma_{i}(s), \gamma(t_i))\}=\lim_{i\to \infty}\{t_i-d(\sigma_{i}(0), \gamma(t_i))+s\}\\
&=&\lim_{i\to \infty}\{t_i-d(\sigma(0), \gamma(t_i))+s\}=b_{\gamma}(\sigma(0))+s.
\end{eqnarray*}
\end{proof}

\noindent In our case, since $M$ is a surface with pinched curvature and finite volume, then Gromov  \cite{Gromov} and Heintze  \cite{Heintze} showed that $M$ is diffeomorphic to the interior of a compact manifold with boundary. Moreover, for every point $p \in M$, there are only finitely many distinct minimizing geodesic rays emanating from $p$. In particular, if $M$ has only finitely many \textit{ends} and each \textit{end is parabolic}, \emph{i.e.}, the quotient of a horoball of 
$\widetilde{M}$ (the universal covering of $M$) by a group of parabolic isometries with a compact cross-section (see \cite{Eberlein}). Therefore, each \textit{end} of $M$ is quasi-isometric to a ray. 
Thus, suppose that $C_0, C_1, \dots, C_k$ are simple closed geodesic polygons on $M$, no two of which have common point, such that each $C_i$ bounds a tube $U_i$ homeomorphic to a circular disk punctured at the center (or a cylinder), that is, a representation of the \textit{ends} of $M$. Then $M\setminus \cup_{i=0}^{k}U_i$ is a bounded open set. 
\begin{Defi}
A tube $U_i$ is said non-expanding or expanding according to any minimal sequence of closed curves homotopic with the curve $C_i$ being bounded or non.
\end{Defi}
\begin{R}\label{Tubes not-expanding}
In this case, since  $U_i$ represents a \textit{parabolic end}, then any minimal sequence of closed curves homotopic to the boundary $C_i$ is bounded, so each  $U_i$ is {non-expanding} \emph{(cf. \cite{Nasu})}.
\end{R}
\begin{T}[\text{\cite[Theorem 5.2]{Nasu}}]
Suppose that $S$ is a surface that is homeomorphic to a cylinder. Then for
a ray $\gamma$ the set $C_{\gamma}$ is an unbounded simple arc or vacuous according as the
tube $U_0$ which contains $\gamma$ or a subray of $\gamma$, is expanding or non-expanding.
\end{T}
\indent As a consequence of the above theorem, Lemma \ref{Increasing Busemann}, Remark \ref{Not-differ}, and Remark \ref{Tubes not-expanding} we have  {(see \cite[chap 3]{Ballmann}}) that
\begin{Le}\label{L-Nasu}
The Busemann function $b_{\gamma}$ increases in  tube $U_0$ which contains $\gamma$ or a subray of $\gamma$. Moreover,  it is $C^{2}$ in $U_0$.
\end{Le}
\begin{proof}
Item (2) of Lemma \ref{Increasing Busemann} implies the first part. Now, by Remark \ref{Tubes not-expanding} the tube $U_0$ is not expanding, then by
the above theorem, we have that $C_{\gamma}\cap U_0=\emptyset$. Thus, by Lemma \ref{Increasing Busemann} the Busemann function $b_{\gamma}$ is
$C^{1}$ in $U_0$. Now, we will prove that it is also $C^{2}$.
Let $x\in U_0$ be and  Let $t_n$ be a sequence such that $t_n\to \infty$, let $$b_{n}(x)=t_n-d(\gamma(t_n),x).$$
Then $b_n\to b_{\gamma}$ uniformly on compact subsets of $M$. Put  $r_n:=d(x,\gamma(t_n))$, and let $X$ be a smooth vector field on $M$ and let $\alpha_{x}(s)$ be an integral curve of the field $X$ with $\alpha_{x}(0)=x$, then for each $s$ 
there is $v_{n}(s)\in \pi^{-1}(\alpha_{x}(s))$ such that $\exp_{\alpha_{x}(s)}t(s)v_{n}(s)=\gamma(t_n)$ for 
some $t(s)\in\re$, $i=1,\dots,m$. It is  easy to see that each $v_{n}(s)$  is smooth in $s$ and moreover,
\begin{equation}\label{E1-L-Nasu}
\ds\frac{d}{ds}v_{n}(s)\Bigg|_{s=0}=-J'_{n}(x,0),
\end{equation}
where $J_{n}(x,\cdot)$ is the Jacobi vector field along of the geodesic $\exp_{x}\,tv_{n}(0)$ with\break $J_{n}(x,0)=X(x)$ and $J_{n}(x,r_n)=0$.
Note that, as the curvature of $M$ is negative, then $M$ has no conjugate points and therefore $J_{n}(x,\cdot)$ is unique with these properties.

For $s$ small $\alpha_{x}(s)\in U_0$, thus $b_{\gamma}$ is differentiable in $\alpha_{x}(s)$ and by Lemma \ref{Increasing Busemann} we have that 
\begin{equation}\label{E2-L-Nasu}
\nabla b_{\gamma}(\alpha_{x}(s))=\lim_{n\to +\infty} v_{n}(s).
\end{equation}
Then by equations $(\ref{E1-L-Nasu})$ and $(\ref{E2-L-Nasu})$ we have that for every $i\in\{1,\dots,m\}$,
$$D_{X(x)}\nabla \, b_{\gamma}(x)=\frac{\partial}{\partial s} \nabla\, 
b_{\gamma}(\alpha_{x}(s))\Bigg|_{s=0}=\ds\lim_{n\to +\infty}\frac{d}{ds}v_{n}(s)|_{s=0}=-\lim_{n\to +\infty}J'_{n}(x,0).$$
Since $J_{n}(x,0)=X(x)$ and $J_{n}(x,r_{n})=0$ and $M$ has no conjugate points \break $J_{n}(x,\cdot)\to J_{X}(\cdot)$ the unique stable Jacobi vector 
field along $\exp_{x}\,t\nabla \, b_{\gamma}(x)$ with $J_{X}(0)=X(x)$ (cf. Section \ref{sec Geo SM}), and  thus $J'_{n}(x,\cdot) \to J'_{X}(\cdot)$.\\
Therefore, we have 
\begin{equation}\label{E3-L-Nasu}
D_{X}\nabla \, b_{\gamma}(x)=-J'_{X}(0),
\end{equation}
for $x\in U_0$. Thus, as $X$ is smooth, we conclude  that $b_{\gamma}$ is $C^{2}$ in the tube $U_{0}$.
\end{proof}
\subsection{Horseshoe for Tubes}\label{H. for tube}
Fix a ray $\gamma$ and consider $U_{0}$ as in Lemma \ref{L-Nasu}. In this section, we construct a hyperbolic set for $\phi^t$ whose projection on $M$ intersects the tube $U_0$.
\begin{Defi}
We say that an open set $V_{0}\subset U_0$ is a \emph{collar of $U_0$}, if $U_0\setminus V_{0}$ has two connected components. 
\end{Defi}

\begin{Le} \label{Collar}Let $V_0$ be a collar of $U_0$, then there is a basic set $\Lambda_{0}\subset SM$ with $HD(\Lambda_0)$ close enough to $3$ and such that
$$\pi(\Lambda_0)\cap V_0=\emptyset,$$
where $\pi\colon SM\to M$ is the canonical projection.
\end{Le}
\begin{proof}
Since the geodesic flow is transitive, then the construction given in Lemma \ref{L1-HD>1} provides the result.
\end{proof}
\subsubsection{Poincar\'e Map for the Tube}
By Lemma \ref{L-Nasu} and definition of $\Lambda_{0}$, for each $(x,v)\in \Lambda_0$ there is a $\bar{t}(x,v)$ such that 
$\ds\max_{t\in\re}b_\gamma(\pi(\phi^{t}(x,v))=b_\gamma(\pi(\phi^{\bar{t}(x,v)}(x,v))$ and $\phi^{\bar{t}(x,v)}(x,v)\in U_0$, since $\Lambda_0$ is compact invariant set. Thus 
$b_{\gamma}$ is $C^{2}$ in $\phi^{\bar{t}(x,v)}(x,v)$.
Consider the set $$A(x,v):=\{t\in \re: \ b_\gamma(\pi(\phi^{t}(x,v)))=b_\gamma(\pi(\phi^{\bar{t}(x,v)}(x,v)))\}.$$
Is easy to see that $1\leq |A(x,v)|<\infty$, where $|A(x,v)|$ is the cardinality of $A(x,v)$.
\begin{Le}\label{Tangent}
There is $\eta>0$ such that for every $(x,v)\in \Lambda_{0}$ and any  $s,r\in A(x,v)$, we have that $|s-r|> \eta$. 

\end{Le}
\begin{proof}
The proof of the lemma is a consequence of the following claim.\\
\textbf{Claim:} For any $(x,v)\in \Lambda_0$, put $f_{\gamma}(t)= b_\gamma(\pi(\phi^{t}(x,v)))$, then $f''(s)\neq 0$ for all $s\in A(x,v)$.\\ 
\textbf{Proof of Claim:}\\
By definition of $A(x,v)$ and Lemma \ref{L-Nasu} we have that $f(t)$ is $C^{2}$ in a small neighborhood of $A(x,v)$ and  $f'(s)=0$ for all
$s\in A(x,v)$. In this neighborhood, from proof of Lemma \ref{L-Nasu} we can see that
$$f''(t)=-\langle J'_{\gamma_{v}'(t)}(0),\gamma_{v}'(t)\rangle,$$
where $J_{\gamma_{v}'(t)}(\cdot)$ is a stable Jacobi vector field along $\sigma_{\gamma_{v}(t),\gamma}$ with
$J_{\gamma_{v}'(t)}(0)=\gamma'_{v}(t)$. Here $\sigma_{\gamma_{v}(t),\gamma}$ is the only asymptotic geodesic from  $\gamma_{v}(t)$ to
$\gamma$ (see equation (\ref{E3-L-Nasu})).\\
If $f''(t)=0$, since  the curvature is negative, the real function $g(z)=\Vert J_{\gamma_{v}'(t)}(z)\Vert^{2}$ is strictly convex.  Moreover, 
$g(0)=\Vert J_{\gamma_{v}'(t)}(0)\Vert^{2}=\Vert\gamma'_{v}(t)\Vert^{2}=1$ and $g'(0)=2\cdot f''(t)=0$. So, $g$ is a strictly convex function with 
a minimum point in $z=0$, then $g$ is an unbounded function. But since $J_{\gamma_{v}'(t)}(\cdot)$ is a stable Jacobi vector field, then it holds 
that  $\Vert J_{\gamma_{v}'(t)}(z)\Vert$ is bounded, so we have a contradiction. Thus, we concluded the proof of the claim.

Now as $f_{\gamma}$ is $C^{2}$ in a neighborhood of $A(x,v)$, then the claim implies that its critical points are isolated. Thus, we conclude the
proof of the Lemma.
\end{proof}
The basic set $\Lambda_0$ satisfies all the properties of Subsection \ref{Dim. Red}, the and put also $\cR\colon \Sigma \to \Sigma$ the Poincar\'e map, when $\Sigma=\bigsqcup\limits_{i=1}^k \Sigma_i$, each $\Sigma_i$ is a Good Cross-Section, and $\Lambda_0\cap \Sigma$ a basic set for $\cR$ with $HD(\Lambda_0\cap \Sigma)>1$ (in fact close to $2$).
\noindent The Lemma \ref{Tangent} and the construction made in \cite[Section 3]{R}, allow us to choose Good Cross Sections $\Sigma_i$ such that 
$$\ds\Lambda_0\subset \bigcup^{k}_{i=1}\phi^{(-\gamma,\gamma)}(\text{int}\,({\Sigma}_i)),$$  
where  $\gamma>0$ (which depend of $\eta$ at Lemma \ref{Tangent}) and 

\begin{itemize}
\item[(1)] For $(x,v)\in \Lambda_0 \cap \Sigma$, then $\phi^{s}(x,v)\notin \Sigma_{i}$ for all $s\in A(x,v)$ and all $i=1,\dots, k$. 
\item[(2)] Consider the set $\mathcal{A}=\ds\bigcup_{s\in A(x,v)}\bigcup_{(x,v)\in \Lambda_0}\phi^{s}(x,v)$.\\ Then by item (1),
$\Lambda_{0}\cap \Sigma \cap \mathcal{A}=\emptyset$. Therefore,  $0\notin A(y,w)$ for all $(y,w)\in \Lambda_{0}\cap \Sigma$. \\Also, for all 
$(y,w)\in \Lambda_{0}\cap \Sigma$ with $|A(y,w)|\geq 2$ and for any two consecutive values $s<r$ in $A(y,w)$, there are
$\Sigma_{i(s)}^{(y,w)}, \Sigma_{i(r)}^{(y,w)} \in \{\Sigma_1, \dots,\Sigma_k\}$ such that  
$$ \mathcal{R}(\phi^{s}(y,w))\in\Sigma_{i(s)}^{(y,w)} \ \text{and} \ \ \mathcal{R}^{-1}(\phi^{r}(y,w))\in\Sigma_{i(r)}^{(y,w)}.$$
\end{itemize}
   
\noindent Now, let us define a class of differentiable functions that contains all the information on the Lagrange and Markov Dynamical Spectra 
(see \cite{MR}) associated with the Busemann function.

For $(x,v)\in \Lambda_0\cap \Sigma$ consider the sets $A^{+}(x,v):=\{s>0\colon s\in A(x,v)\}$ and $A^{-}(x,v):=\{s<0\colon s\in A(x,v)\}$. If
$|A^{\pm}(x,v)|\geq 1$. Then, we denote $a^{+}(x,v)= \ds \min\{s: s\in A^{+}(x,v)\}$ and $a^{-}(x,v)=\ds\max\{s: s\in A^{-}(x,v)\}$. Thus, $a^{-}(x,v)<a^{+}(x,v)$ are consecutive values in $A(x,v)$, then by item (2), there are  $\Sigma_{i(a^{-}(x,v))}^{(x,v)}$ and $\Sigma_{i(a^{+}(x,v))}^{(x,v)}$ such that 
$$\phi^{s^{-}(x,v)}(x,v)=\mathcal{R}(\phi^{a^{-}(x,v)}(x,v))\in\Sigma_{i(a^{-}(x,v))}^{(x,v)}$$
and 
$$\phi^{s^{+}(x,v)}(x,v)=\mathcal{R}^{-1}(\phi^{a^{+}(x,v)}(y,w))\in\Sigma_{i(a^{+}(x,v))}^{(x,v)}.$$
\begin{itemize}
\item If $A^{-}(x,v)=\emptyset$, then put $r^{+}(x,v)$ such  that $\phi^{r^{+}(x,v)}(x,v)=\mathcal{R}(\phi^{a^{+}(x,v)}(x,v))$ and $\mathcal{R}^{-1}(x,v)=\phi^{t_{-}(x,v)}(x,v)$.
\item If $A^{+}(x,v)=\emptyset$, then put $r^{-}(x,v)$ such  that $\phi^{r^{-}(x,v)}(x,v)=\mathcal{R}^{-1}(\phi^{a^{-}(x,v)}(x,v))$ and $\mathcal{R}(x,v)=\phi^{t_{+}(x,v)}(x,v)$.
\end{itemize}

Thus, we can define the following function $\overline{\max}_{\phi}b_{\gamma}\colon \Sigma \to \re$ as

\[ \ds \overline{\max}_{\phi}b_{\gamma}(x,v) = \left\{ \begin{array}{lllll}
          \ds\max_{[s^{-}(x,v),s^{+}(x,v)]}b_{\gamma}(\pi(\phi^{t}(x,v))) & \mbox{\text{if} $|A^{\pm}(x,v)|\geq 1$},\\
         & \\
          \ds\max_{[t_{-}(x,v),r^{+}(x,v)]}b_{\gamma}(\pi(\phi^{t}(x,v))) &  \mbox{\text{if} $|A^{-}(x,v)|=0$}, \\
         & \\
       \ds\max_{[r^{-}(x,v),t_{+}(x,v)]}b_{\gamma}(\pi(\phi^{t}(x,v))) & \mbox{\text{if}  $|A^{+}(x,v)|=0$.}
       \end{array} \right.\]

\noindent Item (2) above shows that in each interval of the definition of $\overline{\max}_{\phi}b_{\gamma}(x,v)$ the maximum point is 
unique, then function $\overline{\max}_{\phi}b_{\gamma}$ is $C^{1}$.

 \subsubsection{The Behavior of the  Maximum Points of $\overline{\max}_{\phi}b_{\gamma}$} \label{BMb}
 
The next step is to show that the function $\overline{\max}_{\phi}b_{\gamma}$ has a finite number of maximum points (restricted to 
$\Lambda_0\cap \Sigma$) and in every one of these maximum points, its gradient is not parallel to the stable and unstable bundles.

From Lemma \ref{Collar}, the basic set $\Lambda_{0}$ has Hausdorff dimension close to $3$. Thus,  set $\Delta_{0}:=\Lambda_{0}\cap \Sigma$ is a basic set for  $\mathcal{R}$ with $HD(\Delta_{0})$ is close to 
$2$ (see Section \ref{HD>1}).

Moreover, the splitting $E^{s}\oplus \phi\oplus E^{u}$ over a neighborhood of $\Lambda_0$ defines a continuous splitting $E^{s}_{\Sigma}\oplus E^{u}_{\Sigma}$ of the tangent bundle $T\Sigma$, defined by 
\begin{eqnarray}\label{E1-BMb}
E^{s}_{\Sigma}(y)=E^{cs}_{y}\cap T_{y}\Sigma \ and \ E^{u}_{\Sigma}(y)=E^{cu}_{y}\cap T_{y}\Sigma,
\end{eqnarray}
where $E_{y}^{cs}=E^{s}_y\oplus \left\langle \phi(y)\right\rangle$ and $E_{y}^{cu}=E^{u}_y\oplus \left\langle {\phi}(y)\right\rangle$. Also,
$W^{s}(x,\Sigma)=W^{cs}_{y}\cap \Sigma$ and $W^{u}(x,\Sigma)=W^{cu}_{y}\cap \Sigma$ are the stable and unstable manifolds of $x\in \Delta_0$ (cf. \cite[Appendix]{R}).

\ \\
 We denote the set $$M_{\overline{\max}_{\phi}b_{\gamma}}(\Delta_0)=\{(y,w)\in \Delta_0: \overline{\max}_{\phi}b_{\gamma}(y,w)\geq \overline{\max}_{\phi}b_{\gamma}(x,v) \ \ \text{for all} \ \ (x,v)\in \Delta_{0}\}.$$
It is clear that by definition of $\overline{\max}_{\phi}b_{\gamma}$, the set  $M_{\overline{\max}_{\phi}b_{\gamma}}(\Delta_0)\subset U_{0}$.
The set  $U_{0}$ is defined as in Lemma \ref{L-Nasu}.

\begin{Le} \label{L1-BMb}For any $(y,w)\in M_{\overline{\max}_{\phi}b_{\gamma}}(\Delta_0)\cap \Sigma$,  
we have that 
\begin{equation}\label{E2-BMb}
\frac{\partial}{\partial v^{s}}\overline{\max}_{\phi}b_{\gamma}(y,w)\neq 0 \ \ \text{and} \ \ \frac{\partial}{\partial v^{u}}\overline{\max}_{\phi}b_{\gamma}(y,w)\neq 0,
\end{equation}
where $v^{j}\in E^{j}_{\Sigma}(y)$ is a unit vector, $j=s,u$. 
\end{Le}
\begin{proof}
Let $\alpha(r)\subset W^{s}_{\epsilon}((y,w),\Sigma)$ be with $\alpha'(0)=v^{s}$, then there is a differentiable real function $t(r)$ such that 
$$\overline{\max}_{\phi}b_{\gamma}(\alpha(r))=b_{\gamma}\circ\pi(\phi^{t(r)}(\alpha(r))).$$
Put $\tilde{\alpha}(r)=\phi^{t(r)}(\alpha(r))$ e $\beta(r)=\pi(\tilde{\alpha}(r))$. By contradiction, suppose that  
$$0=\frac{\partial}{\partial v^{s}}\overline{\max}_{\phi}b_{\gamma}(y,w)=\frac{d}{dr}\overline{\max}_{\phi}b_{\gamma}(\alpha(r))|_{r=0}=\langle\nabla\,b_{\gamma}(\beta(0)), \beta'(0)\rangle.$$
Also, as the function $t \mapsto b_{\gamma}\circ\pi(\phi^{t}(y,w)))$ has a local maximum in $t=t(0)$, then 
$$0=\frac{d}{dt}b_{\gamma}\circ\pi(\phi^{t}(y)))|_{t=t(0)}=\langle\nabla\,b_{\gamma}(\beta(0)), D\pi_{\phi^{t(0)}(y,w)}\phi(\phi^{t(0)}(y,w))\rangle=\langle\nabla\,b_{\gamma}(\beta(0)), \gamma'_{w}(t(0))\rangle.$$
Thus, as $\nabla\,b_{\gamma}(\beta(0))\neq 0$, then $\beta'(0)$ and $\gamma'_{w}(t(0))$ are parallel.

Now, our next task is to show that the last statement leads to a contradiction. Observe that
$\tilde{\alpha}'(r)=A\phi(\tilde{\alpha}(r))+B\Sigma(r)$ for some $\Sigma(r)\in E^{s}(\tilde{\alpha}(r))\setminus \{0\}$. Then, we take $\theta>0$ 
small and consider the surface 
$$\mathcal{S}_{\theta}^{s}:=\bigcup_{r}\phi^{(-\theta,\theta)}(\tilde{\alpha}(r))\subset W^{cs}_{loc}(\tilde{\alpha}(0)).$$
Then, the tangent space of $\mathcal{S}_{\theta}^{s}$ in $\phi^{\delta}(\tilde{\alpha}(r))$ for $|\delta|<\theta$ is 
\begin{eqnarray*}
T_{\phi^{\delta}(\tilde{\alpha}(r))}\mathcal{S}_{\theta}^{s}&=&\text{span}\{\phi(\phi^{\delta}(\tilde{\alpha}(r)))\}\oplus \text{span}\{ D\phi^{\delta}_{\tilde{\alpha}(r)}(\tilde{\alpha}'(r))\}\\
&=&\text{span}\{\phi(\phi^{\delta}(\tilde{\alpha}(r)))\} \oplus \text{span}\{ D\phi^{\delta}_{\tilde{\alpha}(r)}(\Sigma(r))\} = E^{cs}(\phi^{\delta}(\tilde{\alpha}(r))).
\end{eqnarray*}
Therefore, by Lemma \ref{Lemma 1 ad}, we have that $\mathcal{S}_{\theta}^{s}$ is transverse to the fiber of $\pi$, so
$\pi|_{\mathcal{S}_{\theta}^{s}}$ is a local diffeomorphism. We know that $D\pi_{\tilde{\alpha}(0)}\phi(\tilde{\alpha}(0))=\gamma'_{w}(t(0))$ and $D\pi_{\tilde{\alpha}(0)}(\tilde{\alpha}'(0))=\beta'(0)$ are parallel and 
$T_{\tilde{\alpha}(0)}\mathcal{S}_{\theta}^{s}=\langle\phi(\tilde{\alpha}(0))\rangle\oplus \langle \tilde{\alpha}'(0)\rangle$. This is a
contradiction since $\pi|_{\mathcal{S}_{\theta}^{s}}$ is a local diffeomorphism.

The proof of the unstable case is analogous. Thus, we conclude the proof of the lemma.
\end{proof}
\begin{C}\label{C2-BMb}
The set of maximum points $M_{\overline{\max}_{\phi}b_{\gamma}}(\Delta_0)$ consists of isolated points and therefore is finite. 
\end{C}
\subsection{Proof of Theorem \ref{T3}}\label{PT3}
In this section, we use the family of  perturbations, $g_{w}$, of  metric $g$ as in Subsection \ref{sec PM}, in such a way that if $\mathcal{R}^{w}$  is 
the Poincar\'e map  associated with the geodesic flow of $g_w$ (see paragraph after Corollary \ref{C-final}), then the pair  
$(\mathcal{R}^{w},\Delta_{0}^{w})$ has the property $V$ (see Subsection \ref{PropV}), where
$\Delta_{0}^{w}$ is the hyperbolic continuation of $\Delta_0$. Moreover, by Remark \ref{RBI}, we can assume also that $\mathcal{R}^{w}$ has
the property that the Birkhoff invariant is non-zero for some periodic orbit (see Subsection \ref{Family of Pert}).\\
Let $\gamma$ be a ray as in  Subsections \ref{D. of B. F.} and \ref{H. for tube}. Since the GCS of $\Sigma$ can be taken transverse to the 
fiber of $\pi$, then the surface $\pi^{-1}(\gamma)$ is transverse to $\Sigma$. So
$\ds\pi^{-1}(\gamma)\cap \Sigma$ is a finite number of $C^2$-curves in $\Sigma$. Then by Lemma \ref{LIC}, there is a
sub-horseshoe $\Delta_1$ of $\Delta_{0}$ such that $\Delta_1\cap (\pi^{-1}(\gamma)\cap \Sigma)=\emptyset$, and $HD(\Delta_1)$ is close to $HD(\Delta_0)$, and therefore, $HD(\Delta_1)\sim 2$ (see paragraphs at the beginning in Subsection \ref{BMb}). 
Thus, (using $\Delta_1$ instead of $\Delta_0$) we can assume, without loss of generality, that  the support of 
$g_{w}$, $\text{supp}\, g_{w}:=\overline{\{x\in M: g(x)\neq g_{w}(x)\}}$ does not intersect the geodesic $\gamma$, which  implies that
$\gamma$ is still a geodesic ray of $g_{w}$.

Denote by $b_{\gamma}^{w}$ the Busemann function associated to the ray $\gamma$ with the metric $g_{w}$ and $\phi_w$ the derivative of the geodesic
flow of the metric $g_w$. Is easy to see that all Lemmas of Subsection \ref{BMb} hold for the function $\overline{\max}_{\phi_w}b_{\gamma}^{w}$. In particular: 
\begin{C}\label{C-FINITE}
The set of maximum points $M_{\overline{\max}_{\phi_w}b_{\gamma}^{w}}(\Delta_0^{w})$ consists of isolated points and therefore is finite.
\end{C}

\begin{R}\label{R-FINAL}By the robustness of the property $V$ \emph{({cf. Section \ref{PropV} and \cite[Appendix 5.3]{MY1}})} we can find a small neighborhood, $\mathcal{G}\subset${\boldmath$\mathcal{G}$}$^{3}(M)$, of $g_w$ such that 
\begin{itemize}
\item[\emph{(a)}] The pair $(\cR^g, \Delta_{0,g}^w)$ has the property $V$, where $\Delta_{0,g}^{w}$ is the hyperbolic continuation of $\Delta_{0}^{w}$ by $\cR^g$ \emph{(see Subsection \ref{Dim. Red})}.
\item[\emph{(b)}] Let $\gamma_{_{g}}\in \mathcal{G}$ a ray of $M$ with the metric $g$, which represents the same \emph{end} of $\gamma$, then we can assume $($by the same argument before \emph{Corollary \ref{C-FINITE}}\emph{)} that $\Delta_{0,g}^w\cap \pi^{-1}(\gamma_{_{g}})=\emptyset$.
\item[\emph{(c)}] The Busemann function $b_{\gamma_{_{g}}}$ satisfies all the results of \emph{Subsection \ref{D. of B. F.} and Subsection  \ref{H. for tube}}.
\end{itemize}
\end{R}


\begin{proof}[\bf{Proof of Theorem \ref{T3}}]
\noindent Consider the set
$$H_{\mathcal{R}_{0}^{w}}^{\infty}=\{f\in C^{1}(\Sigma_0,\re):\#M_{f}(\Delta_0^{w})<\infty \ \ \text{and} \ \ Df_{z}(e^{j}_z)\neq 0,  \ \ z\in M_{f}(\Delta_0^{w}), \ \, j=s,u\},$$
where $e^{j}_{z}\in E^{j}_{\Sigma}(z)$ is a unit vector, $j=s,u$.\\
By Lemma \ref{L1-BMb} and Corollary \ref{C2-BMb}, we have that 
$\overline{\max}_{\phi_w}b_{\gamma}^{w}\in H_{\mathcal{R}_{0}^{w}}^{\infty}$. Moreover, the pair $(\mathcal{R}^{w},\Delta_0^{w})$ has the property $V$, then by the Main Theorem in \cite{MR} (see also Corollary \ref{C-MTMR}) we have that the sets $$\mathbb{L}(\overline{\max}_{\phi_w}b_{\gamma}^{w},\Delta_0^{w})=\bigg\{\ds\limsup_{n\to \infty}\overline{\max}_{\phi_w}b_{\gamma}^{w}((\mathcal{R}_{0}^{w})^{n}(x)):x\in \Delta_{0}^{w}\bigg\}$$ and 
$$\mathbb{M}(\overline{\max}_{\phi_w}b_{\gamma}^{w},\Delta_0^{w})=\bigg\{\ds\sup_{n\in \mathbb{Z}}\overline{\max}_{\phi_w}b_{\gamma}^{w}((\mathcal{R}_{0}^{w})^{n}(x)):x\in \Delta_{0}^{w}\bigg\}$$
have nonempty interior. Moreover, since $$\mathbb{L}(\overline{\max}_{\phi_w}b_{\gamma}^{w},\Delta_0^{w})\subset L(b_{\gamma}^{w}, \phi_{w}) \ \ \text{and} \ \ \mathbb{M}(\overline{\max}_{\phi_w}b_{\gamma}^{w},\Delta_0^{w})\subset \mathbb{M}(b_{\gamma}^{w}, \phi_{w}),$$
and  height function $h^{w}_{\gamma}=-b_{\gamma}^{w}$, then 
$$\text{int}\, \mathbb{M}(h_{\gamma}^{w}, \phi_{w})\neq \emptyset \ \ \text{and} \ \ \text{int}\,\mathbb{L}(h_{\gamma}^{w}, \phi_{w})\neq \emptyset.$$
Analogously, by Remark \ref{R-FINAL}, we have that for all $g\in \mathcal{G}$
$$\text{int}\, \mathbb{M}(h_{\gamma_{_{g}}}, \phi_{g})\neq \emptyset \ \ \text{and} \ \ \text{int}\,\mathbb{L}(h_{\gamma_{_{g}}}, \phi_{g})\neq \emptyset,$$
which completes the proof.
\end{proof}

\ \\
\newpage
\ \\
\appendix
\section{Appendix}\label{sec RCSEMAH}
\subsection{Stable and Unstable Manifold}\label{HF}

{
The Stable and Unstable Manifold Theorem \cite{K} implies that, if $\Lambda$ is a hyperbolic set for a flow $\phi^t$, then there is $\epsilon>0$ such that for every $x\in \Lambda$ the set 
$$W^{s}_{\epsilon}(x)=\{y: d(\phi^{t}(x),\phi^{t}(y))\leq \epsilon \ \text{and} \ d(\phi^{t}(x),\phi^{t}(y)) \to 0 \,\,\ \text{as}\,\, t \to +\infty\}$$
and 
$$W^{u}_{\epsilon}(x)=\{y: d(\phi^{t}(x),\phi^{t}(y))\leq \epsilon \ \text{and} \ d(\phi^{t}(x),\phi^{t}(y)) \to 0 \,\,\ \text{as}\,\, t \to -\infty\}$$
are invariant $C^{r}$-manifolds tangent to $E^{s}_{x}$ and $E^{u}_{x}$ respectively at $x$. Then we call
$W^{s}_{\epsilon}(x)$ the local \emph{strong-stable manifold}  and $W^{u}_{\epsilon}(x)$ the local \emph{strong-unstable manifold}, sometimes denoted by $W^{s}_{loc}(x)$ and $W^{u}_{loc}(x)$, respectively. Here $d$ is the distance on $M$ induced by the Riemannian metric. Moreover, the manifolds $W^{s}_{\epsilon}(x)$ and $W^{u}_{\epsilon}(x)$ vary continuously with $x$.
Also, if $x\in \Lambda$ one has that 
$$W^{s}(x)=\bigcup_{t\geq 0}\phi^{-t}(W^{s}_{\epsilon}(\phi^{t}(x))) \ \ \text{and} \ \ W^{u}(x)=\bigcup_{t\leq 0}\phi^{-t}W^{u}_{\epsilon}(\phi^{t}(x))$$
are $C^r$ invariant manifolds immerse in $M$, called of \emph{strong-stable manifold} and \emph{strong-unstable manifold} of $x$, respectively. Finally, the sets 
$$W^{cs}(x)=\bigcup_{t\in\re}W^{s}(\phi^{t}(x)) \ \ \text{and} \ \ W^{cu}(x)=\bigcup_{t\in\re}W^{u}(\phi^{t}(x))$$
are invariant $C^{r}$ manifolds tangent to $E^{s}_{x}\oplus \phi(x)$ and $E^{u}\oplus \phi(x)$, respectively.
}

\subsection{Regular Cantor Sets}\label{RCS}
To keep the notation in the text, we rewrote the definition of regular Cantor sets found in \cite[Section 1.1]{MY}, but an alternative definition can be found in \cite[Chapter 4]{PT}.\\
Let $\mathbb{A}$ be a finite alphabet, $\mathbb{B}$ a subset of $\mathbb{A}^{2}$, and $\Sigma_{\mathbb{B}}$ the subshift of finite type of
$\mathbb{A}^{\mathbb{Z}}$ with allowed transitions $\mathbb{B}$. We will always assume that $\Sigma_{\mathbb{B}}$ is topologically mixing
and that every letter in $\A$ occurs in $\Sigma_{\mathbb{B}}$.

\indent An {\it expansive map of type\/} $\Sigma_{\mathbb{B}}$ is a map $g$ with the following properties:
\begin{itemize}
\item[(i)] the domain of $g$ is a disjoint union
$\ds\bigcup_{\mathbb{B}}I(a,b)$. Where for each $(a,b)$,\,\, $I(a,b)$ is a compact subinterval of $I(a) := [0,1]\times\{a\}$;
\item[(ii)] for each $(a,b) \in \mathbb{B}$, the restriction of $g$ to $I(a,b)$ is a smooth diffeomorphism onto $I(b)$
satisfying $|Dg(t)| > 1$ for all $t$.
\end{itemize} 
\noindent The {\it regular Cantor set\/} associated with $g$ is the maximal invariant set
$$K = \bigcap_{n\ge0} g^{-n}\bigg(\bigcup_{\B} I(a,b)\bigg).$$
\noindent Let $\Sigma^+_{\mathbb{B}}$ be the unilateral subshift associated with $\Sigma_{\mathbb{B}}$. There exists a unique homeomorphism $h\colon \Sigma^{+}_{\B} \to K$ such that
$$
h(\underline{a}) \in I(a_0), \text{ for } \underline{a} = (a_0,a_1,\dots) \in \Sigma^+_{\mathbb{B}}
\ \ and \ \ 
h\circ\sigma =g \circ h,$$
where $\sigma^{+}\colon \Sigma_{\B}^{+} \to \Sigma_{\B}^{+}$, is defined as  $\sigma^{+}((a_{n})_{n\geq 0})=(a_{n+1})_{n\geq0}$.

\subsection {Markov Partitions and Stable and Unstable Cantor Sets}\label{sec EMAH}
Some good references in this section are the books \cite{PT} and 
\cite[Section 3.2.3]{OurBook}.
\subsubsection{Markov Partition}\label{MarPar}
\begin{Defi}
Let $\Lambda$ be a horseshoe associated to a $C^{2}$-diffeomorphism $\varphi$ on a surface $M$. A Markov partition $R$ for $\Lambda$ is a finite collection of compact sets $\{R_1, R_2, \dots, R_k\}$ $($all (diffeomorphic to a square$)$ such that:
\begin{enumerate}
\item[\emph{1.}] $ \Lambda \subset \bigcup_{i=1}^{k}R_i$,
\item[\emph{2.}] $\emph{int}R_i\cap \emph{int}R_j= \emptyset$, $i\neq j$, 
\item[\emph{3.}] If $x\in \emph{int}R_i$ and $\varphi(x)\in \emph{int}R_j$, then  $W^u(\varphi(x))\cap R_j\subset \varphi(W^u(x)\cap R_i)$ and \\
$\varphi(W^s(x)\cap R_i)\subset W^s(\varphi(x))\cap R_j$,
\item[\emph{4.}] There is a positive integer $n$ such that $\varphi^{n}(R_i)\cap R_j\neq \emptyset$, for all $1\leq i, j\leq k$ (mixing property).
\end{enumerate}

\end{Defi}
Markov partitions play an important role in dynamical systems as they allow us to encode dynamics. Moreover, in this case, the diameter of $R_i's$ can be chosen arbitrarily small (cf. \cite[Theorem 2 at Appendix]{PT}).  

The dynamical on a horseshoe $\Lambda$ can be codified using a \emph{matrix of transition} associated to a Markov partition $R$ of $\Lambda$. In fact, the matrix of transition $A=(a_{ij})$ is defined by 

\[ a_{ij}= \left\{ \begin{array}{lll}
          1 \, \, \, \,\,\text{if}\, \, \varphi(R_i)\cap R_j \neq \emptyset; & \\
          \, & \\
          0 \, \, \, \,  \text{otherwise}.
       \end{array} \right.\]
       
Consider the space of symbols $\Xi = \{1, \dots, k\}^{\mathbb{Z}}$. We say that a finite word $(x_1, x_2, \dots, x_n)$ in $\Xi$ is admissible if $a_{x_{i}x_{i+1}}=1$, for $i=1, 2, \dots, n-1$. The admissible words are important since it help to localize the dynamic on the Markov partition. More generally, we consider the space of bi-infinite admissible words $\Xi_{A}=\{(x_n)\in \Xi: a_{x_{n}x_{n+1}}=1\}$. It is not difficult to show that the $\varphi|_{\Lambda}$ and the shift $\sigma \colon \Xi_{A} \to \Xi_{A}$ are conjugated, \emph{i.e.}, there is a homeomorphism $h\colon \Lambda \to \Xi_{A}$ such that 
$$h\circ \varphi = \Sigma\circ h,$$
where $\sigma(x_n)=(x_{n+1})$.

\subsubsection{Stable and Unstable Cantor Sets}

\noindent If $\Lambda$ is a horseshoe associated to a $C^{2}$-diffeomorphism $\varphi$ on a surface $M$ and consider a finite collection 
$(R_{a})_{a\in\mathbb{A}}$ of disjoint rectangles of $M$, which are a Markov partition of $\Lambda$ (cf. \cite[Appendix 2]{PT} for more details). Define the sets 
 $$W^{s}(\Lambda,R)=\bigcap_{n\geq0}\varphi^{-n}(\bigcup_{a\in \mathbb{A}}R_{a}),$$
$$W^{u}(\Lambda,R)=\bigcap_{n\leq0}\varphi^{-n}(\bigcup_{a\in \mathbb{A}}R_{a}).$$
There is a $r>1$ and a collection of $C^{r}$-submersions $(\pi_{a}:R_{a}\rightarrow I(a))_{a\in\mathbb{A}}$, satisfying the following property:\\
\ \\
If $z,z^{\prime}\in R_{a_{0}}\cap \varphi^{-1}(R_{a_{1}})$ and $\pi_{a_{0}}(z)=\pi_{a_{0}}(z^{\prime})$, then we have $$\pi_{a_{1}}(\varphi(z))=\pi_{a_{1}}(\varphi(z^{\prime})).$$

\noindent In particular, the connected components of $W^{s}(\Lambda,R)\cap R_{a}$ are the level lines of $\pi_{a}$. Then we define a mapping $g^{u}$ of class $C^{r}$ (expansive of type $\Sigma_{\mathbb{B}}$) by the formula
$$g^{u}(\pi_{a_{0}}(z))=\pi_{a_{1}}(\varphi(z))$$
\noindent for $(a_{0},a_{1})\in \B$, $z\in R_{a_{0}}\cap\varphi^{-1}(R_{a_{1}})$.
The regular Cantor set $K^{u}$ defined by $g^{u}$, describes the geometry transverse of the stable foliation $W^{s}(\Lambda,R)$.
Analogously, we can describe the geometry  transverse of the unstable foliation $W^{u}(\Lambda,R)$, using a regular Cantor set $K^{s}$ defined by a mapping $g^{s}$ of class $C^{r}$ (expansive of type $\Sigma_{\B}$).\\

\noindent Also, the horseshoe $\Lambda$ is locally the product of two regular Cantor sets $K^{s}$ and $K^{u}$. So, the Hausdorff dimension of $\Lambda$, $HD(\Lambda)$ is equal to
$HD(K^{s}\times K^{u})$, but for regular Cantor sets, we have that $HD(K^{s}\times K^{u})=HD(K^{s})+HD(K^{u})$. Thus 
$HD(\Lambda)=HD(K^{s})+HD(K^{u})$ (cf. \cite[Ch. 4]{PT}).

\subsection{Geometry of $TM$ and $SM$}\label{sec Geo SM}

\noindent The following two subsections can be found in \cite{P}:
\subsubsection{Vertical and horizontal subbundles}\label{Sasaki}
Let $\pi\colon TM \to M$ the canonical projection, \emph{i.e.}, if $\theta=(x,v)\in TM$, then $\pi(\theta)=x$.\\
The vertical subbundle is defined by $V(\theta)=ker(d\pi_{\theta})$. It is easy to see that $\phi$ is transverse to the fibers $\pi^{-1}(\cdot)$.\\ 
Denote by  $TTM$ the tangent bundle of $TM$. The connection map $$K\colon TTM\to TM,$$
is defined as follows. Let $ \xi\in T_{\theta}TM$ and $\rho \colon (-\epsilon, \epsilon) \to TM$ be a curve adapted to $\xi$, that
is, with initial conditions as follows:
$$\left\{\begin{array}{rc}
\rho(0)=\theta;&\mbox{  \ } \\ 
\rho'(0)=\xi. &\mbox{ \  } \\ 
\end{array}\right.
$$

\noindent Such a curve gives rise to a curve $\nu\colon (-\epsilon , \epsilon) \to M$, $\nu:=\pi\circ \rho$, and a vector field $\Upsilon$ along to $\nu$, equivalently, $\rho(t) = (\nu(t), \Upsilon(t))$.
Define

$$K_{\theta}(\xi)=(\nabla_{\nu}\Upsilon)(0),$$ 
where $\nabla$ is the Levi-Civita connection, and $TTM$ is the tangent bundle of $TM$.\\
The horizontal subbundle is the subbundle of $TTM$ whose fiber at $\theta$ is given by
$H(\theta)=ker K_{\theta}$. The vertical and the horizontal subbundle hold that 
$$T_{\theta}TM=H(\theta)\oplus V(\theta),$$
and that the map $j_{\theta}\colon T_{\theta}TM  \to T_{x}M \times T_{x}M$ given by
$$j_{\theta}(\xi)=(d\pi_{\theta}(\xi),K_{\theta}(\xi)),$$
is a linear isomorphism.\\
By writing $\xi=(\xi_h,\xi_v)$,  where $\xi_h=d\pi_{\theta}(\xi)$ and $\xi_v=K_{\theta}(\xi)$, we identify $\xi$ with
$j_{\theta}(\xi)$.

Using the decomposition $T_{\theta}TM=H(\theta)\oplus V(\theta)$, we can define
naturally a Riemannian metric on $TM$ that makes $H(\theta)$ and $V(\theta)$
orthogonal. This metric is called the \emph{Sasaki metric} and is given by
$${\left\langle \left\langle \xi,\eta \right\rangle\right\rangle}_{\theta}={\left\langle d\pi_{\theta}(\xi),d\pi_{\theta}(\eta)\right\rangle}_{\pi(\theta)}+{\left\langle K_{\theta}(\xi),K_{\theta}(\eta)\right\rangle}_{\pi(\theta)}.$$

\noindent The one-form $\alpha$ of $TM$ is defined by 
$$\alpha_{\theta}(\xi)=\left\langle\left\langle \xi , \phi(\theta)\right\rangle\right\rangle={\left\langle d\pi_{\theta}(\xi) ,v \right\rangle}_{x},$$
such that $\alpha$ restricted to $SM$ (the unit tangent bundle) it becomes a contact form whose characteristic flow is the geodesic flow restricted
to $SM$.

\subsubsection{Jacobi Fields and the Differential of the Geodesic Flow}
In this section, for $\theta \in SM$, we shall describe an isomorphism between the tangent space $T_{\theta}TM$
and the Jacobi fields along the geodesic $\gamma_{\theta}$. Using the decomposition of $T_{\theta}TM$ in
vertical and horizontal subspaces, we shall give a very simple expression for the
differential of the geodesic flow in terms of Jacobi fields.
Recall that a Jacobi vector field along the geodesic $\gamma_{\theta}$ is a vector field along
$\gamma_{\theta}$ that is obtained as the variational vector field of a variation of $\gamma_{\theta}$ through geodesics. It is well known that $J$ is a Jacobi vector field along $\gamma_{\theta}$ if and only it
satisfies the \textit{Jacobi equation}
$$J''+R(\gamma_{\theta}',J)\gamma_{\theta}'=0,$$
where $R$ is the Riemann curvature tensor of $M$ and  $\ '$ denotes covariant derivatives along $\gamma_{y}$.

\noindent Let $\xi \in T_{\theta} TM$ and $\rho \colon  (-\epsilon, \epsilon) \to TM$ be an adapted curve to $\xi$. Then the
map $(s, t) \to \pi\circ\phi^{t}(\rho(s))$ gives rise to a variation of $\gamma_{\theta}=\pi\circ\phi^{t}(\theta)$. The curves
$t \to \pi\circ\phi^{t}(\rho(s))$ are geodesics and, therefore, the corresponding variational vector
field $J_{\xi}(t)=\frac{\partial}{\partial s}|_{s=0}\pi\circ\phi^{t}(\rho(s))$ is a Jacobi vector field with initial conditions
given by
$$\left\{\begin{array}{ll}
J_{\xi}(0)=&\frac{\partial}{\partial s}|_{s=0}\pi\circ\phi^{t}(\rho(s))|_{t=0}=d\pi_{\theta}(\xi);\mbox{  \ } \\ 
J'_{\xi}(0)=&\frac{\partial}{\partial t}|_{t=0}\frac{\partial}{\partial s}|_{s=0}\pi\circ\phi^{t}(\rho(s)) \mbox{ \  } \\ 
 {\ \ \ \ \ \ \ } =& \frac{\partial}{\partial s}|_{s=0}\frac{\partial}{\partial t}|_{t=0}\pi\circ\phi^{t}(\rho(s))=\frac{\partial}{\partial s}|_{s=0} \Upsilon(s)=K_{\theta}(\xi).\\
\end{array}\right.
$$
Using the above representation, we can describe the differential of the geodesic flow in terms of
Jacobi fields and the splitting of $T_{\theta}TM$ into horizontal and vertical subbundles we have 

\noindent \textbf{Claim:} Given $\theta\in TM$, $\xi\in T_{\theta}TM$, and $t\in \re$, we have
$$d\phi^{t}_{\theta}(\xi)=(J_{\xi}(t),J'_{\xi}(t)).$$

The following Lemma can be found in (cf. \cite[p. 42]{P}) for the compact case. For the non-compact case, the proof still holds with some adaptations.

\begin{Le}\label{Lemma 3 ad}
Let $X \subset SM$ be a hyperbolic set. Then for any $\theta\in X$ $$E^{s}(\theta)\oplus E^{u}(\theta) = \emph{ker} \alpha_{\theta}.$$ 
\end{Le}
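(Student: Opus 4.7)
The plan is to use the fact that the one-form $\alpha$ is invariant under the geodesic flow together with the exponential contraction and expansion along the hyperbolic subbundles, and then conclude by a dimension count.

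First I would recall that $\alpha$ is the canonical contact form of $SM$ whose Reeb vector field is precisely $\phi$, so that $(\phi^t)^{*}\alpha=\alpha$ for every $t\in\re$ (this is standard and follows from the fact that the geodesic flow is the characteristic flow of $\alpha$; alternatively it can be checked by direct computation using $\alpha_y(\xi)=\langle d\pi_y(\xi),v\rangle$ and the Jacobi field description of $d\phi^t$). Note also that $\alpha_y(\phi(y))=\langle v,v\rangle=1\neq 0$, so $\phi(y)\notin \ker\alpha_y$, and hence $\ker\alpha_y$ is a codimension-one subspace of $T_y SM$ transverse to the flow direction.

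Next I would show the two inclusions $E^{ss}(y)\subset \ker\alpha_y$ and $E^{uu}(y)\subset\ker\alpha_y$. Fix $\xi\in E^{ss}(y)$. By invariance of $\alpha$ under $\phi^t$ we have
\[
\alpha_y(\xi)=\alpha_{\phi^t(y)}\bigl(d\phi^t_y(\xi)\bigr)
\]
for every $t\geq 0$. Since $y$ lies in a hyperbolic set we have $\|d\phi^t_y(\xi)\|\leq K e^{-\lambda t}\|\xi\|\to 0$ as $t\to+\infty$, while the Sasaki norm of $\alpha$ is uniformly bounded on any compact neighbourhood of the hyperbolic set. Hence the right-hand side tends to $0$, forcing $\alpha_y(\xi)=0$. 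The argument for $\xi\in E^{uu}(y)$ is identical, running $t\to-\infty$ and using the corresponding contraction of $d\phi^{-t}$ on $E^{uu}$.

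Finally I would conclude by a dimension count. We have shown $E^{ss}(y)\oplus E^{uu}(y)\subset \ker\alpha_y$. The sum on the left is direct because, in the hyperbolic splitting $T_ySM = E^{ss}(y)\oplus\langle\phi(y)\rangle\oplus E^{uu}(y)$, the three summands are linearly independent. In the situation of the paper ($M$ is a surface, so $\dim SM=3$) we have $\dim E^{ss}(y)=\dim E^{uu}(y)=1$, so $\dim(E^{ss}(y)\oplus E^{uu}(y))=2=\dim \ker\alpha_y$, giving the desired equality. (In arbitrary dimension the same count works: both subspaces have codimension one in $T_ySM$, they are transverse to $\phi(y)$, and the inclusion is strict only if it is an equality.)

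The one step that requires a little care is the invariance of $\alpha$ under the geodesic flow; this is the only non-formal ingredient, but it is a classical property of the Liouville/contact structure on $SM$ and can be taken as known, so I do not expect any real obstacle in carrying the proof out.
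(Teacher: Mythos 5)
Your proof is correct and follows essentially the same route as the paper's: invariance of $\alpha$ under $\phi^t$, exponential decay of $d\phi^t_y$ on $E^{ss}$ (resp.\ $E^{uu}$ as $t\to-\infty$), and a dimension count. One small refinement: since the hyperbolic set $X$ need not be compact here, rather than appealing to boundedness of $\alpha$ on a compact neighbourhood it is cleaner (and is exactly what the paper's estimate $|\alpha_y(\eta)|\le \|J_\eta(t)\|$ amounts to) to note that the operator norm of $\alpha_z$ is $\le 1$ \emph{globally} in the Sasaki metric, because $|\alpha_z(\xi)|=|\langle d\pi_z(\xi),v\rangle|\le \|d\pi_z(\xi)\|\cdot\|v\|\le \|\xi\|$ for every $z=(x,v)\in SM$.
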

\begin{proof}
Let us show that $E^{s}(\theta)\subset \text{ker} \alpha_{\theta}$. For $E^{u}(\theta)$ the proof is analogous. Since $\phi^{t}$ preserves the contact form $\alpha$, then given
$\eta\in E^{s}(\theta)$, we have
\begin{eqnarray*}
\alpha_{\theta}(\eta)&=&\alpha_{\phi^{t}(\theta)}(d\phi^{t}_{\theta}(\eta))\\
&=&\left\langle d \pi_{\phi^{t}(\theta)}(d\phi^{t}_{\theta}(\eta)),\gamma'_{\theta}(t)\right\rangle=\left\langle d(\pi\circ \phi^{t})_{\theta}(\eta), \gamma'_{\theta}(t)\right\rangle\\
&=&\left\langle J_{\eta}(t),\gamma'_{\theta}(t)\right\rangle.
\end{eqnarray*}
Since $\left\|d\phi^{t}_{\theta}(\eta)\right\|^{2}=\left\|J_{\eta}(t)\right\|^{2}+\left\|J'_{\eta}(t)\right\|^{2}$ and
$\left\|d\phi^{t}_{\theta}(\eta)\right\|\to 0$ when $t\to \infty$, then\break $\left\|J_{\eta}(t)\right\|\to 0$ when $t \to \infty$. Also,
$\left|\alpha_{\theta}(\eta)\right|\leq\left\|J_{\eta}(t)\right\|$, so we have that $\alpha_{\theta}(\eta)=0$ showing that
$E^{s}(\theta)\subset \text{ker} \alpha_{\theta}$. Since $E^{s}(\theta)\oplus E^{u}(\theta)$ and $\text{ker} \alpha_{\theta}$ have the same dimensions, then we conclude that
$E^{s}(\theta)\oplus E^{u}(\theta)=\text{ker} \alpha_{\theta}$.
\end{proof}
\subsubsection{Stable and Unstable Jacobi Fields}\label{stable and unstable JF}
Let $\theta=(x,v)$ and $w$ orthogonal to $v$ and let $J_{w}^{T}(t)$ be the unique Jacobi filed on $\gamma_{\theta}(t)$ such that 
$$J_{w}^{T}(0)=w \ \ \text{and} \ \ J_{w}^{T}(T)=0.$$ 
The limit $J_{w}^{s}(t):=\ds \lim_{T\to \infty}J_{w}^{T}(t)$ exists and is a Jacobi vector field on $\gamma_{\theta}(t)$ (cf. \cite{Eb}). Clearly,
$J_{w}^{s}(0)=w$ and $J_{w}^{s}(t)\neq 0$ for all $t>0$. We call $J_{w}^{s}(t)$  \textit{the stable Jacobi field}.\\
The \textit{unstable Jacobi field} $J_{w}^{u}(t)$ along $\gamma_{v}(t)$ is obtained  by considering taking the limit as $T\to -\infty$, \emph{i.e.},
$$J_{w}^{u}(t):=\ds \lim_{T\to-\infty}J_{w}^{T}(t).$$
\noindent The subspaces (using the identification $T_{\theta}SM=H(\theta)\oplus V(\theta)$)
\begin{eqnarray*}
E^{s}({\phi^{t}(\theta)})&=&\{(J(t),J'(t))\in T_{\phi^{t}(\theta)}SM| J \ \text{is a stable Jacobi field}\},\\
E^{u}({\phi^{t}(\theta)})&=&\{(J(t),J'(t))\in T_{\phi^{t}(\theta)}SM| J \ \text{is an unstable Jacobi field}\}
\end{eqnarray*}
are called the \emph{Green subbundles on $\gamma_{y}$}, which are also the stable and unstable subbundles of the definition of hyperbolicity of the geodesic flow on $SM$ (cf. \cite{Eb}).  

In the pinched negative condition of the curvature ($-a^2\leq K_M\leq-b^2<0$), we have that the Jacobi fields that vanish at some point are always divergent, in other words, can not be bounded (\cite{Eb}).

\begin{Le}\label{Lemma 1 ad}
If $w\in T_{\theta}SM\setminus\{0\}$ is a vertical vector, then $w\notin E^{s}(\theta)\oplus\phi(\theta)$ and $w\notin E^{u}(\theta)\oplus\phi(\theta)$, where  $E^{s}(\theta)$, $E^{u}(\theta)$ are the stable and unstable space, respectively. 
 \end{Le}
\begin{proof}
From Lemma \ref{Lemma 3 ad} we have  $E^{s}(\theta)\oplus E^{u}(\theta)=ker\alpha_{\theta}$. By contradiction, 
assume that $w= \alpha \xi^{s}+\beta\phi(\theta)$ with $\xi^{s}\in E^{s}(\theta)$, then 
$$0=\left\langle d\pi_{\theta}(w),v\right\rangle=\left\langle \alpha d\pi_{\theta}(\xi^{s})+\beta d\pi_{\theta}(\phi(\theta)),v\right\rangle=\alpha \left\langle d\pi_{\theta}(\xi^{s}),v\right\rangle+\beta \left\langle v,v\right\rangle=\beta.$$
Therefore, $\xi^{s}$ is a vertical vector, but 
$$E^{s}(\theta)=\{(J_{s}(0), J'_{s}(0))\in H(\theta)\oplus V(\theta):J_{s} \ \text{is a stable Jacobi field}\}.$$ 
If $\xi^{s}=(J_{s}(0), J'_{s}(0))$, then $J_s(0)=0$, and hence, the condition on the curvature implies that $J_{s}$ is a divergent Jacobi filed, which is a contradiction since $J_s$ is bounded because it is a  stable Jacobi field. \\
The proof for $E^{u}(\theta)\oplus\phi(\theta)$ is analogous.
\end{proof}

\subsection{Generic Properties of Geodesic Flows}\label{Generic Properties}
\subsubsection{Klingerberg and Takens Theorem}\label{KTT}
Let $J^{k}_{s}(2n)$ be the set of $k$-jets of symplectic automorphisms
of $(\re^{2n},0)$ (with $\omega=\\ \sum dx_i \wedge dx_j$; let Q be an invariant subset
of $J^{k}_{s}(2n)$ (\emph{i.e.} for all $\sigma \in J^{k}_{s}(2n)$, $\sigma Q \sigma^{-1}=Q$).
Let $Q\subset J^{k}_{s}(2n)$ and $\gamma$ a closed geodesic for the metric $g$, we say that $\phi_{g}$ (the vector of the geodesic flow)
has the property $Q\subset J^{k}_{s}(2n)$ if, for some $x\in \gamma$ and some cross-section $\Sigma$ to $\phi_g$ in $x$, we have that the symplectic 
automorphism $DP_{x}\in Q$, where $P$ is  the Poincar\'e map associate to the section $\Sigma$, (we denote by $P_Q$  for specifying the 
property $Q$). It is important to note that this definition is not  dependent on the cross-section $\Sigma$ (cf. \cite{KT}).
\begin{T}[Klingerbeng-Takens]Let $Q\subset J_s^{k}(2n)$ be open dense and invariant. Then the following 
property $P_Q$ is $C^{k+1}$-generic, \emph{i.e.}, $\phi_g$  has property $P_Q$ if the Poincar\'e map of every closed geodesic of $g$ belongs to $Q$.
\end{T}
\subsubsection{The Birkhoff Invariant}\label{BI}
Let $f\colon (\re^2,0)\to (\re^2,0)$ be a germ of diffeomophism area-preserving (in dimension two is symplectic) and $0$ a hyperbolic fixed point with eigenvalues $\lambda$ and $\lambda^{-1}$, then the Birkhoff normal form (cf. \cite{Moser}) says that there is an area-preserving change of coordinates $\Phi$ such that $\Phi^{-1}\circ f \circ \Phi=N$, where 
$N(x,y)=(U(xy)x,U^{-1}(xy)y)$ and $U(xy)$ is a power series $\lambda+U_2xy+\cdots$ convergent in a neighborhood of $x=y=0$. In other words, in this coordinates $f$ 
can be written as
\begin{equation}\label{EBI}
f(x,y)=(\lambda x(1+axy+\mathcal{O}(\Vert(x,y)\Vert^{4})),\lambda^{-1} y(1-axy+\mathcal{O}(\Vert(x,y)\Vert^{4}))),
\end{equation}
where the constant $a$ is called the \textit{Birkhoff Invariant} of $f$.
\begin{Le}\label{Birkhoff Invariant}
The Birkhoff invariant for diffeomorphism area-preserving in $(\re^2,0)$ only depends of $3$-jets in $0$, $J^{3}(0)$. Moreover, the set of
diffeomorphism area-preserving in $(\re^2,0)$ such that the Birkhoff invariant is non-zero is open, dense, and invariant in $J^{3}(0)$.
\end{Le}
\begin{proof}
The proof of \cite[ Theorem 1 and  2]{Moser}, show  the first part of the lemma and also the opening.  For density, suppose that for some  $f\colon (\re^2,0)\to (\re^2,0)$, the Birkhoff invariant is zero, then for $\epsilon>0$ we consider the function $N_\epsilon(x,y):=(\lambda x(1+\mathcal{O}(\Vert(x,y)\Vert^{4})),\lambda^{-1} y(1+\mathcal{O}(\Vert(x,y)\Vert^{4})))+\epsilon(x^2y,-xy^2)$, then the function $f_{\epsilon}=\Phi\circ N_\epsilon \circ \Phi^{-1}$ is area-preserving diffeomorphism close to $f$ with the Birkhoff invariant is $\epsilon$. \\
Let $f$, $g$ be as above and suppose that the Birkhoff invariant for $f$ is non-zero, then $g^{-1}\circ f\circ g$ has the Birkhoff invariant non zero. Indeed, by the Birkhoff Normal Form \cite[Theorem 1]{Moser},  there is an area-preserving change of coordinates $\Phi$ such that $\Phi^{-1}\circ g^{-1}\circ f\circ g \circ \Phi$ has the form (\ref{EBI}), then $(g\circ \Phi)^{-1}\circ f \circ (g\circ \Phi)$ has the form (\ref{EBI}), in other words, there is another area-preserving change of coordinates 
$g\circ \Phi$ such that $f$ has the form (\ref{EBI}), but by the uniqueness of the Birkhoff normal form (see \cite[page 674]{Moser}), we have that the Birkhoff invariant of $g^{-1}\circ f\circ g$ is equal to the Birkhoff invariant of $f$, therefore non-zero.
\end{proof}

$$\bf{Acknowledgments}$$
The authors are thankful to Instituto de Matem\'atica Pura e Aplicada (IMPA) for the excellent academic environment during the preparation of
this manuscript. Also, we thank  Carlos Matheus, Gonzalo Contreras, and  Jairo Bochi   for helpful discussions about this work.  This work was
financially supported by CNPq-Brazil, Capes, and the Palis Balzan Prize. 

\bibliographystyle{alpha}	
\bibliography{Third-Paper}

\noindent \textbf{Carlos Gustavo T. de A. Moreira}\\
School of Mathematical Sciences - Nankai University \\
Tianjin 300071 - P. R. China 
and \\
Instituto de Matem\'atica Pura e Aplicada (IMPA)\\ 
Estrada Dona Castorina 110, cep 22460-320\\
Rio de Janeiro-Brasil \\
E-mail: gugu@impa.br
\ \\ 

\noindent \textbf{Sergio Augusto Roma\~na Ibarra}\\
Universidade Federal do Rio de Janeiro\\
Av. Athos da Silveira Ramos 149, Instituto de Matem\'atica, Centro de Tecnologia \ - Bloco C \ - Cidade Universit\'aria \  Ilha do Fund\~ao, cep 21941-909 \\
Rio de Janeiro-Brasil\\
E-mail: sergiori@im.ufrj.br\\

\end{document}